\newtheorem{theorem}{Theorem}[section]
\newtheorem{lemma}[theorem]{Lemma}
\newtheorem{cor}[theorem]{Corollary}
\newtheorem{question}[theorem]{Question}
\newtheorem{definition}[theorem]{Definition}
\newtheorem{example}[theorem]{Example}
\newtheorem{notation}[theorem]{Notation}
\newtheorem{proposition}[theorem]{Proposition}
\newtheorem{corollary}[theorem]{Corollary}
\newtheorem{remark}[theorem]{Remark}
\DeclareMathOperator{\Int}{int}
\newcommand\ball[2][]{\mathcal B_{#1}(#2)}
\newcommand{\R}{\mathbb{R}}
\newcommand{\eps}{\varepsilon}
\newcommand{\N}{\mathbb{N}}
\newcommand{\Z}{\mathbb{Z}}
\newcommand\rk\alpha
\newcommand\co[2]{\left\llbracket#1,#2\right\llbracket}
\newcommand\oo[2]{\left\rrbracket#1,#2\right\llbracket}
\newcommand\cc[2]{\left\llbracket#1,#2\right\rrbracket}
\newcommand\cl[1]{\overline{#1}}
\newcommand\restr[1]{_{|#1}}
\DeclareMathOperator{\diam}{diam}
\DeclareMathOperator{\Asym}{Asym}
\newcommand\orb[3][]{\mathcal O^{#1}_{#2}(#3)}
\newcommand{\resp}[1]{\ (resp. #1)}
\newcommand\A{\mathcal A}
\newcommand\B{\mathcal B}
\newcommand\lang[2][]{\mathcal L_{#1}(#2)}
\newcommand\len[1]{\left|#1\right|}
\newcommand{\norm}[1]{\parallel#1\parallel}
\newcommand\card[1]{\left|#1\right|}
\newcommand\pred[3][]{\mathcal P^{#1}_{#2}(#3)}
\newcommand\kk{\textbf k}
\newcommand{\spart}[1]{\left\lceil #1\right\rceil}
\newcommand{\ipart}[1]{\left\lfloor #1\right\rfloor}
\newcommand{\ifnv}[2]{\ifthenelse{\equal{#1}{}}{}{#2}}
\newcommand\sett[3][]{\left\{\left.#2\ifnv{#1}{\in #1}\vphantom{#3}\right|#3\right\}}
\newcommand\defeq{:=}
\newcommand{\pinf}[1]{{}^\infty #1}
\newcommand{\uinf}[1]{#1^\infty}
\newcommand{\dinf}[1]{{}^\infty #1^\infty}
\newcommand\ttau{{\boldsymbol\tau}}
\newcommand{\mirr}[1]{\overline{#1}}
\newcommand\zero{\textbf0}
\author{Silvère Gangloff}
\address[S. Gangloff]{AGH University of Krakow, Faculty of Applied
	Mathematics, al.
	Mickiewicza 30, 30-059 Krak\'ow, Poland
}
\email[S.~Gangloff]{sgangloff@agh.edu.pl}
\author{Pierre Guillon}
\address[P. Guillon]{Institut de Math\'ematiques de Marseille, Bureau 221, Campus de Luminy, Case 907, F-13288 Marseille cedex 9
}
\email[P.~Guillon]{pierre.guillon@math.cnrs.fr}
\author{Piotr Oprocha}
\address[P.~Oprocha]{AGH University of Krakow, Faculty of Applied
	Mathematics, al.
	Mickiewicza 30, 30-059 Krak\'ow, Poland
	-- and --
	Centre of Excellence IT4Innovations - Institute for Research and Applications of Fuzzy Modeling, University of Ostrava, 30. dubna 22, 701 03 Ostrava 1, Czech Republic.}
\email[P.~Oprocha]{oprocha@agh.edu.pl}
\title{Various questions around finitely positively expansive dynamical systems}
\begin{document}
	\maketitle
        \tableofcontents

        \begin{abstract}
It is well-known that when a positively expansive dynamical system is invertible then its underlying space is finite. C.Morales has introduced a decade ago a natural way to generalize positive expansiveness, by introducing other properties that he called positive $n$-expansiveness, for all $n \ge 1$, positive $1$-expansiveness being identical to positive expansiveness. Contrary to positive expansiveness, positive $n$-expansiveness for  $n>1$ does not enforce that the space is finite when the system is invertible. In the present paper we call finitely positively expansive dynamical systems as the ones which are positively $n$-expansive for some integer $n$, and prove several results on this class of systems. 
In particular, the well-known result quoted above is true if we add the constraint of shadowing property, while it is not if this property is replaced with minimality. Furthermore, finitely positively expansive systems cannot occur on certain topological spaces such as the interval, when the system is assumed to be invertible finite positive expansiveness implies zero topological entropy. Overall we show that the class of finitely positively expansive dynamical systems is quite rich and leave several questions open for further research.

\end{abstract}

\section{Introduction}

In this text, we study a class of properties of dynamical systems, introduced in 2012 by C.Morales~\cite{Morales}, which generalize 
positive expansiveness, called positive $n$-expansiveness, for $n \ge 1$, and is defined as follows.
For every $n \ge 1$ and  a metric space $(X,d)$, a dynamical system $(X,f)$ is called \textbf{positively $n$-expansive} with constant $\eps> 0$ when there exist some $\eps>0$ such that for every $x \in X$, $\card{W_{\eps}(x)}\le n$, where: 
\[W_{\eps}(x) \defeq \sett[X]y{\forall t\in\N, d\left( f^t(x),f^t(y)\right) < \eps}.\]
Let us notice that in particular the classical notion of positive expansiveness is identical to positive $1$-expansiveness.
A dynamical system is called \textbf{finitely positively expansive} when there exists some $n \ge 1$ and $\eps > 0$ such that it is positively $n$-expansive with constant $\eps$. In his article, C.Morales has proved that certain statements which are true for positively expansive dynamical systems 
are still true on finitely positively expansive systems while others are not anymore. It is the case of the following one: 

\begin{theorem} \label{theorem.pos.exp}
Any invertible positively expansive dynamical system is finite. 
\end{theorem}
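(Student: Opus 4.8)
The plan is to exploit invertibility in order to turn the \emph{forward} separation guaranteed by positive expansiveness into a \emph{uniform backward contraction}, and then to extract finiteness from a covering/counting argument. Throughout I assume, as is implicit in the notion of dynamical system used here, that $X$ is compact and that an invertible system means $f$ is a homeomorphism; I fix an expansiveness constant $\eps$, so that $W_\eps(x)=\{x\}$ for every $x\in X$.

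First I would record the standard \emph{uniform} reformulation of positive expansiveness. By a compactness argument on $X\times X$, the nested closed sets $K_n=\sett{(x,y)}{\forall 0\le i\le n,\ d(f^i(x),f^i(y))\le\eps}$ decrease to the diagonal, so for every $\delta>0$ there is $N$ such that $d(f^i(x),f^i(y))\le\eps$ for all $0\le i\le N$ forces $d(x,y)\le\delta$. The decisive step is then to promote this into a genuine contraction of $f^{-1}$. Here I would invoke the known equivalence (or reconstruct it by an adapted/Frink-type metrization) stating that a positively expansive map on a compact metric space is \emph{expanding} in some compatible metric $d'$: there are $\lambda>1$ and $c>0$ with $d'(x,y)\le c\Rightarrow d'(f(x),f(y))\ge\lambda\,d'(x,y)$. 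Since $f^{-1}$ is uniformly continuous, after shrinking $c$ to some $c'$ I obtain that $f^{-1}$ contracts $c'$-close pairs, namely $d'(x,y)\le c'\Rightarrow d'(f^{-1}(x),f^{-1}(y))\le\lambda^{-1}d'(x,y)\le c'$, and iterating (the pair stays $c'$-close) yields $\sup\sett{d'(f^{-k}(x),f^{-k}(y))}{d'(x,y)\le c'}\le\lambda^{-k}c'\to0$.

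Finiteness then falls out of a covering argument that crucially uses bijectivity. Cover $X$ by a fixed finite number $N$ of balls of radius $c'/2$ (hence of diameter at most $c'$). Because $f^{-k}$ is a bijection, the images of these balls again cover $X$, and by the previous estimate each image has diameter at most $\lambda^{-k}c'$. Thus for every $\eta>0$ the space $X$ is covered by at most $N$ sets of diameter $<\eta$; taking $\eta$ smaller than the least distance among any $N+1$ prescribed points forces $\card{X}\le N$, so $X$ is finite.

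I expect the genuine obstacle to be the middle step: producing, from positive expansiveness and compactness alone, the uniform backward contraction (equivalently, the expanding compatible metric $d'$). The forward uniform form of expansiveness does not by itself control the \emph{intermediate} backward iterates of a close pair, so some metrization input is what makes the argument work; the surrounding reduction and the final covering count are then routine.
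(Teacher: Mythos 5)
Your argument is correct, but there is nothing in the paper to compare it against: Theorem~\ref{theorem.pos.exp} is stated there as a classical, well-known fact and is never proved (the closest internal result is Theorem~\ref{theorem.shadowing}, which the authors obtain by an entirely different mechanism: zero entropy via Theorem~\ref{thm:entropy.zero.gen}, then shadowing plus Theorem~\ref{thm:sensitive.point} to get equicontinuity, then Corollary~\ref{c:equicontinuous}). What you reproduce is essentially the standard literature proof: Reddy's theorem that a positively expansive map of a compact metrizable space is expanding in some compatible metric $d'$, then uniform backward contraction of $f^{-1}$ on $c'$-close pairs (the iteration is legitimate because $\lambda^{-1}c'\le c'$ keeps the pair in the contraction regime), then the covering count, where bijectivity of $f^{-k}$ ensures the shrunken images still cover $X$ and pigeonhole gives $\card{X}\le N$. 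All three steps are sound, so as a proof-by-citation-of-Reddy this works; the only blemishes are cosmetic (you use $N$ both for the expansiveness rank and for the number of balls, and your first paragraph's uniform reformulation is never actually used once you invoke Reddy).

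One substantive remark: the step you flag as "the genuine obstacle" is less essential than you believe, and the metrization input can be avoided entirely. From your uniform reformulation, pick $\delta\le\eps$ such that $d(u,v)\le\delta$ implies $d(f^{-1}(u),f^{-1}(v))\le\eps$, and let $N$ be the corresponding rank, so that $d(f^i(x),f^i(y))\le\eps$ for $0\le i\le N$ forces $d(x,y)\le\delta$. Now suppose $d(f^{-i}(x),f^{-i}(y))\le\eps$ for $0\le i\le N$. Applying the uniform statement to the pair $(f^{-k}(x),f^{-k}(y))$ and inducting on $k\ge N$ shows that $d(f^{-k}(x),f^{-k}(y))\le\delta$, hence $d(f^{-(k+1)}(x),f^{-(k+1)}(y))\le\eps$; so the whole backward orbit stays $\eps$-close, and then, for every $\delta'>0$, the same argument gives $d(f^{-n}(x),f^{-n}(y))\le\delta'$ for all $n$ beyond the rank associated with $\delta'$. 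Choosing $c'$ by uniform continuity of $f^{-1},\dots,f^{-N}$ so that $d(x,y)\le c'$ implies the hypothesis above, your covering and pigeonhole argument then runs verbatim in the original metric, with the uniform backward convergence replacing the geometric rate $\lambda^{-k}c'$ (a rate is never needed, only uniformity). This yields a self-contained elementary proof in which your first paragraph, far from being idle, does all the work.
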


C.Morales proved that it is not true for finitely positively expansive systems by constructing explicit examples:

\begin{theorem}[Morales]
For each $k\ge 1$, there exists 
an infinite metric space $(X_k,d_k)$ on which there 
is a positively $2^k$-expansive dynamical system $(X_k,f_k)$ such that $f$ is a homeomorphism and which is not $(2^k-1)$-expansive.
\end{theorem}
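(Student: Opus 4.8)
The plan is to reduce the general case to the case $k=1$ by taking $k$-fold products, the exponent $2^k$ being precisely what a $k$-fold product of a positively $2$-expansive system produces. Throughout I would, for a homeomorphism $f$, write $\Gamma_\eps(x)\defeq\sett[X]y{\forall t\in\Z,\ d(f^t(x),f^t(y))<\eps}$ for the \emph{two-sided} indistinguishability set, so that ``not $(2^k-1)$-expansive'' means exactly that for every $\eps>0$ some point $x$ has $\card{\Gamma_\eps(x)}\ge 2^k$; note $\Gamma_\eps(x)\subseteq W_\eps(x)$, so the two-sided statement is genuinely stronger than the forward one. The only object I need to build is a single \emph{block} $(Y,g)$ that is an infinite metric space, a homeomorphism, positively $2$-expansive with some constant $\eps_0$, and not expansive (i.e.\ not $1$-expansive in the two-sided sense).

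Given such a block I would equip $Y^k$ with the max metric $d^{\times}(\mathbf x,\mathbf y)\defeq\max_{1\le i\le k}d(x_i,y_i)$ and set $f_k\defeq g^{\times k}$, a homeomorphism of the infinite space $X_k\defeq Y^k$. The key point is that for the max metric both indistinguishability sets factor as products: from $d^{\times}(f_k^t\mathbf x,f_k^t\mathbf y)<\eps \iff \forall i,\ d(g^t x_i,g^t y_i)<\eps$ one reads off $W_\eps(\mathbf x)=\prod_{i=1}^k W_\eps(x_i)$ and, using $t\in\Z$, $\Gamma_\eps(\mathbf x)=\prod_{i=1}^k\Gamma_\eps(x_i)$. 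Applying the first identity with $\eps=\eps_0$ gives $\card{W_{\eps_0}(\mathbf x)}=\prod_i\card{W_{\eps_0}(x_i)}\le 2^k$, so $(X_k,f_k)$ is positively $2^k$-expansive. For the second, I would fix $\eps>0$, use non-expansiveness of $(Y,g)$ to get $y_0$ with $\card{\Gamma_\eps(y_0)}\ge 2$, and take the diagonal point $\mathbf x=(y_0,\dots,y_0)$, for which $\card{\Gamma_\eps(\mathbf x)}=\card{\Gamma_\eps(y_0)}^k\ge 2^k>2^k-1$; as $\eps$ is arbitrary, $(X_k,f_k)$ is not $(2^k-1)$-expansive.

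To produce the block I would take $Y\defeq\sett{(j,0),(j,1/j)}{j\ge 1}\subseteq\R^2$ with the Euclidean metric and let $g\defeq\mathrm{id}_Y$, so that every point is fixed; $Y$ is infinite and $g$ is a homeomorphism. Distinct pairs lie at distance $\ge 1$ while the two points of the $j$-th pair lie at distance $1/j$, so the open ball of radius $\eps_0\defeq 1/2$ about any point meets at most one pair and hence holds at most two points; since all points are fixed, $W_{\eps_0}(x)=\{y:d(x,y)<\eps_0\}$, whence $\card{W_{\eps_0}(x)}\le 2$ and $(Y,g)$ is positively $2$-expansive. Because the orbits are constant we also have $\Gamma_\eps(x)=\{y:d(x,y)<\eps\}$, and given any $\eps>0$ choosing $j>1/\eps$ places $(j,1/j)$ in $\Gamma_\eps((j,0))$, so $\card{\Gamma_\eps((j,0))}\ge 2$ and $(Y,g)$ is not expansive. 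This verifies all four properties required of the block.

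The step I expect to be delicate is exactly the one the statement hinges on: ``$(2^k-1)$-expansive'' refers to the \emph{two-sided} sets $\Gamma_\eps$, which are smaller than the forward sets $W_\eps$, so a lower bound on $\card{W_\eps}$ does not by itself yield non-$(2^k-1)$-expansiveness. I would neutralise this by choosing a block in which $g$ fixes every point, where $\Gamma_\eps=W_\eps$ both reduce to metric balls, so the lower bound $\card{\Gamma_\eps}\ge 2$ is genuinely two-sided and not merely forward. The max-metric product then transports this two-sided lower bound upward multiplicatively through $\Gamma_\eps(\mathbf x)=\prod_i\Gamma_\eps(x_i)$ while simultaneously preserving the forward upper bound $\card{W_{\eps_0}}\le 2^k$, which is precisely the combination the theorem demands. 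A genuinely moving homeomorphism could replace the fixed-point block, but since the statement imposes no such requirement the fixed block is the most transparent choice, and the only things to check carefully are the two product identities and the separation estimate, neither of which poses real difficulty.
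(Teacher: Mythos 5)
Your product reduction is exactly the right mechanism: the exponent $2^k$ in Morales' statement does come from taking $k$-fold products of a single infinite positively $2$-expansive block (cf.\ Remark~\ref{remark:elem}(2)), your factorizations of both the forward sets $W_\eps$ and the two-sided sets under the max metric are correct, and you are right to read ``not $(2^k-1)$-expansive'' two-sidedly. Note that the paper gives no proof of this statement --- it quotes Morales, whose construction likewise proceeds by taking products of a base example --- so the comparison is with that construction, and there the agreement ends.

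The gap is your base block. The set $Y=\sett{(j,0),(j,1/j)}{j\ge1}$ is an unbounded, closed, discrete subset of $\R^2$, hence not compact, and compactness is the entire content of the theorem. In this paper a dynamical system is, by standing convention, a continuous self-map of a \emph{compact} metric space (see the beginning of Section~\ref{section.general}), and Theorem~\ref{theorem.pos.exp}, which Morales' theorem is designed to contrast with, is simply false without compactness: the identity on $\Z$ with the usual metric is an infinite, invertible, positively $1$-expansive system. Under your noncompact reading, then, the statement carries no force, and indeed your proof of it is essentially the triviality just described. Nor can the fixed-point trick be rescued by compactifying: an infinite compact metric space has an accumulation point $x_0$, and for the identity $W_\eps(x_0)\supseteq\ball[\eps]{x_0}\cap Y$ is infinite for every $\eps>0$; more structurally, the identity is equicontinuous, and by Corollary~\ref{c:equicontinuous}\eqref{i:eq} any finitely positively expansive equicontinuous system is finite. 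So the ``genuinely moving homeomorphism'' that you declare optional in your last paragraph is in fact forced: producing a compact infinite positively $2$-expansive homeomorphism that is not expansive --- which Morales achieves with an explicit planar example built from pairs of forward-asymptotic orbits, compatible with Corollary~\ref{cor:periodic} and Remark~\ref{rem:asy} --- is precisely the nontrivial step, and it is absent from your argument.
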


 During the last decade, the properties 
 defined by C.Morales have received a lot of attention and several formal statements have been proved. Most of them fall into two categories: \textbf{1.} Ones which recover Theorem~\ref{theorem.pos.exp} for systems under additional constraints. The proof relies on the fact that all the properties are equivalent under these constraints. For instance, C.Good, S.Macías, J. Meddaugh, J. Mitchell and J. Thomas~\cite{Good} 
proved that topological mixing, shadowing property and $n$-expansiveness imply together that 
the dynamical system is finite when assumed to be a homeomorphism. B. Carvalho and W. Cordeiro~\cite{CC19} proved a similar statement, where mixing is replaced by transitivity, generalizing the result of A.Artigue, M.J.Pacifico, J.Vieitez~\cite{APV17} that if a  homeomorphism defined on a compact surface is transitive and positively 2-expansive, then it is positively expansive. \textbf{2.} Ones which provide examples of dynamical systems which are homeomorphisms and positively $n$-expansive, for each $n \ge 1$, under various sets of constraints. For instance one can find in~\cite{CC16} an example of homeomorphism with the shadowing property which is $n$-expansive (not positively) and not $(n-1)$-expansive for each $n > 1$. This can be found as well in~\cite{Good}, without the homeomorphism condition. \bigskip

In the present text, we study finitely positively expansive dynamical systems and their properties. 
In Section~\ref{section.general}, we mainly prove two theorems about dynamical systems which are homeomorphisms with the finite positive expansiveness in a general setting: they have zero entropy; if such a system has the shadowing property, it is finite. In Section~\ref{section:continua} we consider dynamical systems on topological graphs. On the circle, there 
are known examples of positively expansive systems, which are the $z^n$ maps for $n \notin \{-1,0,1\}$. We prove that on the other hand, other topological graphs, the interval and the Hawaiian earring, do not have finitely positively expansive systems, for different reasons. In Section~\ref{section:shifts} we consider 
two-sided one-dimensional shifts. In this context, we have found that S-adic 
shifts provide several examples of finitely 
positively expansive dynamical systems. 
In particular, Sturmian and T\oe plitz
shifts are positively $2$-expansive. We provide and prove some sufficient conditions for a sequence of substitutions 
to generate a finitely positively expansive 
shift. Using these results, we are able to 
 strengthen Morales' result by proving that for each $n \ge 2$ there exists a \textbf{minimal} dynamical system which is positively $n$-expansive and not positively $(n-1)$-expansive. \bigskip

We believe that the main outcomes of this study are natural connections between the property of finite positive expansiveness and low complexity symbolic dynamics and dynamics on topological graphs, as witnessed by non trivial results formulated in Section~\ref{section:continua} and Section~\ref{section:shifts}. 
\bigskip 

In the whole text, we denote by $\N$ the set of non-negative integers and by $\Z_{+}$ the set of positive integers, and $\Z_{-}$ the set of negative integers.

\section{General properties of finitely positively expansive systems}\label{section.general}

In this section, we study finitely positively expansive systems in a general setting. In particular we prove that some properties of dynamical systems are `orthogonal' to finite positive expansiveness when $f$ is a homeomorphism: the shadowing property [Section~\ref{section.shadowing}], and positive entropy  [Section~\ref{section:entropy}]. \bigskip


In this section, $(X,d)$ is a compact metric space.
{
  We denote $\ball[\eps]x$ the ball of radius $\eps$ around $x\in X$.
  A \textbf{dynamical system} is a pair $(X,f)$, where $f$ is a continuous self-map of $X$.
}
We will denote by $\orb fx$ the set $\sett{f^k (x)}{k \in \N}$ and call it the \emph{orbit} of $x$ under the action of $f$.
When $f$ is a homeomorphism, we will also denote $\orb[\Z]fx\defeq\sett{f^t(x)}{t \in \Z}$.
When the context is unambiguous, we will simplify these notation into respectively $\orb{}x$ and $\orb[\Z]{}x$.
An $\boldsymbol{\omega}$-\textbf{limit set} of $(X,f)$ is a set of the form 
\[\omega_f(x)\defeq\bigcap_{n \ge 1} \overline{\bigcup_{k \ge n} \left\{f^k(x)\right\}},\]
where $x \in X$. We say that the dynamical system $(X,f)$ is \textbf{transitive} when for any open sets $u,v$ of $X$, there exists an integer $n>0$ such that $f^n(u)\cap v\neq \emptyset$.

\subsection{Elementary properties}





Let us notice first that the class of finitely positively expansive dynamical systems is stable by product, union, and subsystem: 

\begin{remark}\label{remark:elem}
For two dynamical systems 
which are, respectively, positively $n$-expansive and positively $m$-expansive, we have the following:
	\begin{enumerate}
		\item The disjoint union 
		of these dynamical systems is positively $\max(m,n)$-expansive;
		\item Their product 
		is positively $mn$-expansive.
		\item Any iteration of the first one 
		is positively $n$-expansive.
		\item Any of its subsystems 
		is positively $n$-expansive.
\end{enumerate}\end{remark}


\begin{lemma}\label{l:locally.preimages}
A system $(X,f)$ which is positively $n$-expansive is locally at-most-$n$-to-one (i.e. there exists $\eps>0$ such that for every $x \in X$,  and all $z\in X$, $\card{f^{-1}(z) \cap\ball[\eps]x} \le n$).
\end{lemma}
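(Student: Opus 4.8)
The plan is to use the expansiveness constant $\eps$ itself and to show that $\eps'\defeq\eps/2$ witnesses the ``locally at-most-$n$-to-one'' property. The key observation is that any two points sharing the same image under $f$ have identical forward orbits from time $1$ onward, so the \emph{only} place their orbits can separate is at time $0$; this reduces the whole problem to a triangle-inequality estimate on a single instant.

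First I would argue by contradiction: suppose that for $\eps'=\eps/2$ there exist $x,z\in X$ and $n+1$ pairwise distinct points $y_0,\dots,y_n\in f^{-1}(z)\cap\ball[\eps']x$. Since $f(y_i)=z$ for every $i$, we have $f^t(y_i)=f^{t-1}(z)$ for all $t\ge 1$, and hence $d\bigl(f^t(y_i),f^t(y_0)\bigr)=0<\eps$ for every $t\ge 1$ and every $i$. Thus the forward orbits of all the $y_i$ coincide after the first step.

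Next I would control the single remaining instant $t=0$: because each $y_i$ lies in $\ball[\eps']x$, the triangle inequality gives $d(y_i,y_0)\le d(y_i,x)+d(x,y_0)<2\eps'=\eps$. Combining this with the previous step, every $y_i$ satisfies $d\bigl(f^t(y_i),f^t(y_0)\bigr)<\eps$ for all $t\in\N$, which is exactly the condition $y_i\in W_\eps(y_0)$. This produces $n+1$ distinct elements $y_0,\dots,y_n$ of $W_\eps(y_0)$, contradicting positive $n$-expansiveness with constant $\eps$, which forces $\card{W_\eps(y_0)}\le n$.

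I do not expect a genuine obstacle here: the argument is essentially the collapse of preimage orbits combined with the triangle inequality, and the only point requiring care is the factor $2$ in the radius, which dictates the choice $\eps'=\eps/2$ so that closeness in $\ball[\eps']x$ upgrades to being within $\eps$ at time $0$.
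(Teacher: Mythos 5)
Your proof is correct and follows essentially the same argument as the paper: both exploit that points in $f^{-1}(z)$ have identical orbits from time $1$ onward and use the triangle inequality at time $0$, with only the cosmetic difference that you halve the expansiveness constant ($\eps'=\eps/2$) where the paper instead invokes expansiveness with constant $2\eps$ and uses balls of radius $\eps$. Your contradiction framing versus the paper's direct inclusion $f^{-1}(z)\cap\ball[\eps]x\subset W_{2\eps}(z')$ is an immaterial difference.
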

\begin{proof}
Since $(X,f)$ is positively $n$-expansive, there exists $\eps >0$ such that for every $x \in X$, $|W_{2\eps}(x)|\le n$.
We claim that for every $x \in X$, $z \in X$ and $z' \in f^{-1}(z) \cap \ball[\eps]x$, 
\[f^{-1}(z) \cap \ball[\eps]x \subset W_{2\eps}(z').\]
Indeed, consider $z'' \in f^{-1}(z) \cap \ball[\eps]x$. By the triangular inequality, 
\[d(z'',z') \le d(z'',x) + d(x,z') \le 2\eps.\]
Furthermore, since $f(z'')=f(z') = z$, for every $n \ge 1$, we have $d(f^n(z''),f^n(z'))=0 \le 2 \eps$. Therefore $z'' \in W_{2\eps}(z')$. Since this is true for every $z'' \in f^{-1}(z) \cap \ball[\eps]x$, we have $f^{-1}(z) \cap \ball[\eps]x \subset  W_{2\eps}(z')$.
This shows that the claim holds and therefore we have $|f^{-1}(z) \cap \ball[\eps]x| \le n$.  This ends the proof.
\end{proof}

\begin{corollary}\label{cor.bounded.fibers}
A system $(X,f)$ which is finitely positively expansive is bounded-to-one (i.e. the function $x \mapsto\card{f^{-1}(x)}$ is bounded).
\end{corollary} 
\begin{proof}
Considering some $\eps>0$ provided by Lemma~\ref{l:locally.preimages}, and a finite cover $\mathcal{U}$ by balls of the form $\ball[\eps]x$, Lemma~\ref{l:locally.preimages} gives that for every $z \in X$, $\card{f^{-1}(z)} \le n \card{\mathcal{U}}$.
\end{proof}






\begin{proposition}
For every integer $n \ge 1$, any dynamical system conjugate to a positively $n$-expansive dynamical system is also positively $n$-expansive.
\end{proposition}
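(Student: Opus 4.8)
The plan is to track how the witnessing sets $W_\eps$ transform under the conjugating homeomorphism, using compactness to control the distortion of distances. Let $(X,f)$ be positively $n$-expansive with constant $\eps$, let $h\colon X\to Y$ be a homeomorphism realizing the conjugacy, so that the system on $Y$ is $g=h\circ f\circ h^{-1}$, and write $d_X$, $d_Y$ for the respective metrics. For $a\in Y$ I would denote by $W^g_\delta(a)$ the analogue of $W_\eps$ computed in $(Y,g,d_Y)$, the quantity we must bound by $n$ for a suitable $\delta$.

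First I would record the clean intertwining of the dynamics: for every $t\in\N$ and every $x\in X$ one has $g^t(h(x))=h(f^t(x))$, whence $h^{-1}(g^t(a))=f^t(h^{-1}(a))$ for all $a\in Y$. The only genuine issue is that $h$ need not be an isometry, so a priori $\delta$-closeness of orbits in $Y$ says nothing about $\eps$-closeness in $X$. This is resolved by compactness: since $Y$ is compact and $h^{-1}$ is continuous, $h^{-1}$ is uniformly continuous, so there exists $\delta>0$ such that $d_Y(p,q)<\delta$ implies $d_X(h^{-1}(p),h^{-1}(q))<\eps$ for all $p,q\in Y$.

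With this $\delta$ I would show $h^{-1}\bigl(W^g_\delta(a)\bigr)\subseteq W_\eps(h^{-1}(a))$ for every $a\in Y$: if $b\in W^g_\delta(a)$ then $d_Y(g^t(a),g^t(b))<\delta$ for all $t\in\N$, and applying uniform continuity of $h^{-1}$ together with the intertwining relation gives $d_X\bigl(f^t(h^{-1}(a)),f^t(h^{-1}(b))\bigr)<\eps$ for all $t$, i.e. $h^{-1}(b)\in W_\eps(h^{-1}(a))$. Since $h^{-1}$ is injective, $\card{W^g_\delta(a)}=\card{h^{-1}\bigl(W^g_\delta(a)\bigr)}\le\card{W_\eps(h^{-1}(a))}\le n$. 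As $a$ was arbitrary, this shows that $(Y,g)$ is positively $n$-expansive with constant $\delta$.

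There is no real obstacle here; the whole argument is routine once one notices that the metric mismatch between $X$ and $Y$ is the only thing to handle, and that compactness makes the conjugacy uniformly bi-continuous. The single point requiring a little care is to invoke uniform continuity of $h^{-1}$ (rather than $h$) in the correct direction, since we transport the $Y$-side smallness back to the $X$-side, where the expansiveness hypothesis actually lives.
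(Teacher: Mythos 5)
Your proof is correct and follows essentially the same route as the paper's: both transport the sets $W$ through the conjugating homeomorphism, using (uniform) continuity of the inverse map to choose the new expansiveness constant, and conclude via injectivity that cardinalities are preserved. Your write-up is in fact slightly more careful than the paper's, since you explicitly invoke compactness to get uniform continuity of $h^{-1}$ and state the metric directions correctly.
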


\begin{proof}
Let us consider two dynamical systems $(X,f)$ and $(Y,g)$ respectively on metric spaces $(X,d)$ and $(Y,d')$. Assume that $(X,f)$ 
is positively $n$-expansive and consider $\eps>0$ such that for every $x \in X$, $|W_{\eps}(x)|\le n$. Let us also consider $\sigma : X \rightarrow Y$ a 
homeomorphism such that $\sigma \circ f \circ \sigma^{-1} = g$. 
Since $\sigma^{-1}$ is continuous, there exists some $\epsilon>0$ such that if $d(x,y)<\epsilon$ then $d'(\sigma^{-1}(x),\sigma^{-1}(y))< \eps$. Then for every $y \in Y$, 
\[W_{\epsilon}(y) = \sett[Y]z{\forall t\in\N,d(\sigma \circ f^t(\sigma^{-1}(z)), \sigma \circ f^t(\sigma^{-1}(y)))<\epsilon} \subset \sigma\left(W_{\eps}(\sigma^{-1}(y))\right).\]
As a consequence $\card{W_{\epsilon}(y)}\le n$. Since this is verified for every $y$, this means that $(Y,g)$ is positively $n$-expansive.
\end{proof}

Two elements $x,y$ of $X$ are \textbf{asymptotic} when 
$d(f^n(x),f^n(y)) \rightarrow 0$ when $n \rightarrow +\infty$. This defines an equivalence relation, and we denote by $\Asym(x)$ the equivalence class of $x$ for this relation. Clearly, for every $x$ and $\eps >0$, and any integer $n$ such that $n\le\card{\Asym(x)}$, there exists some $t\in\N$ such that $\card{W_{\eps}(f^t(x))}\ge n$. In other words, we have the following.

\begin{remark}\label{rem:asy}
If a system $(X,f)$ which is a homeomorphism is positively $n$-expansive, then for every $x \in X$, we have $\card{\Asym(x)}\le n$.
\end{remark}
\begin{example}
It is straightforward to see that even if a factor of a system is positively finitely expansive, the original system is not necessarily positively finitely expansive: one can consider for instance that the shift $\{\dinf0\}$ is a factor of the full shift $\{0,1\}^{\Z}$.
\end{example}

The above example leads by Remark~\ref{rem:asy} to the following simple observation.
\begin{cor}\label{cor:periodic}
Suppose that an invertible dynamical system $(X,f)$ is finitely positively expansive, and $x\in X$ is not periodic.
Then $\Asym(x)$ does not contain any periodic orbit.
\end{cor}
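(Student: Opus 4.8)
The plan is to argue by contradiction, first reducing to the case of a fixed point by passing to an iterate. Suppose $\Asym(x)$ contains a periodic point $p$, say of period $k\ge 1$, so that $d(f^n(x),f^n(p))\to 0$. I would then work with $g\defeq f^k$. By Remark~\ref{remark:elem}(3) the system $(X,g)$ is again positively $n$-expansive, it is still a homeomorphism, and $p$ is now a \emph{fixed} point of $g$. Observe that $x$ remains asymptotic to $p$ for $g$: since $f^{kn}(p)=p$ for every $n$, we get $d(g^n(x),p)=d(f^{kn}(x),f^{kn}(p))\to 0$, that is, $g^n(x)\to p$. Note also that $x$ is not $g$-periodic, since a period for $g$ would be a period for $f$.

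The heart of the argument is to use invertibility to propagate this forward convergence to the entire bi-infinite orbit of $x$. For an arbitrary $t\in\Z$ we have $g^m\bigl(g^t(x)\bigr)=g^{m+t}(x)$, and since $g^n(x)\to p$ this tends to $p$ as $m\to+\infty$; hence every point $g^t(x)$ of $\orb[\Z]{g}{x}$ is $g$-asymptotic to $p$, and consequently all these points are mutually $g$-asymptotic. This is the step I expect to be the crux: it is precisely here that invertibility is essential, because for negative $t$ the point $g^t(x)$ exists only because $g$ is a homeomorphism, yet its forward orbit still accumulates on the fixed point $p$. Thus the whole orbit $\orb[\Z]{g}{x}$ lies in the asymptotic class of $x$ for the system $(X,g)$.

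To finish, I would note that since $x$ is not $f$-periodic, the points $g^t(x)=f^{kt}(x)$ for $t\in\Z$ are pairwise distinct (an equality $f^{kt_1}(x)=f^{kt_2}(x)$ with $t_1\neq t_2$ would give, after applying a suitable iterate of $f^{-1}$, a genuine $f$-period for $x$). Therefore $\orb[\Z]{g}{x}$ is infinite, so the $g$-asymptotic class of $x$ is infinite as well. This contradicts Remark~\ref{rem:asy} applied to the positively $n$-expansive homeomorphism $(X,g)$, which forces that class to have at most $n$ elements. The contradiction shows that $\Asym(x)$ can contain no periodic point, and in particular no periodic orbit.
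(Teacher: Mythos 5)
Your proof is correct and takes essentially the same route as the paper, which obtains the corollary directly from Remark~\ref{rem:asy} by exhibiting infinitely many pairwise distinct, mutually asymptotic points along the orbit of $x$ under $f^k$. The only cosmetic difference is that you pass to the iterate $g=f^k$ (via Remark~\ref{remark:elem}(3)) and invoke two-sided orbits, whereas one can stay in $(X,f)$: the forward points $f^{jk}(x)$, $j\ge 0$, are already all $f$-asymptotic to the periodic point (since $f^{n+jk}(p)=f^n(p)$), hence lie in $\Asym(x)$, and are pairwise distinct by invertibility and non-periodicity of $x$, contradicting $\card{\Asym(x)}\le n$.
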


\subsection{Equicontinuity}

{
\begin{definition}
  Let $(X,f)$ be a dynamical system.
  If $\eps>0$, a point $x$ is \textbf{$\eps$-stable} if there exists $\delta_x>0$ such that for every $y\in X$ such that $d(x,y) < \delta_x$ we have $d(f^t(x),f^t(y)) < \eps$ for every $t\in\N$.
  A point $x$ is \textbf{equicontinuous} if it is $\eps$-stable for every $\eps>0$.
  The system $(X,f)$ is said to be \textbf{equicontinuous} when all points are equicontinuous (in which case compactness allows to chose $\delta_x$ uniformly).
  If $\eps>0$, $(X,f)$ is said to be \textbf{$\eps$-sensitive} when no point is $\eps$-stable. It is \textbf{sensitive} if it is $\eps$-sensitive for some $\eps>0$.
\end{definition}


In the following, we will say that a dynamical system $(X,f)$ is \textbf{finite}\resp{\textbf{countable}, \textbf{uncountable}} when $X$ is so.
\begin{proposition}\label{p:dicho}
  Every finitely positively expansive system with constant $\eps>0$ is the disjoint union of a discrete set of equicontinuous points and a (perfect) $\eps$-sensitive subsystem.
\end{proposition}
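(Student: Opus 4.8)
The plan is to identify the discrete part with the set of equicontinuous points and to show that, under finite positive expansiveness, this set coincides with the set of isolated points. Fix $\eps>0$ witnessing positive $n$-expansiveness, so that $\card{W_{\eps}(x)}\le n$ for every $x$. The first step I would carry out is the equivalence, for a point $x$, between: (i) $x$ is $\eps$-stable; (ii) $x$ is isolated; (iii) $x$ is equicontinuous. The implication (i)$\Rightarrow$(ii) is the only place expansiveness enters: if $x$ is $\eps$-stable with modulus $\delta_x$, then by definition $\ball[\delta_x]{x}\subseteq W_{\eps}(x)$, and since the latter has at most $n$ elements, $\ball[\delta_x]{x}$ is finite, so a smaller ball reduces to $\{x\}$. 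The implication (ii)$\Rightarrow$(iii) is immediate (a neighbourhood reduced to $\{x\}$ makes every $\eps'$-stability condition vacuous), and (iii)$\Rightarrow$(i) holds by definition. Denote by $E$ this common set; being a union of open singletons it is open, and it is discrete in the subspace topology.

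Next I would check that $S\defeq X\setminus E$ is a subsystem, i.e. closed and forward invariant. Closedness is clear since $E$ is open. For invariance it suffices to prove $f^{-1}(E)\subseteq E$, i.e. that the preimage of an isolated point consists of isolated points. Suppose $f(x)=z$ with $z$ isolated but $x$ not isolated; then $x$ is a limit of distinct points $x_k\to x$, and since $\{z\}$ is open, continuity of $f$ gives $f(x_k)=z$ for all large $k$. Thus $f^{-1}(z)$ meets every ball around $x$ in infinitely many points, contradicting the locally-at-most-$n$-to-one property of Lemma~\ref{l:locally.preimages}. Hence $x$ is isolated, $f(S)\subseteq S$, and $(S,f\restr{S})$ is a genuine subsystem, disjoint from $E$ with $E\cup S=X$.

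It then remains to show that $(S,f\restr{S})$ is $\eps$-sensitive and perfect, and in fact the same finiteness argument as in the first step shows that these two properties are equivalent for $S$. Indeed, if some $x\in S$ were $\eps$-stable inside $S$, then $\ball[\delta]{x}\cap S\subseteq W_{\eps}(x)$ would be finite and $x$ would be isolated in $S$; conversely an isolated point of $S$ is trivially $\eps$-stable in $S$. So $\eps$-sensitivity of $S$ amounts exactly to $S$ having no isolated point, i.e. to $S$ being perfect.

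The main obstacle is therefore perfectness of $S$: one must rule out that a non-equicontinuous point $x$ becomes isolated once all of $E$ has been removed, i.e. that $x$ accumulates only equicontinuous points and no point of $S$. The natural attempt is by contradiction: from such an $x$ one would try to manufacture infinitely many points sharing the forward $\eps$-orbit of $x$ (or of some $f^t(x)$), contradicting $\card{W_{\eps}(\cdot)}\le n$. Making this rigorous is the delicate point, since a priori the orbits of the approximating equicontinuous points may escape the $\eps$-neighbourhood of the orbit of $x$, so the crude count fails; I expect that this is exactly where the full strength of finite positive expansiveness must be used, presumably together with compactness through the structure of $\omega_f(x)$ and the bound on asymptotic classes from Remark~\ref{rem:asy}, and it is the step on which I would concentrate the real work.
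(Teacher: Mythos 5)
Your completed steps are, in substance, exactly the paper's proof, written with more care. The paper's entire argument is: if $x$ is $\eps$-stable then $\ball[\delta_x]{x}\subseteq W_\eps(x)$, which is finite, so $x$ is isolated, hence equicontinuous; the set of $\eps$-unstable points is forward invariant; and it is closed because its complement is a union of isolated points, ``hence it yields a $\eps$-sensitive subsystem''. The only real difference in route is the invariance step: the paper gets it from continuity alone (if $f(x)$ is $\eps$-stable with modulus $\delta$, choose $\delta'\le\eps$ with $f(\ball[\delta']{x})\subseteq\ball[\delta]{f(x)}$; then $x$ is $\eps$-stable with modulus $\delta'$), whereas you go through Lemma~\ref{l:locally.preimages} to show that preimages of isolated points are isolated; both are valid, the paper's being lighter. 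Crucially, the final ``hence'' is the whole of the paper's treatment of sensitivity and perfectness: it implicitly reads ``$\eps$-sensitive'' as ``made of points that are not $\eps$-stable in $X$'', which holds by construction of $S$, and it never proves anything intrinsic about $(S,f\restr S)$.

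Consequently, the obstacle on which you propose to ``concentrate the real work'' --- perfectness of $S$, equivalently (as you correctly show) $\eps$-sensitivity of $(S,f\restr S)$ as a system in its own right --- is not a hidden step you failed to reconstruct: it is absent from the paper, and in fact it cannot be proved at this level of generality. Consider the one-sided subshift $Y=\{\uinf0\}\cup\sett{0^k1\uinf0}{k\in\N}\subset\{0,1\}^{\N}$. Like every $\N$-subshift it is positively $1$-expansive (with constant $1$ for the usual metric), and it is compact and countably infinite. Its equicontinuous points are exactly the isolated points $0^k1\uinf0$, while $S=\{\uinf0\}$ is a singleton fixed point: a subsystem that is neither perfect nor $\eps$-sensitive, since a one-point system is equicontinuous. (The shift on $Y$ is not injective, as $\uinf 0$ and $1\uinf0$ have the same image, so this is consistent with the paper's results on invertible systems; but Proposition~\ref{p:dicho} assumes no invertibility.) So the intrinsic reading of the conclusion is false, only the extrinsic one survives --- no point of $S$ is $\eps$-stable in $X$, and no point of $S$ is isolated in $X$ --- and under that reading your first three steps already constitute a complete proof, essentially identical to the paper's. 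Your instinct that the crude counting argument fails was right; the correct conclusion is not that a cleverer argument is needed, but that no argument exists. The same example shows that the parenthetical ``(perfect)'' and the deduction of Corollary~\ref{c:equicontinuous}\eqref{i:indnb} (which relies on it) fail as literally stated in the non-invertible case, since $Y$ is a countably infinite finitely positively expansive system.
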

\begin{proof}
  Let $x\in X$ be $\eps$-stable: the ball of radius $\delta_x$ around $x$ is included in the finite set $W_\eps(x)$.
  This implies that $x$ is isolated, hence equicontinuous.
  By continuity of $f$, the set of $\eps$-unstable points is invariant by $f$.
  Moreover, it is closed because its complement is a union of isolated points, hence it yields a $\eps$-sensitive subsystem.
\end{proof}
\begin{corollary}\label{c:equicontinuous}~
  \begin{enumerate}
    \item\label{i:cleq} In any finitely positively expansive system, every closed set of equicontinuous points is finite.
    \item\label{i:eq} Any finitely positively expansive equicontinuous system is finite.
    \item\label{i:sens} Any finitely positively expansive system $(X,f)$ over a perfect space $X$ is sensitive.
    \item Any finitely positively expansive system admits only finitely many periodic points of each period $p\in\Z_+$.
    \item\label{i:indnb} Any finitely positively expansive system is either finite or uncountable.
\end{enumerate}\end{corollary}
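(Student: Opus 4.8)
The engine behind the first three items is Proposition~\ref{p:dicho}, or more precisely the observation drawn from its proof that, in a finitely positively expansive system with constant $\eps$, every $\eps$-stable point is isolated. Since an equicontinuous point is in particular $\eps$-stable, every equicontinuous point is isolated in $X$. For item~\ref{i:cleq} I would take a closed set $E$ of equicontinuous points: each of its points is isolated in $X$, so $E$ is a discrete compact space and is therefore finite. Item~\ref{i:eq} then follows by applying item~\ref{i:cleq} to $E=X$, the whole space being closed and consisting of equicontinuous points. For item~\ref{i:sens}, if $X$ is perfect it has no isolated point, so the discrete set of equicontinuous points furnished by Proposition~\ref{p:dicho} must be empty; hence $X$ coincides with its $\eps$-sensitive subsystem and is in particular sensitive.

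For the periodic-point statement I would fix $p\in\Z_+$ and pass to the iterate $(X,f^p)$, which is again positively $n$-expansive, with some constant $\delta>0$, by Remark~\ref{remark:elem}. If $x,y$ are fixed points of $f^p$ with $d(x,y)<\delta$, their $f^p$-orbits are constant, so $d\big((f^p)^k(x),(f^p)^k(y)\big)=d(x,y)<\delta$ for every $k$, which means that $y$ lies in the $\delta$-tube of $x$ for the system $(X,f^p)$. Positive $n$-expansiveness of $f^p$ then bounds by $n$ the number of $f^p$-fixed points in any ball $\ball[\delta]{x}$ centred at such a point. Covering the compact space $X$ by finitely many balls of radius $\delta/2$ (so that any two points of one such ball are within $\delta$ of each other) bounds the total number of fixed points of $f^p$ by $n$ times the number of balls in the cover. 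Since the points of period $p$ form a subset of the fixed-point set of $f^p$, they are finite in number.

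The last item, \ref{i:indnb}, is the one with genuine content. By Proposition~\ref{p:dicho}, $X$ splits as a disjoint union of a discrete set of equicontinuous points and a perfect $\eps$-sensitive subsystem $S$. If $S=\emptyset$, then $X$ is a discrete compact space, hence finite. If $S\neq\emptyset$, then $S$ is a nonempty perfect subset of the complete (compact) metric space $X$, and I would invoke the classical fact that any nonempty perfect set in a complete metric space has cardinality at least that of the continuum; hence $S$, and a fortiori $X$, is uncountable. This yields the finite-or-uncountable dichotomy.

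Items~\ref{i:cleq}--\ref{i:sens} are essentially formal consequences of the dichotomy, and item~\ref{i:indnb} reduces to a standard perfect-set theorem, so the only step requiring a fresh argument is the periodic-point bound. The subtlety I expect there is that positive $n$-expansiveness of $f^p$ holds only for a constant $\delta$ strictly smaller than the original $\eps$ (one must absorb the intermediate iterates $f,\dots,f^{p-1}$ through uniform continuity), so one cannot simply reuse the constant of $f$; once the correct $\delta$ is fixed, the tube bound combined with the compactness packing argument closes the proof.
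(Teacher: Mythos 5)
Your proposal is correct. Items \eqref{i:cleq}--\eqref{i:sens} and \eqref{i:indnb} follow the paper's own route: by Proposition~\ref{p:dicho}, $\eps$-stable (in particular equicontinuous) points are isolated, a closed discrete subset of a compact space is finite, and the finite-or-uncountable dichotomy reduces to the folklore fact that a non-empty perfect set in a compact metric space (equivalently here, a non-empty sensitive system) is uncountable. The only place where you genuinely diverge is the periodic-point bound. The paper disposes of it in one line: the set of fixed points of $f^p$ is a closed invariant set on which $f$ is equicontinuous (only the iterates $f^0,\dots,f^{p-1}$ matter on that set, and they are uniformly continuous), so it is a finitely positively expansive equicontinuous subsystem, hence finite by item~\eqref{i:eq}. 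You instead pass to the iterate $(X,f^p)$ --- positively $n$-expansive with a possibly smaller constant $\delta$, exactly as you note, via Remark~\ref{remark:elem} and uniform continuity --- observe that any two $f^p$-fixed points at distance less than $\delta$ lie in each other's $\delta$-tubes, and close with a packing argument over a finite cover by balls of radius $\delta/2$. Both arguments are valid; the paper's is shorter because it recycles item~\eqref{i:eq}, while yours is more self-contained (it is essentially the same tube-plus-covering mechanism as in the proof of Corollary~\ref{cor.bounded.fibers}) and yields the explicit quantitative bound $n\card{\mathcal U}$ on the number of points of period $p$.
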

\begin{proof}~\begin{enumerate}
  \item Every closed discrete set is finite.
  \item This is clear from \eqref{i:cleq}.
  \item This is clear.
  \item The restriction of $f$ to the set of periodic points of period $n$ is an equicontinuous subsystem.
  \item It is folklore that every non-empty sensitive dynamical system is uncountable (as is every non-empty perfect Hausdorf space).
    \popQED\end{enumerate}\end{proof}

\subsection{Transitivity and periodicity}
A dynamical system $(X,f)$ is said to be \textbf{aperiodic} when it has no periodic point.
We also say that a point $x$ is \textbf{preperiodic} if its orbit $\orb fx$ is finite
, and \textbf{asymptotically periodic} if there exist an integer $p\ge 1$ and $y\in X$ such that $\lim_{t\to\infty}f^{pt}(x)=y$.
\begin{proposition}~\label{p:transnexp}
  Assume that $X$ is such that all open balls of radius $\eps$ are closed, and that $(X,f)$ is an invertible finitely positively expansive system with constant $\eps>0$. 
  If $x\in X$ is not periodic, then $\cl{\orb fx}$ is aperiodic. 
  In particular, every asymptotically periodic point is periodic.
\end{proposition}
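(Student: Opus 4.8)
The plan is to prove the first assertion by contradiction and to deduce the ``in particular'' clause from it. So suppose $x$ is not periodic yet $\cl{\orb fx}$ contains a periodic point $p$, say of period $q$, and write $p_s\defeq f^s(p)$ for $s\in\Z$. First I would record two reductions. Since $f$ is a homeomorphism, $p$ cannot lie on $\orb fx$ (otherwise $f^q$ would fix a point of that orbit and force $x$ to be periodic), so there is a sequence $n_k\to\infty$ with $z_k\defeq f^{n_k}(x)\to p$, and each $z_k$ is non-periodic. Second, I would establish the structural fact that $W_\eps(p_r)$ is finite and consists only of periodic points: $f$ maps $W_\eps(p_r)$ injectively into $W_\eps(p_{r+1})$, so $f^q$ restricts to an injective self-map of the finite set $W_\eps(p_r)$, hence a permutation, whence every element is periodic. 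I would also note that if $u\in W_\eps(p_r)$ is periodic then $t\mapsto d(f^t(u),f^t(p_r))$ is a periodic function of $t$ which is $<\eps$ for $t\ge 0$, and is therefore $<\eps$ for \emph{all} $t\in\Z$.

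The core is a dichotomy on the backward tracking length $a_k$, the largest $a$ with $d(f^{-i}(z_k),p_{-i})<\eps$ for $0\le i\le a$; since $f^{-i}(z_k)\to p_{-i}$ for each fixed $i$, one has $a_k\to\infty$ in every case. In \textbf{Case A}, where $a_k<\infty$ for infinitely many $k$, I recenter at the left end of the backward window by setting $u_k\defeq f^{-a_k}(z_k)$ and pass to a subsequence along which $a_k\bmod q$ is a fixed value $c$, so $p_{-a_k}=p_{-c}=:\tilde p$. A short computation gives $d(f^t(u_k),f^t(\tilde p))<\eps$ for $0\le t\le a_k$, together with the escape $d(f^{-1}(u_k),f^{-1}(\tilde p))\ge\eps$. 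Choosing $u_k\to u_*$ by compactness, this is exactly where the hypothesis that open balls of radius $\eps$ are closed is essential: from $d(f^t(u_k),f^t(\tilde p))<\eps$ (valid once $a_k\ge t$) and closedness of $\ball[\eps]{f^t(\tilde p)}$ I obtain $d(f^t(u_*),f^t(\tilde p))<\eps$ for every $t\ge 0$, i.e. $u_*\in W_\eps(\tilde p)$, so $u_*$ is periodic; meanwhile the escape inequality passes to the limit for free since the complement of an open ball is closed. But $u_*,\tilde p$ periodic with $u_*\in W_\eps(\tilde p)$ force, by the bi-infinite extension, $d(f^{-1}(u_*),f^{-1}(\tilde p))<\eps$, contradicting the escape.

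In \textbf{Case B}, where $a_k=\infty$ for all large $k$, the orbit of $x$ stays within $\eps$ of the backward orbit of $p$ forever; rewriting $d(f^{-i}(z_k),p_{-i})<\eps$ with $m=n_k-i$ gives $d(f^m(x),p_{m-n_k})<\eps$ for all $m\le n_k$. Passing to a subsequence with $n_k\equiv c\pmod q$ makes $p_{m-n_k}=p_{m-c}=f^m(p_{-c})$ independent of $k$, and since $n_k\to\infty$ along it, for every $m\in\Z$ some $k$ has $n_k\ge m$, yielding $d(f^m(x),f^m(p_{-c}))<\eps$ for all $m\in\Z$. In particular $x\in W_\eps(p_{-c})$, so $x$ is periodic by the structural fact, contradicting the choice of $x$. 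As both cases are impossible, $\cl{\orb fx}$ is aperiodic. Finally, if $x$ is asymptotically periodic via $f^{pt}(x)\to y$, continuity gives $f^p(y)=y$, so $y$ is periodic and $y\in\cl{\orb fx}$; aperiodicity of the orbit closure then forces $x$ itself to be periodic, which proves the last sentence.

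I expect the main obstacle to be precisely the asymmetry of positive expansiveness: it bounds the forward $\eps$-mate sets $W_\eps$ but controls nothing a priori about backward stable sets, so the natural recentering that yields a forward-stable (hence periodic) limit point only works when backward tracking is finite. The two-case split is designed to route around this — infinite backward tracking is handled by the direct computation of Case B rather than by attempting to bound a backward analogue of $W_\eps$ — while the clopen-ball hypothesis is what converts the growing finite-time tracking of Case A into an honest membership in $W_\eps(\tilde p)$ in the limit.
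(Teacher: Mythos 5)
Your proof is correct, and at its core it runs on the same engine as the paper's: a dichotomy between eternal $\eps$-tracking of the periodic orbit (killed by finiteness of $W_\eps$ plus invertibility) and finite-time escape (killed by taking a limit of longer and longer tracking windows, using the closed-ball hypothesis to keep the inequalities strict, and then using periodicity of the limit point to propagate tracking to time $-1$, contradicting the escape). The packaging, however, differs in two ways worth noting. First, the decomposition: the paper splits on whether some phase $q$ of the periodic point is forward-tracked by $x$ itself, and in the escape case builds windows from minimal return times $t_\ell$ of the forward orbit of $x$, whereas you split on the backward tracking lengths $a_k$ of the approximants $z_k=f^{n_k}(x)$ and recenter at $u_k=f^{-a_k}(z_k)$; your version makes the escape inequality $d(f^{-1}(u_k),f^{-1}(\tilde p))\ge\eps$ an automatic consequence of maximality of $a_k$, which sidesteps the slightly delicate minimality bookkeeping in the paper's second case. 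Second, you isolate a clean, reusable lemma --- $f^q$ restricts to a permutation of the finite set $W_\eps(p_r)$, so that set consists of periodic points --- and use it to close both cases, where the paper instead argues inline that $\orb{f^p}x\subset W_\eps(f^q(y))$ being finite forces $x$ (or the limit point $z$) to be periodic; the two arguments use the same ingredients (finiteness of $W_\eps$ and injectivity of $f$), but your formulation cleanly separates the structural fact from its two applications. Both proofs invoke the hypothesis that open $\eps$-balls are closed at exactly the same moment and for exactly the same purpose, and both treat the ``in particular'' clause identically.
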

The assumptions	of the proposition above are satisfied when $X$ is totally disconnected, or more generally when $\eps$ is larger than the diameter of every connected component.
\begin{proof}
  Assume that $\cl{\orb fx}$ contains a point $y$ with some period $p\in \Z_+$.
  First, suppose that there exists $q\in\co0p$ such that for every $t\in \N$, $d(f^t(x),f^{q+t}(y))<\eps$.
  In particular, for every $k\in\N$, $d(f^{kp+t}(x),f^{q+t}(y))<\eps$.
  This means that $\orb {f^p}x\subset W_\eps(f^q(y))$.
  Since this set is finite, $x$ is periodic, which is a contradiction. 
  Now, suppose that for every $q\in\co0p$, there exists $t\in \N$ such that $d(f^t(x),f^{q+t}(y))\ge\eps$.
  Since $y\in\cl{\orb fx}$, for every $\ell>t$, there exists $t_\ell$ such that $f^{t_\ell}(x)$ is in the open neighborhood $\bigcap_{i=0}^{\ell-1}f^{-i}(\ball[\eps]{f^{q+i}(y)})$ of $f^q(y)$, for some $q\in\co0p$.
  Thanks to our assumption, one can assume $t_\ell>t$ to be minimal, so that $d(f^{t_\ell-1}(x),f^{q-1\bmod p}(y))\ge\eps$.
  By compactness of $X$ (and finiteness of $\co0p$), there exists $q\in\co0p$ such that $f^{t_\ell-1}(x)$ admits a limit point $z$ such that $d(z,f^{q}(y))\ge\eps$ and for every $i\in \N$,  $d(f^i(z),f^{q+i}(y))\le\eps$.
    But by assumption, $\ball[\eps]{f^q(y)}$ is a closed set, hence the last inequality becomes $d(f^i(z),f^{q+i}(y))<\eps$.
  Again, this means that $\orb {f^p}z\subset W_\eps(f^q(y))$.
  Since this set is finite, $z$ is periodic, which contradicts that $d(f^{-1}(z),f^{q-1}(y))\ge\eps$.
\end{proof}
The conclusion of Proposition~\ref{p:transnexp} does not imply that, in the case when $f$ is invertible, the closure of the twosided orbit $\cl{\orb[\Z]fx}$ is aperiodic, though.
For instance, consider the configuration $x$ such that $x_{\Z_{-}}=\pinf0$ and $x_{\N}$ is the concatenation of all finite iterates of the Thue-Morse substitution from symbol $0$.

\begin{remark}
  Proposition~\ref{p:transnexp} implies that transitive finitely positively expansive systems over a totally disconnected space are aperiodic or finite.
  Indeed, every transitive dynamical system admits a point $x \in X$ such that $\orb fx$ is dense.
  This is actually a characterization of transitivity, in the case when $f$ is surjective or when $X$ is perfect (see for instance in \cite{Kolyada}).
\end{remark}

}
\subsection{Entropy}\label{section:entropy}
While finitely positively expansive invertible dynamical systems can be act on infinite spaces, they must have low complexity. Strictly speaking, after presenting some definitions and known facts in Section~\ref{section.entropy.def}, we prove in Section~\ref{section.entropy.thm} that every such system has zero topological entropy.

\subsubsection{Definitions}\label{section.entropy.def}

In this section we recall a few facts about topological entropy which will be used later. We direct the reader who is not familiar with this notion to any
standard textbook on ergodic theory, e.g. \cite{Walters}.

Any set of subsets of $X$, denoted by $\mathcal{U}$, is a \textbf{cover} of $X$ if: 
\[
\bigcup_{u \in \mathcal{U}} u = X.
\]
A cover is said to be \textbf{open} when all its elements are open sets. Provided a cover $\mathcal{U}$, we denote, for every $n \in\N$, by $\bigvee_{i=0}^{n-1}f^{-i}\mathcal{U}$, the set of all non-empty subsets of $X$ of the form: 

\[u_0 \cap f^{-1}(u_1) \cap \ldots \cap f^{-{n-1}} (u_{n-1}),\]

where for every $i \in \llbracket 0 , n \llbracket$, $u_i \in \mathcal{U}$. Clearly, for every $n \in\N$, $\bigvee_{i=0}^{n-1}f^{-i}\mathcal{U}$ is also a cover of $X$ and it is open when $\mathcal{U}$ was open. 

\begin{notation}
For every $K \subset X$ compact and $\mathcal{U}$ open cover of $K$, we denote by 
$\mathcal{N}(K,\mathcal{U})$ the minimal number of elements of $\mathcal{U}$ which form
a cover of $K$ (because $K$ is compact, this number is well defined). When $K=X$, we simplify the notation $\mathcal{N}(X,\mathcal{U})$ and simply write $\mathcal{N}(\mathcal{U})$.
\end{notation}


The sequence $n \mapsto \mathcal{N}\left(\bigvee_{i=0}^{n-1}f^{-i}\mathcal{U}\right)$ is sub-multiplicative. As a consequence, the sequence 
$n \mapsto \log_2 \left(\mathcal{N}\left(\bigvee_{i=0}^{n-1}f^{-i}\mathcal{U}\right)\right)/n$
converges. We denote this limit by $h(\mathcal{U},X,f)$ and call it \textbf{entropy of the cover $\mathcal{U}$}.
The \textbf{topological entropy} $h(X,f)$ of the system $(X,f)$ is defined as the supremum, over finite open covers $\mathcal{U}$ of $X$, 
of $h(\mathcal{U},X,f)$.

For any open cover $\mathcal{U}$, we call \textbf{closure} of $\mathcal{U}$ 
and denote it by $\overline{\mathcal{U}}$, the set of $\overline{u}$ for $u \in \mathcal{U}$.
The following is straightforward:
\begin{lemma}\label{lemma.entropy.closed.cover}
The topological entropy of $(X,f)$ is also equal to the supremum of $h(\overline{\mathcal{V}},X,f)$, where $\mathcal{V}$ ranges over the open covers. 
\end{lemma}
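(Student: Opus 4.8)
The plan is to prove two inequalities between $h(X,f)=\sup_{\mathcal U}h(\mathcal U,X,f)$ (supremum over finite open covers) and $S\defeq\sup_{\mathcal V}h(\overline{\mathcal V},X,f)$ (supremum over open covers $\mathcal V$). The whole argument rests on the elementary monotonicity of the covering number $\mathcal N$ under refinement: if every member of a cover $\mathcal A$ is contained in some member of a cover $\mathcal B$, then $\mathcal N(\mathcal B)\le\mathcal N(\mathcal A)$, since any subfamily of $\mathcal A$ covering $X$ can be replaced, member by member, by a covering subfamily of $\mathcal B$ of no greater cardinality. This relation is preserved by the operations $f^{-i}(\cdot)$ and by finite intersection, so that $\bigvee_{i=0}^{n-1}f^{-i}\mathcal A$ refines $\bigvee_{i=0}^{n-1}f^{-i}\mathcal B$ as well; passing to the limit gives $h(\mathcal B,X,f)\le h(\mathcal A,X,f)$, i.e. finer covers have larger entropy.

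For the inequality $S\le h(X,f)$, I would fix any open cover $\mathcal V$ and observe that each $v\in\mathcal V$ satisfies $v\subset\overline v$, so $\mathcal V$ refines $\overline{\mathcal V}$; by the monotonicity above this gives $h(\overline{\mathcal V},X,f)\le h(\mathcal V,X,f)$. Moreover, by compactness $\mathcal V$ admits a finite subcover $\mathcal V'$, and since $\mathcal V'$ trivially refines $\mathcal V$ we also have $h(\mathcal V,X,f)\le h(\mathcal V',X,f)\le h(X,f)$. Taking the supremum over $\mathcal V$ yields $S\le h(X,f)$.

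For the reverse inequality $h(X,f)\le S$, I would start from an arbitrary finite open cover $\mathcal U=\{u_1,\dots,u_m\}$ and produce an open cover $\mathcal V=\{v_1,\dots,v_m\}$ with $\overline{v_j}\subset u_j$ for each $j$. Then $\overline{\mathcal V}$ refines $\mathcal U$, so the monotonicity gives $h(\mathcal U,X,f)\le h(\overline{\mathcal V},X,f)\le S$; taking the supremum over $\mathcal U$ gives $h(X,f)\le S$ and closes the proof.

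The one nontrivial ingredient, and the step I expect to be the main obstacle, is the existence of such a shrunk cover $\mathcal V$. This is the classical shrinking lemma for finite open covers of a normal space, which $X$ is, being compact metric. I would obtain it by shrinking the members one at a time: the set $K_1\defeq X\setminus\bigcup_{j\ge2}u_j$ is closed and contained in $u_1$, so normality provides an open $v_1$ with $K_1\subset v_1\subset\overline{v_1}\subset u_1$, and $\{v_1,u_2,\dots,u_m\}$ is still a cover; iterating over the remaining indices produces the desired $\mathcal V$. All other steps are the routine manipulations of $\mathcal N$ and of the join operation $\bigvee_{i=0}^{n-1}f^{-i}(\cdot)$ recalled in Section~\ref{section.entropy.def}.
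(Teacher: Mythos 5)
Your proof is correct. Note that the paper offers no proof at all here — the lemma is prefaced by ``The following is straightforward'' — so there is nothing to compare against; you have simply supplied the argument the authors left implicit. Your two ingredients are exactly the right ones: (i) refinement monotonicity of $\mathcal{N}$, hence of cover entropy, which gives $\sup_{\mathcal V}h(\overline{\mathcal V},X,f)\le h(X,f)$ since $\mathcal V$ refines $\overline{\mathcal V}$ and a finite subcover refines $\mathcal V$; and (ii) the shrinking lemma for finite open covers of a normal space, which produces, for a finite open cover $\mathcal U$, an open cover $\mathcal V$ with $\overline{\mathcal V}$ refining $\mathcal U$, yielding $h(\mathcal U,X,f)\le h(\overline{\mathcal V},X,f)$ and hence the reverse inequality. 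You also correctly identified (ii) as the only nontrivial point; as a minor remark, in the compact metric setting one could replace the normality argument by a Lebesgue-number one (take $\mathcal V$ to be the cover by open balls of radius $\delta/2$, where $\delta$ is a Lebesgue number of $\mathcal U$, so that each $\overline v\in\overline{\mathcal V}$ has diameter at most $\delta$ and lies in some member of $\mathcal U$), but your version is equally valid.
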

%
%
%

A set $E\subset X$ is $\boldsymbol{(n,\epsilon)}$-\textbf{separated}, if for any distinct $x,y\in E$ there is some $0\leq i <n$ such that $d(f^i(x),f^i(y))>\epsilon$.
Let us denote $s(n,\epsilon)$ the 
maximal cardinality of an $(n,\epsilon)$-separated set. 
We will also need the following well known fact, proven first by Bowen: 

\begin{theorem}\label{thm:separated.sets}
The entropy of the system $(X,f)$ is given 
by the following formula: 
\[h(X,f) = \lim_{\epsilon \rightarrow 0} \limsup_n \frac{1}{n} \log(s(n,\epsilon))\]
\end{theorem}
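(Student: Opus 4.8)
The plan is to bracket the exponential growth rate of $s(n,\eps)$ between the entropies of suitable families of finite open covers, using the Bowen metrics $d_n(x,y)\defeq\max_{0\le i<n}d(f^i(x),f^i(y))$, and then to let the relevant scales shrink to $0$. Two preliminary observations make the bracketing work. First, if $\eps_1\le\eps_2$ then every $(n,\eps_2)$-separated set is $(n,\eps_1)$-separated, so the map $\eps\mapsto\limsup_n\frac1n\log s(n,\eps)$ is non-increasing; hence its limit as $\eps\to0$ exists (it is the supremum over $\eps>0$) and the right-hand side is well defined. Second, a maximal $(n,\eps)$-separated set $E$ is \emph{spanning} at scale $\eps$: for every $x\in X$ there is $y\in E$ with $d_n(x,y)\le\eps$, for otherwise $E\cup\{x\}$ would still be separated.

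For the inequality $h(X,f)\le\lim_{\eps\to0}\limsup_n\frac1n\log s(n,\eps)$, I would fix a finite open cover $\mathcal U$, let $\delta>0$ be a Lebesgue number for it, and put $\eps\defeq\delta/5$. Given a maximal $(n,\eps)$-separated set $E$, each $y\in E$ determines the set $\bigcap_{i=0}^{n-1}f^{-i}(\ball[2\eps]{f^i(y)})$. Because $d_n(x,y)\le\eps$ forces $f^i(x)\in\ball[2\eps]{f^i(y)}$ for all $i$, these sets cover $X$ by the spanning property; and since $\diam\ball[2\eps]{f^i(y)}\le4\eps<\delta$, each ball $\ball[2\eps]{f^i(y)}$ lies in some $U_i\in\mathcal U$, so the above set is contained in the element $\bigcap_{i=0}^{n-1}f^{-i}(U_i)$ of $\bigvee_{i=0}^{n-1}f^{-i}\mathcal U$. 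Consequently $\mathcal N\bigl(\bigvee_{i=0}^{n-1}f^{-i}\mathcal U\bigr)\le\card E=s(n,\eps)$; dividing by $n$, passing to the limit, and using monotonicity gives $h(\mathcal U,X,f)\le\limsup_n\frac1n\log s(n,\eps)\le\lim_{\eps'\to0}\limsup_n\frac1n\log s(n,\eps')$. Taking the supremum over $\mathcal U$ yields the inequality.

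For the reverse inequality I would fix $\eps>0$ and use compactness to choose a finite open cover $\mathcal U$ all of whose members have diameter $<\eps$ (e.g.\ the balls of radius $\eps/3$). If two points $x,y$ lie in a common element $\bigcap_{i=0}^{n-1}f^{-i}(U_i)$ of $\bigvee_{i=0}^{n-1}f^{-i}\mathcal U$, then $f^i(x),f^i(y)\in U_i$ gives $d(f^i(x),f^i(y))<\eps$ for every $i$, hence $d_n(x,y)<\eps$; so any $(n,\eps)$-separated set meets each element of a minimal subcover in at most one point, whence $s(n,\eps)\le\mathcal N\bigl(\bigvee_{i=0}^{n-1}f^{-i}\mathcal U\bigr)$. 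Therefore $\limsup_n\frac1n\log s(n,\eps)\le h(\mathcal U,X,f)\le h(X,f)$ for every $\eps>0$, and letting $\eps\to0$ completes the proof.

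The containment computations are routine; the step that needs care — and the only genuine obstacle — is the two-scale bookkeeping in the first inequality. One must tie the separation scale $\eps$ to the Lebesgue number $\delta$ of $\mathcal U$ (here $\eps=\delta/5$) so that balls of radius $2\eps$ still fall inside members of $\mathcal U$, while guaranteeing that when $\mathcal U$ ranges over all finite open covers, and hence $\delta\to0$, the coupled scales $\eps$ also tend to $0$. This is exactly what allows the single supremum over covers defining $h(X,f)$ to be matched against the single limit $\eps\to0$ on the right-hand side. Throughout, the logarithm is understood in the same base as in the definition of $h(\mathcal U,X,f)$.
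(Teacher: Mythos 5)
Your proof is correct. Note that the paper does not prove this statement at all: it is stated as a well-known fact due to Bowen, with the reader directed to standard textbooks such as \cite{Walters}, so there is no internal proof to compare against. Your argument is exactly the classical one from those references: bounding $s(n,\eps)\le\mathcal{N}\left(\bigvee_{i=0}^{n-1}f^{-i}\mathcal{U}\right)$ for covers $\mathcal{U}$ of mesh smaller than $\eps$, and conversely $\mathcal{N}\left(\bigvee_{i=0}^{n-1}f^{-i}\mathcal{U}\right)\le s(n,\eps)$ by using that a maximal $(n,\eps)$-separated set is $(n,\eps)$-spanning and that Bowen balls of radius $2\eps$ sit inside elements of $\mathcal{U}$ once $4\eps$ is below a Lebesgue number $\delta$ of $\mathcal{U}$; the two-scale bookkeeping ($\eps=\delta/5$, monotonicity of $\limsup_n\frac1n\log s(n,\eps)$ in $\eps$) that you flag as the delicate point is handled correctly, so the supremum over covers and the limit $\eps\to0$ match as claimed.
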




\subsubsection{Finitely positively expansive systems have zero entropy}\label{section.entropy.thm}

\begin{lemma}\label{lemma.decreasing.compacts}
	Let us consider any finite open cover $\mathcal{U}$ of $X$, 
	and $K_n$ a decreasing sequence of compact subsets of $X$. Then $\mathcal{N}(K_n,\mathcal{U})$
	converges towards $\mathcal{N}(K,\mathcal{U})$, where 
	\[K = \bigcap_n K_n.\]
\end{lemma}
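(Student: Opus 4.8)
The plan is to prove this by showing two inequalities. Since $K \subseteq K_n$ for every $n$, any cover of $K_n$ by elements of $\mathcal{U}$ is also a cover of $K$, so $\mathcal{N}(K,\mathcal{U}) \le \mathcal{N}(K_n,\mathcal{U})$ for every $n$; moreover $\mathcal{N}(K_n,\mathcal{U})$ is a non-increasing sequence of integers (since $K_{n+1}\subseteq K_n$), hence it stabilizes to some limit $\ell \ge \mathcal{N}(K,\mathcal{U})$. It remains to prove the reverse inequality, namely that $\mathcal{N}(K,\mathcal{U}) \ge \ell$, i.e.\ that for large $n$ one cannot need strictly more elements of $\mathcal{U}$ to cover $K_n$ than to cover $K$.

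First I would fix a minimal subfamily $\mathcal{V}\subseteq\mathcal{U}$ that covers $K$, so $\card{\mathcal{V}} = \mathcal{N}(K,\mathcal{U})$. The key step is to argue that $\mathcal{V}$ in fact covers $K_n$ for all sufficiently large $n$. Suppose not: then for infinitely many $n$ there is a point $x_n \in K_n \setminus \bigcup_{v\in\mathcal{V}} v$. Since the $K_n$ are decreasing and $K_1$ is compact, I would extract a convergent subsequence $x_{n_j}\to x$. Because the sets $\bigcup_{v\in\mathcal V}v$ form a fixed \emph{open} set whose complement $C = X\setminus\bigcup_{v\in\mathcal V}v$ is closed, and each $x_{n_j}\in C$, the limit satisfies $x\in C$, so $x\notin\bigcup_{v\in\mathcal{V}}v$. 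On the other hand, for any fixed index $m$, all but finitely many of the $x_{n_j}$ lie in $K_m$ (as $K_{n}\subseteq K_m$ once $n\ge m$), and $K_m$ is closed, so $x\in K_m$; since this holds for every $m$, we get $x\in\bigcap_m K_m = K$. Thus $x\in K\setminus\bigcup_{v\in\mathcal V}v$, contradicting that $\mathcal V$ covers $K$.

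Hence $\mathcal{V}$ covers $K_n$ for all large $n$, which gives $\mathcal{N}(K_n,\mathcal{U}) \le \card{\mathcal{V}} = \mathcal{N}(K,\mathcal{U})$ for such $n$. Combined with the first inequality this forces $\mathcal{N}(K_n,\mathcal{U}) = \mathcal{N}(K,\mathcal{U})$ for all sufficiently large $n$, and in particular the stabilized limit $\ell$ equals $\mathcal{N}(K,\mathcal{U})$, establishing convergence.

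The main obstacle, and the only place where compactness is genuinely used, is the step extracting the limit point $x$ and verifying simultaneously that it lies in the intersection $K$ and avoids the open cover $\bigcup_{v\in\mathcal V}v$. The argument is essentially a diagonal/nested-compactness statement: one must be careful that the finiteness of $\mathcal{U}$ (so that $\mathcal{V}$ is a \emph{fixed finite} subfamily independent of $n$) is what allows the complement $C$ to be a single closed set across all $n$, rather than letting the offending points escape to different parts of the cover. Everything else is routine monotonicity of integer-valued sequences.
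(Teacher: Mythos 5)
Your proof is correct and follows essentially the same route as the paper's: both reduce to the easy monotone inequality, then use compactness to extract a limit point $x\in K$ lying in the closed complement of the union of a size-$\mathcal{N}(K,\mathcal{U})$ subfamily, contradicting that this subfamily covers $K$. The only cosmetic difference is that you fix one minimal cover $\mathcal{V}$ of $K$ at the outset, whereas the paper phrases the contradiction hypothesis over all subfamilies of that cardinality; the underlying argument is identical.
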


\begin{proof}
	Clearly for every $n$, $\mathcal{N}(K_n,\mathcal{U}) \ge \mathcal{N}(K,\mathcal{U})$ and the sequence 
	$n \mapsto \mathcal{N}(K_n,\mathcal{U})$ is non-increasing. It is thus sufficient to prove 
	that for some $n$, $\mathcal{N}(K_n,\mathcal{U}) = \mathcal{N}(K,\mathcal{U})$. Let us assume that 
	for every subset $\mathcal{V}$ of $\mathcal{U}$ of cardinality $k\defeq \mathcal{N}(K,\mathcal{U})$  and for every $n$, there exists some point $x_n$ of $K_n$ outside of the set
	\[\mathcal{E}(\mathcal{V}) \defeq \bigcup_{u \in \mathcal{V}} u.\]

Fix any $\mathcal{V}\subset \mathcal{U}$ of cardinality $k\defeq \mathcal{N}(K,\mathcal{U})$ and  $x_n\in K_n$ as above.
 By compactness, there exists a subsequence of $(x_n)$ which converges towards some $x \in X$ and clearly $x \in K$. Since $\mathcal{E}(\mathcal{V})$ is open, its complement is closed. As a consequence, $x \in X\setminus \mathcal{E}(\mathcal{V})$. 
	Hence $K$ is not contained in $\mathcal{E}(\mathcal{V})$. Since it is true for every 
	$\mathcal{V}$, this means that $\mathcal{N}(K,\mathcal{U})  > k$ which is a contradiction. 
\end{proof}

\begin{theorem}\label{thm:entropy.zero.gen}
	Any invertible finitely positively expansive system $(X,f)$ has zero entropy. 
\end{theorem}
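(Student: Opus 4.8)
## Proof Proposal

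The plan is to bound the number of $(n,\eps')$-separated points using the finite positive expansiveness, and then invoke Theorem~\ref{thm:separated.sets}. The key intuition is this: if $(X,f)$ is positively $N$-expansive with constant $\eps$, then knowing the forward orbit of a point up to scale $\eps$ determines it up to $N$ choices. Since $f$ is invertible, a separated set that is spread out over a long window $\co0n$ cannot grow exponentially, because the separating coordinates can be ``absorbed'' into the forward-only expansiveness bound after shifting.

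First I would fix the expansiveness constant $\eps$ and work with a separation scale $\eps'<\eps/2$ (or some comparable fraction). Given an $(n,\eps')$-separated set $E$, I want to control $\card{E}$ independently of $n$, or at least show $\frac1n\log s(n,\eps')$ tends to $0$. The central device is to exploit invertibility: apply $f^{-j}$ to re-center the window. Concretely, for a point $x$, the set of points whose forward orbit stays $\eps$-close to that of $x$ for \emph{all} $t\in\N$ has cardinality at most $N$ (this is $W_\eps(x)$). The obstacle is that separation only requires a single coordinate $i<n$ where orbits differ by more than $\eps'$, which is much weaker than forward-asymptotic closeness. So I must convert a ``some coordinate in a finite window separates'' statement into the ``all forward coordinates'' condition that $W_\eps$ controls.

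The bridge I would use is a pigeonhole/refinement argument combined with Lemma~\ref{lemma.decreasing.compacts}. Consider the decreasing sequence of compact sets obtained by iterating preimages; more precisely, I expect the cleanest route is to pick a finite open cover $\mathcal U$ of $X$ by sets of diameter below $\eps'$, and study $\mathcal N\!\left(\bigvee_{i=0}^{n-1}f^{-i}\mathcal U\right)$ directly. The point is that two points lying in the same cell of $\bigvee_{i=0}^{n-1}f^{-i}\mathcal U$ agree up to $\eps'$ on all of $\co0n$, so they cannot be $(n,\eps')$-separated. By invertibility, one can translate a cell of the forward join into a condition on a bi-infinite refinement, and Corollary~\ref{cor.bounded.fibers} (bounded-to-one) keeps the preimage branches under control. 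The heart of the matter will be showing that the number of nonempty cells $\mathcal N\!\left(\bigvee_{i=0}^{n-1}f^{-i}\mathcal U\right)$ grows sub-exponentially: using that $W_\eps(x)$ is finite uniformly bounded by $N$, a point's entire forward itinerary through a sufficiently fine cover pins it down to one of at most $N$ points, so the total number of realizable forward itineraries of length $n$ is at most polynomial (indeed eventually constant) in $n$ once one passes to the limit via Lemma~\ref{lemma.decreasing.compacts}.

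The step I expect to be the main obstacle is making the ``finite window'' quantifier interact correctly with the ``all $t\in\N$'' quantifier in the definition of $W_\eps$. Finite positive expansiveness bounds the number of points sharing an \emph{entire} forward orbit to within $\eps$, whereas entropy counts distinguishability over \emph{finite} windows. Bridging these requires a compactness argument: I would take an intersection over $n$ of the itinerary-fiber sets and use Lemma~\ref{lemma.decreasing.compacts} to show the refinement count stabilizes, with the stable value governed by the uniform bound $N=\card{W_\eps(\cdot)}$. Invertibility is essential here, since it lets me shift windows and apply the forward bound $W_\eps$ at re-centered base points rather than only at time $0$; without it the preimages need not be finite and the argument collapses. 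Once the count is shown to be bounded (or sub-exponential), Theorem~\ref{thm:separated.sets} immediately yields $h(X,f)=0$.
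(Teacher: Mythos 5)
Your overall strategy is essentially the paper's: fix a finite open cover $\mathcal U$ of mesh smaller than $\eps/2$, count cells of $\bigvee_{i=0}^{n-1}f^{-i}\mathcal U$, use Lemma~\ref{lemma.decreasing.compacts} to bridge the finite-window counts with the infinite-time condition defining $W_\eps$, and use invertibility to control preimages. However, there is a genuine gap at the central quantitative step. You claim that since $\card{W_\eps(x)}\le N$ uniformly, ``the total number of realizable forward itineraries of length $n$ is at most polynomial (indeed eventually constant) in $n$.'' This is a non sequitur: expansiveness bounds the fibers of the map sending a point to its \emph{infinite} forward itinerary, and this says nothing about how many \emph{finite} itineraries of length $n$ occur. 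In fact ``eventually constant'' is false for every infinite example: Sturmian shifts are invertible, infinite, and positively $2$-expansive (Section~\ref{section:shifts}), yet for a generating cover the counts $\mathcal N\bigl(\bigvee_{i=0}^{n-1}f^{-i}\mathcal U\bigr)$ grow without bound (linearly, like the word complexity $n+1$); they must grow for any infinite sensitive system. Even the weaker ``polynomial'' claim does not follow from your argument, and whether complexity is always at most linear is raised as an open question in the paper's closing comments. So as written, the bridge from the $W_\eps$ bound to subexponential counting collapses.

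What Lemma~\ref{lemma.decreasing.compacts} plus expansiveness plus invertibility actually gives is weaker but sufficient, and this is the paper's claim: for each fixed $n$ there exists $m_n$ such that for all $m\ge m_n$ and every cell $u$ of $\overline{\bigvee_{i=0}^{m-1}f^{-i}\mathcal U}$, the set $f^{-n}(u)$ can be covered by at most $N$ cells of $\bigvee_{i=0}^{n-1}f^{-i}\mathcal U$. (Here invertibility is used to say that $f^{-n}$ of a set of at most $N$ points still has at most $N$ points, and the compactness lemma is what lets you pass from the infinite intersection, where $\card{W_\eps}\le N$ applies, down to a sufficiently long finite window $m\ge m_n$.) This yields only a multiplicative recursion $\mathcal N\bigl(\overline{\mathcal U^{m+n}}\bigr)\le N\,\mathcal N\bigl(\overline{\mathcal U^{m}}\bigr)$, hence $\mathcal N\bigl(\overline{\mathcal U^{m+kn}}\bigr)\le N^k\,\mathcal N\bigl(\overline{\mathcal U^{m}}\bigr)$: the counts may still grow, but by a factor of at most $N$ per $n$ steps, so $h(\overline{\mathcal U},X,f)\le \log(N)/n$. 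Since $n$ is arbitrary, the entropy of every such cover is zero, and Lemma~\ref{lemma.entropy.closed.cover} finishes the proof. If you repair your argument by replacing ``bounded/polynomial counting'' with this growth-factor estimate, the rest of your outline (whether phrased via covers or via separated sets and Theorem~\ref{thm:separated.sets}) goes through.
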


\begin{proof}
	Let $N$ be such that the system $(X,f)$ is positively $N$-expansive, and $\eps >0$ such that for every $x$, $|W_{\eps}(x)| \le N$. 
	\begin{enumerate}
	    \item \textbf{Notation simplification.} 	In this proof, in order to simplify a bit the formula, let us set 
    \[\mathcal{U}^n \defeq \bigvee_{i=0}^{n-1}f^{-i}\mathcal{U}\]
    for every $n$ and all cover $\mathcal{U}$.
\item \textbf{Claim.} Let us consider any open cover $\mathcal{U}$ whose every element
has diameter smaller than $\eps/2$, and fix any integer $n \ge 1$. 
	We claim that there exists some $m_n$ such that for every $m \ge  m_n$, and every sequence $u_0, \ldots u_m$ of elements of $\overline{\mathcal{U}}$, we have: 
	\[
	\mathcal{N}(f^{-n} (u_0) \cap f^{-1-n}(u_1) \cap \ldots \cap f^{-m-n}(u_m),\mathcal{U}^n) \le N.
	\]
	\item \textbf{Proof of the claim.} If the claim were false, there would exist an integer $n$ and an infinite sequence $v_0, v_1 , ... \in \overline{\mathcal{U}}$ 
	such that for every $m_n$ there is $m>m_n$ such that
	
	\[\mathcal{N}(f^{-n} (v_0) \cap f^{-1-n}(v_1) \cap \ldots \cap f^{-m-n}(v_m),\mathcal{U}^n) \ge N+1.\]
	By Lemma~\ref{lemma.decreasing.compacts}, we have 
	\[\mathcal{N}(K,\mathcal{U}^n) \ge N+1,\]
	where 
	\[K = f^{-n} \left(\bigcap_{i \in\N} f^{-i} (v_i)\right).\]
	However, by positive $N$-expansiveness and because $f$ is a homeomorphism, $|K| \le N$. Hence $\mathcal{N}(K,\mathcal{U}^n) \le N$, which is a contradiction. This proves the claim.
	\item \textbf{Consequences.} We deduce from the above claim that for every $m \ge m_n$, and for any element $u$ of $\overline{\mathcal{U}^m}$, there exists a set of $N$ elements of $\mathcal{U}^n$ which cover $f^{-n}(u)$, and thus there are $N$ elements of $\overline{\mathcal{U}^n}$ which also cover $f^{-n}(u)$.
	This implies: 
	
	\[\mathcal{N}(\overline{\mathcal{U}^{n+m}}) \le N \cdot \mathcal{N}(\overline{\mathcal{U}^m}).\]
	
	Since this is satisfied for every $m \ge m_n$, for every $k \ge 1$, we have: 
	\[
	\mathcal{N}(\overline{\mathcal{U}^{m+kn}}) \le N^k \mathcal{N}(\overline{\mathcal{U}^m}).
	\] 
	With an application of the logarithm function: 
	\[\frac{\log(\mathcal{N}(\overline{\mathcal{U}^{m+kn}}))}{m+kn} \le \frac{k}{m+kn}\log(N) + \frac{\log(\mathcal{N}(\overline{\mathcal{U}^m}))}{m+kn}.\]
	By taking $k \rightarrow +\infty$, we obtain:
	\[h(\overline{\mathcal{U}}) \le \frac{\log(N)}{n}.\]
	
	Since $n$ was taken arbitrary, $h(\overline{\mathcal{U}}) = 0$. 
		As a direct consequence of Lemma~\ref{lemma.entropy.closed.cover}, the topological entropy 
	of $(X,f)$ is equal to zero. 
	\popQED\end{enumerate}
\end{proof}
{
In contrast, finitely positively expansive systems which are not homeomorphisms can have arbitrary entropy value: consider for instance one-sided shifts, which are all positively $1$-expansive.
}

\subsection{Shadowing property}\label{section.shadowing}

It is known that an invertible dynamical system which is positively expansive is finite. The situation is 
different for the property of finite positive expansiveness. For instance we will see some counter-examples in Section~\ref{section:shifts} (in the context of symbolic dynamics). On the other hand, we can recover the same result as for positive expansiveness 
by adding the constraint of having \textit{shadowing property}.

\begin{definition}
For a dynamical system $(X,f)$ on a metric space $(X,d)$, an $\epsilon$-\textbf{pseudo-orbit} is a sequence $(x_n)_{n \in\N}$ such that for every $n \in\N$, 
\[d(f(x_n),x_{n+1}) < \epsilon.\]

If for $p,q\in X$ there is $m>0$ and a finite sequence $(x_0,\ldots,x_m)$ such that for each $n=0,\ldots,m-1$
\[d(f(x_n),x_{n+1}) < \epsilon.\]
and $x_0=p$, $x_m=q$ then we say that there is an \textbf{$\epsilon$-chain} from $p$ to $q$. If for every $\epsilon>0$ there is an $\epsilon$-chain between any two points $p,q$ then the system $(X,f)$ is chain-transitive. 

\end{definition}

\begin{definition}
A dynamical system $(X,f)$ has the \textbf{shadowing property} when for every $\epsilon > 0$, there exists some $\delta > 0$ such that every $\epsilon$-pseudo-orbit $(x_n)$ is $\delta$-\textbf{shadowed} by some $x \in X$ such that for every $n \in\N$, 
\[d(x_n,f^n(x)) < \epsilon.\]
\end{definition}

The following theorem 
is contained in Theorem~{3.7} in \cite{LOWM}: 
\begin{theorem}\label{thm:sensitive.point}
A dynamical system with the shadowing property has positive entropy if and only if it has a non-equicontinuous point.
\end{theorem}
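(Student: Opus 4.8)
I would prove the two implications separately, noting that only the forward-building direction uses the shadowing hypothesis. For \emph{positive entropy implies a non-equicontinuous point} I would argue in contrapositive form and without shadowing: if every point of $X$ is equicontinuous then, by compactness, the system is uniformly equicontinuous, and such a system has zero topological entropy. Indeed, for any open cover $\mathcal U$ by sets of diameter less than the equicontinuity scale, the joins $\bigvee_{i=0}^{n-1}f^{-i}\mathcal U$ refine a fixed cover, so $\mathcal N\big(\bigvee_{i=0}^{n-1}f^{-i}\mathcal U\big)$ stays bounded in $n$ and $h(\mathcal U,X,f)=0$. Hence $h(X,f)>0$ forces some $x$ to fail $\eps$-stability for some $\eps>0$, i.e. a non-equicontinuous point.

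For \emph{a non-equicontinuous point implies positive entropy} the plan is to convert local instability into a shadowing horseshoe. Fix $x$ non-equicontinuous, witnessed by $\eps>0$: for every $\delta>0$ there is $y$ with $d(x,y)<\delta$ and a first time $N$ with $d(f^N(x),f^N(y))\ge\eps$. A short compactness argument, using that such separation can only occur at times $N\to\infty$ as $\delta\to0$ (otherwise a fixed $N$ together with $y_k\to x$ would contradict continuity of $f^N$), lets me relocate the instability into a recurrent region: I would produce a point $w$ lying in an $\omega$-limit set, which is internally chain transitive, at which the same $\eps$-separation persists. The point of passing to $\omega_f$ is that there the separating behaviour recurs, so the branching below can be iterated.

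With this in hand I would build the horseshoe. Let $\delta>0$ be the shadowing constant for tracing precision $\eps/4$, shrunk so that $\delta<\eps/4$. The instability at $w$ supplies two genuine orbit segments of a common length $M$, starting within $\delta$ of $w$ and ending at least $\eps$ apart; these are the two ``letters''. Given $u\in\{0,1\}^k$ I would concatenate the $k$ corresponding segments, inserting after each a bounded $\delta$-chain back into the $\delta$-ball about $w$ (available by internal chain transitivity). The result is a $\delta$-pseudo-orbit, hence $\eps/4$-traced by a genuine point $z_u$. Two words differing at coordinate $j$ give pseudo-orbits that are $\eps$-separated at the $j$-th decision time, so after subtracting the two tracing errors $d(f^{t}(z_u),f^{t}(z_{u'}))\ge\eps/2$ there. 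Thus the $2^k$ points $\{z_u\}$ form a $(T,\eps/2)$-separated set with $T=kL$, where $L$ is the uniform length of one block-plus-return; so $s(T,\eps/2)\ge 2^{k}$ and Theorem~\ref{thm:separated.sets} yields $h(X,f)\ge(\log 2)/L>0$.

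The main obstacle is precisely the \emph{return step}: a single non-equicontinuous point need not let one re-enter the branching region, so the branching cannot be iterated and no entropy is produced (for instance a north--south map has a non-equicontinuous point, has shadowing, yet has zero entropy, because after one branch the orbit falls into a different chain component). Resolving this is exactly where both shadowing and the global strength of the instability are essential: I would have to verify that the instability can genuinely be placed inside a single internally chain transitive set and that shadowing supplies uniformly bounded return chains, so that one $\eps$, one $\delta$, and one block length $L$ serve all words $u$ and all $k$ simultaneously, with the fixed $\eps/4$ error budget never eroding the $\eps$-separation at the decision times.
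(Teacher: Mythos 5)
Your first direction (positive entropy implies a non-equicontinuous point, via uniform equicontinuity and bounded growth of cover joins) is correct, and indeed needs no shadowing. The converse, however, cannot be completed, and your own parenthetical example is the reason: the north--south map (a circle or interval homeomorphism with one repelling fixed point $N$ and one attracting fixed point $S$) really does have the shadowing property -- realized with hyperbolic fixed points it is Morse--Smale, hence structurally stable, hence shadowing, and shadowing is a conjugacy invariant; alternatively, every $\delta$-pseudo-orbit splits into a phase lingering within $O(\delta)$ of $N$, a transit of bounded duration, and absorption near $S$, each of which is traced with error $O(\delta)$ by a point chosen on the escaping side of $N$, exponentially close, so that its escape is synchronized with that of the pseudo-orbit. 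It also has a non-equicontinuous point, namely $N$, and it has zero entropy. So it satisfies the hypotheses of the statement and violates its conclusion: what you wrote down is not an obstacle to your technique, it is a counterexample to the statement as quoted, and no choice of $\eps$, $\delta$ and return chains can get around it. The precise step of your plan that fails is the ``relocation'' step: for the north--south map the only $\omega$-limit sets are the singletons $\{N\}$ and $\{S\}$, which are internally chain transitive but carry no instability whatsoever; the non-equicontinuity at $N$ is produced by orbits that leave every neighbourhood of $N$ for good, and it cannot be placed inside any chain transitive set.

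The conclusion you should have drawn is that the statement, as reproduced here, is missing a hypothesis, rather than promising a verification that cannot succeed. The paper itself offers no proof -- it only points to Theorem~3.7 of \cite{LOWM} -- and the correct result must carry a recurrence-type assumption (chain transitivity, transitivity or non-wandering, or a formulation at the level of chain classes); note that the missing hypothesis also leaves a gap in the paper's own use of this statement in the proof of Theorem~\ref{theorem.shadowing}, where it is applied to a system that need not be chain recurrent. Under such an assumption your horseshoe is exactly the right argument and closes up: transitivity plus shadowing yields the dichotomy ``equicontinuous or sensitive'' (Moothathu), so one non-equicontinuity point upgrades to sensitivity; chain transitivity plus compactness gives, for each $\delta$, a uniform bound $M(\delta)$ on the length of $\delta$-chains between any two points, hence uniformly bounded returns to the branching ball; and tracing the coded pseudo-orbits at precision $\eps/4$ produces $2^k$ points that are $(kL,\eps/2)$-separated, giving entropy at least $\log 2/L>0$. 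So: easy direction correct, converse unprovable as stated, and the right deliverable was to prove the converse under an explicit recurrence hypothesis while flagging the north--south map as showing that some such hypothesis is indispensable.
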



Now we can recover \cite[Theorem~G]{Vieitez}, with another proof, based on relations between shadowing property and entropy.
\begin{theorem}\label{theorem.shadowing}
Any invertible finitely positively expansive system with the shadowing property is finite.
\end{theorem}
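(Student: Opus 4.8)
The plan is to assemble the statement from three results already established in the excerpt, chaining them so that the heavy lifting is done by the entropy computation and the shadowing dichotomy. Concretely, I would show that invertibility plus finite positive expansiveness forces zero entropy, that shadowing then upgrades zero entropy to equicontinuity, and that equicontinuity together with finite positive expansiveness forces finiteness.

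First I would invoke Theorem~\ref{thm:entropy.zero.gen}: since $(X,f)$ is invertible and finitely positively expansive on the compact metric space $X$, its topological entropy vanishes, $h(X,f)=0$. Next I would use the shadowing property together with Theorem~\ref{thm:sensitive.point} in its contrapositive form. That theorem states that a system with the shadowing property has positive entropy if and only if it has a non-equicontinuous point; having just established $h(X,f)=0$, I conclude that $(X,f)$ has no non-equicontinuous point. By the definition of equicontinuity given before Proposition~\ref{p:dicho}, this is exactly the assertion that $(X,f)$ is an equicontinuous system.

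Finally I would close the argument by applying Corollary~\ref{c:equicontinuous}\eqref{i:eq}: a finitely positively expansive equicontinuous system is finite. Hence $X$ is finite, which is the desired conclusion. (Alternatively, one could bypass the packaged corollary and argue directly: equicontinuity on a compact space gives a uniform $\delta$ for the expansiveness constant $\eps$, so every $\ball[\delta]{x}$ sits inside the finite set $W_\eps(x)$, forcing every point to be isolated and hence $X$ finite by compactness, as in the proof of Proposition~\ref{p:dicho}.)

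The real content here is entirely inherited from the cited theorems, so the proof is short; the only points requiring care are bookkeeping ones. I would double-check that the hypotheses of Theorem~\ref{thm:sensitive.point} are genuinely met—in particular that the standing compactness of $(X,d)$ is what that theorem needs—and that ``absence of a non-equicontinuous point'' is being matched precisely to the paper's definition of an equicontinuous \emph{system} rather than merely to pointwise equicontinuity at a single point. These definitional alignments, rather than any substantial estimate, are the only place the argument could slip, so I expect them to be the main (mild) obstacle.
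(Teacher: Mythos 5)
Your proof is correct and follows exactly the same three-step route as the paper's own proof: Theorem~\ref{thm:entropy.zero.gen} gives zero entropy, Theorem~\ref{thm:sensitive.point} (via shadowing) upgrades this to equicontinuity, and Corollary~\ref{c:equicontinuous}\eqref{i:eq} concludes finiteness. The definitional bookkeeping you flag is handled the same way in the paper, so there is nothing to add.
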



\begin{proof}
    From Theorem~\ref{thm:entropy.zero.gen}, such a system must have entropy $0$.
    From Theorem~\ref{thm:sensitive.point}, this implies that the system is equicontinuous.
    We can conclude by Corollary~\ref{c:equicontinuous}\eqref{i:eq}.
\end{proof}


\section{Dynamical systems on one-dimensional continua}\label{section:continua}

It is a well known fact that the interval has no positively expansive dynamical system (e.g. see \cite{ALM,AH}). In this section, we prove that it also has no finitely positively expansive one. On the other hand, we know that there are positively expansive systems on the circle ($z^n$ maps for $n \notin \{-1,0,1\}$). This observation naturally leads to the following questions. 

\begin{question}
Can we find, for every $n\ge 1$, a positively $(n+1)$-expansive map of the circle which is not positively $n$-expansive?
\end{question}

\begin{question}\label{question:continua}
  Which one-dimensional continua admit finitely positively expansive systems?
\end{question}

We leave the first question open, but provide some results in the direction of the second one. 

\subsection{Systems over the interval}
\label{section:interval}

The aim of this section is to prove Theorem~\ref{theorem:interval}.
For the remainder of this section, we set $I \defeq[0,1]$. 

A dynamical system $(X,f)$ is said to be \textit{mixing} when there exists an integer $N$ such that 
for every non-empty open subsets $u,v$ of $X$, and for every $n \ge N$, $f^n (u) \cap v \neq \emptyset$.

\begin{proposition}\label{p:sensmix}
  For every sensitive system $(I,f)$ over the interval, there exists a non-degenerate subinterval $J\subset I$ and some $k>0$ such that $f^k\restr J$ is mixing.
  Moreover, it has positive entropy.
\end{proposition}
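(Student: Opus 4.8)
The plan is to exploit the rigidity of one-dimensional dynamics, reducing the statement to well-understood structural facts about continuous maps of $I$. First I would reformulate sensitivity in a form adapted to intervals. If $(I,f)$ is $c$-sensitive, then for every nondegenerate subinterval $J\subset I$ there is an $n$ with $|f^n(J)|>c$: indeed $f^n(J)$ is again an interval, and $c$-sensitivity at an interior point of $J$ forces two points of $J$ to be separated by more than $c$ under some $f^n$, so the image interval has length exceeding $c$; conversely, such uniform growth of lengths forbids any $c$-stable point. Call this property $(\star)$; it is the engine of the whole argument, and its verification is routine.

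Second, and this is the heart of the matter, I would use $(\star)$ to localize the dynamics onto a cycle of intervals carrying transitive behaviour. Concretely, the goal is to produce a nondegenerate subinterval $L$ and some $k\ge1$ with $f^k(L)=L$, the intervals $L,f(L),\ldots,f^{k-1}(L)$ having pairwise disjoint interiors, and $f^k\restr L$ transitive. I would obtain this from the spectral decomposition of interval maps (see \cite{ALM}): fixing a finite partition of $I$ into intervals of length $<c/2$ and considering the associated transition (Markov) graph, property $(\star)$ forbids this graph from being eventually trivial and forces a nontrivial recurrent component, which, after passing to maximal invariant subintervals and closures, yields such a cycle of intervals on which the induced map is transitive. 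Alternatively, one can run a turbulence/horseshoe construction directly from $(\star)$ and then renormalize onto the maximal transitive periodic interval.

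Third, I would upgrade transitivity to mixing and read off positive entropy. A transitive continuous interval map is either mixing, or its domain splits into two subintervals interchanged by the map with the square mixing on each half (the standard dichotomy for transitive interval maps). Hence, after replacing $k$ by $2k$ and $L$ by one of the two halves, I obtain a nondegenerate $J\subset L$ with $f^{2k}\restr J$ mixing, which is the first assertion. For the ``moreover'', I would invoke Blokh's theorem that a transitive interval map has topological entropy at least $\tfrac12\log 2>0$; since $f^{2k}\restr J$ is mixing, hence transitive, it has positive entropy, and because entropy is monotone under restriction to a subsystem and satisfies $h(X,f^m)=m\,h(X,f)$, the ambient system $(I,f)$ has positive entropy as well.

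The step I expect to be the main obstacle is the second one: passing from the purely global ``every interval eventually grows'' statement to a genuinely localized, recurrent cycle of intervals. Uniform expansion by itself does not prevent the stretching from being dispersed across $I$ with no interval ever returning near itself, and ruling this out is exactly where the one-dimensional structure theory (Blokh's spectral decomposition, or a careful horseshoe construction from the Markov graph of a sufficiently fine partition) is indispensable; it is the only place where I would rely on a nontrivial external result rather than on elementary arguments.
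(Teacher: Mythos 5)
Your proposal is correct and follows essentially the same route as the paper: both arguments reduce sensitivity to Blokh's structure theorem producing a non-degenerate periodic interval on which the return map is transitive (the paper cites this directly from \cite{blokh} and \cite{Ruette}, where you sketch how it follows from the spectral decomposition), then apply the standard transitive-implies-mixing-after-doubling dichotomy, and finally invoke Blokh's positive-entropy bound for transitive interval maps. The only difference is presentational: your intermediate property $(\star)$ and the Markov-graph discussion amount to an outline of the cited structure theory rather than a genuinely different argument.
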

\begin{proof} 
  From \cite[Theorem~1.3]{blokh} (see \cite[Proposition~2.40]{Ruette} for a proof), every sensitive system of the interval admits a non-degenerate (so-called \emph{periodic}) interval $J'$ and $p>0$ such that $f\restr{\bigsqcup_{0\le i<p}f^i(I)}$ is a transitive subsystem.
  This implies that $f^p\restr I$ is a transitive system over interval $J$.
  Moreover, \cite[Propositions~2.16 and~2.17]{Ruette} imply that $f^{2p}\restr{J}$ is mixing, for some non-degenerate interval $J\subset I$.
  By \cite[Lemma~1.2]{blokh} (see \cite[Proposition~4.70]{Ruette}), any transitive system of some non-degenerate interval has positive entropy.
\end{proof}


\begin{remark}\label{r:fnposexp}
  Let $(X,f)$ be a dynamical system, $n\ge 1$, $\eps>0$, $J\subset X$ with cardinality $n+1$, and $t$ such that:
  \[\forall j\in \cc0{t},\diam f^j(J) <\eps
  \text{ and }\card{f^t(J)}=1.\]
Then $(X,f)$ is not positively $n$-expansive with constant $\eps$.
Indeed, $J\subseteq W_\eps(x)$, for any $x\in J$.
\end{remark}

  

{ 
\begin{lemma}\label{lem.interm}
  Let us consider an interval system $f \colon I \to I$, and $c$ in the interior of $f(I)$.
  For every $\eps>0$, there exists an interval $J \subset I$ such that $\diam J < \eps$ such that $c$ is in the interior of $f(J)$.
\end{lemma}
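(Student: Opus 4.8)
The plan is to reduce the statement to producing a single point $p\in I$ at which $f$ genuinely crosses the value $c$, and then to take $J$ to be an arbitrarily short interval around $p$. First I would use that $c$ lies in the interior of the interval $f(I)$ to produce $a,b\in I$ with $f(a)<c<f(b)$: since a whole neighborhood $(c-\eta,c+\eta)$ of $c$ is contained in $f(I)$, such preimages exist.

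Next, introduce the two open sets $A\defeq f^{-1}((-\infty,c))$ and $B\defeq f^{-1}((c,+\infty))$, which are nonempty since $a\in A$ and $b\in B$. The key reduction is the following: if $p\in\overline A\cap\overline B$, then every interval $J\subset I$ containing $p$ meets both $A$ and $B$, so $f(J)$ contains a value strictly below $c$ and a value strictly above $c$. As $J$ is connected, $f(J)$ is an interval, so by the intermediate value property it contains an open neighborhood of $c$, that is $c\in\Int f(J)$. Since $p$ admits arbitrarily short neighborhoods $J\subset I$, this yields the conclusion for every $\eps>0$.

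It then remains to produce such a crossing point $p$. Working inside $[\min(a,b),\max(a,b)]$, I would set $p\defeq\sup\sett{x}{f(x)<c}$ taken over this interval. A standard supremum-plus-continuity argument gives $f(p)=c$, shows $p\in\overline A$, and shows that $f(x)\ge c$ for all $x$ immediately to the right of $p$; it then suffices to find points with $f>c$ arbitrarily close to $p$ on that side in order to conclude $p\in\overline B$.

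The step I expect to be the main obstacle is exactly this last one: if $f$ were identically equal to $c$ on a subinterval adjacent to $p$ (a plateau at height $c$), then $\overline A$ and $\overline B$ could be disjoint and no short $J$ would work, since $f(J)$ would only touch $c$ from one side. The crux is therefore to forbid $f^{-1}(c)$ from containing an interval, which forces a point with $f>c$ in every right-neighborhood of $p$. This is where the hypotheses of our setting enter: the systems under consideration are bounded-to-one by Corollary~\ref{cor.bounded.fibers}, so each fiber $f^{-1}(c)$ is finite and in particular has empty interior; this rules out a plateau and completes the argument.
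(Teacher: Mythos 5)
Your reduction to a crossing point $p\in\overline A\cap\overline B$ and the sup-plus-continuity construction are sound, and they are essentially the paper's own idea read in contrapositive: the paper's sets $\delta^{+},\delta^{-}$ consist exactly of those preimages of $c$ that are \emph{not} crossing points, and its ad absurdum hypothesis is used to show $\delta^{+}\cup\delta^{-}=f^{-1}(c)$, i.e.\ that no crossing point exists. The divergence is in the step you yourself single out as the crux: the plateau case. Your resolution --- invoking Corollary~\ref{cor.bounded.fibers} to make $f^{-1}(c)$ finite --- is a genuine gap with respect to the statement: the lemma is asserted for an \emph{arbitrary} interval system, with no expansiveness hypothesis, so bounded-to-one-ness is not available. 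Nor can it be imported from the context where the lemma is used: in the proof of Lemma~\ref{l:mix} it is applied to $f^t$ with $f$ mixing, and that proof is a direct construction of witnesses against positive $(2^k-1)$-expansiveness for every $k$, not an argument by contradiction in which finite positive expansiveness could be assumed. So, as written, your argument proves a weaker lemma than the stated one, and one that does not plug into the paper's application without restructuring Lemma~\ref{l:mix}.

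That said, your diagnosis of the obstacle is more accurate than the paper's own treatment of it. Without some hypothesis excluding plateaus, the stated lemma is simply false: for $c\in(0,1)$, let $f$ increase from $0$ to $c$ on $[0,1/3]$, be constant equal to $c$ on $[1/3,2/3]$, and increase from $c$ to $1$ on $[2/3,1]$; then $c\in\Int f(I)$, yet every interval $J$ with $\diam J<1/3$ satisfies $f(J)\le c$ or $f(J)\ge c$, so $c\notin\Int f(J)$. The paper's proof stumbles precisely at this point: in its case $y=z$ it produces a neighborhood $J$ of $z$ with $f(J)=\{c\}$ and claims this ``contradicts the hypothesis ad absurdum,'' but it does not --- $f(J)=\{c\}$ is perfectly compatible with $c\notin\Int f(J)$. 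The correct repair is not finite positive expansiveness but the hypothesis actually available in the application: that $f$ is constant on no non-degenerate subinterval (equivalently here, that $f^{-1}(c)$ has empty interior). This is automatic for any power of a mixing map --- a plateau would make some image $f^n(u)$ of a non-empty open set a singleton, contradicting mixing --- so Lemma~\ref{l:mix} is unaffected, and with this hypothesis added your crossing-point argument completes the proof exactly as you wrote it.
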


\begin{proof}
Let us assume ad absurdum that there exists $\eps > 0$ such that for all $J \subset I$ such that $\diam J < \eps$, 
$c$ is not in the interior of $f(J)$.  This implies that for all $x \in f^{-1}(c)$, there exists some $J \subset I$ such that 
$x \in J$ and $f(J) \le c$ or $f(J) \ge c$. Let us denote by $\delta^{+}$ (resp. $\delta^{-}$) the set of $x \in f^{-1}(c)$
such that there exists some $J$ containing $x$ such that $f(J) \ge c$ (resp. $f(J) \le c$). If any of the sets $\delta^{+},\delta^{-}$ was equal to $f^{-1}(c)$, then we would have $f(I) \le c$ or $f(I) \ge c$ which was assumed to be false ($c$ is in the interior of $f(I)$). Without loss of generality, we can assume that there exists $x \in \delta^{-}, x' \in \delta^{+}$ such that $x < x'$.
Let us then denote by $y$ the supremum over the elements of $\delta^{-}$ which smaller than $x'$, and $z$ the 
infimum over the elements of $\delta^{+}$ which are greater than $y$. Then else $y < z$, and by the intermediate value theorem one can find some $x'' \in f^{-1}(c)$ such that $y < x'' < z$, and thus can't be in $\delta^{-}$ or $\delta^{+}$, which is impossible, 
or $y=z$ which means that there exists some $J$ containing $z$ in its interior such that $f(J) = \{c\}$, which contradicts the hypothesis ad absurdum. This concludes the proof.
\end{proof}

\begin{remark}\label{r:interval.preimage}
  For every interval continuous map $f \colon I \to I$, and every interval $J \subset f(I)$, there exists a subinterval $I'\subset I$ such that $f(I') = J$.
\end{remark}
}

\begin{lemma}\label{l:mix}
  If an interval system $f\colon I\to I$ is mixing, then it is not finitely positively expansive.
\end{lemma}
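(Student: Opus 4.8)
The plan is to exploit the combination of mixing with Remark~\ref{r:fnposexp}, which reduces the whole problem to producing, for any prescribed $n$ and any $\eps>0$, a finite set $J$ of cardinality $n+1$ whose forward iterates stay of diameter less than $\eps$ until they collapse to a single point. If I can manufacture such a configuration for arbitrarily large $n$, then $(I,f)$ fails to be positively $n$-expansive with constant $\eps$ for every $n$, and since finite positive expansiveness would require \emph{some} fixed $n$ and $\eps$ to work, this rules it out.

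First I would fix an arbitrary $\eps>0$ and an arbitrary target cardinality $n+1$, and cover $I$ by finitely many intervals of diameter less than $\eps$; mixing lets me reach any of these from any small subinterval in uniformly many steps. The key mechanism is that mixing of an interval map forces abundant branching of preimages: by Remark~\ref{r:interval.preimage}, any subinterval of $f(I)$ is the exact image of some subinterval, so I can pull back intervals freely, and Lemma~\ref{lem.interm} guarantees that any interior point $c$ of $f(I)$ is hit as an interior point of the image of an \emph{arbitrarily short} interval. I would use these two facts to build, by pulling back along a carefully chosen finite orbit segment, many disjoint short intervals that $f$ eventually maps onto a common interval, and hence a common point, after a bounded number of steps while never exceeding diameter $\eps$ along the way.

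Concretely, the strategy is to locate a point $c$ and a time $t$ such that $c$ has at least $n+1$ preimages under $f^t$ that can be confined to prescribed small intervals. Mixing produces, for a fixed target interval $V$ and source intervals $U_1,\dots,U_{n+1}$ chosen pairwise disjoint and of diameter less than $\eps$, common times $k$ with $f^k(U_i)\supseteq V$ for all $i$; choosing one point $x_i\in U_i$ with $f^k(x_i)$ equal to a fixed $c\in V$ gives a set $J=\{x_1,\dots,x_{n+1}\}$ with $f^k(J)=\{c\}$. The remaining task is to arrange that the intermediate iterates $f^j(J)$ for $0\le j\le k$ each have diameter less than $\eps$; this is where Lemma~\ref{lem.interm} and the freedom in Remark~\ref{r:interval.preimage} are combined, refining each $U_i$ to a short enough subinterval that its whole $f$-orbit of length $k$ stays inside small intervals, using continuity and compactness to get a uniform modulus. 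Applying Remark~\ref{r:fnposexp} to this $J$ then shows failure of positive $n$-expansiveness with constant $\eps$.

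The main obstacle I anticipate is controlling the diameters of all intermediate iterates \emph{simultaneously} while still guaranteeing the collapse onto a single point after a uniformly bounded time. Mixing hands me a meeting time $k$ for free, but that time depends on the chosen intervals, and as I shrink the source intervals to control intermediate diameters the required $k$ may grow, which is harmless for a fixed target $n$ but must be organized so that the diameter control and the collapse are not in tension. I would resolve this by first fixing the collapse time and common image via mixing, and only afterwards shrinking each source interval around a chosen preimage point using uniform continuity of $f,f^2,\dots,f^k$ so that the bounded orbit segment of each point stays $\eps$-close to its neighbors; since $k$ is fixed once $n$ and $\eps$ are fixed, this uniform-continuity argument closes the gap cleanly.
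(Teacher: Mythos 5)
Your reduction to Remark~\ref{r:fnposexp} is the right frame, and the collapse part of your construction is fine: mixing interval maps do have the covering property, so there is a common time $k$ with $f^k(U_i)\supseteq V$ for disjoint $U_1,\dots,U_{n+1}$, hence $n+1$ distinct points $x_i$ with $f^k(J)=\{c\}$. The genuine gap is the intermediate diameter control, and your proposed fix does not close it. Uniform continuity bounds $|f^j(x)-f^j(y)|$ only for points that are $\delta(k,\eps)$-close; but your points lie in \emph{pairwise disjoint} intervals, and ``shrinking each source interval around a chosen preimage point'' does not move the points $x_i$ at all --- it only controls orbits of points \emph{inside} each $U_i$ relative to $x_i$, not the mutual distances $|f^j(x_i)-f^j(x_{i'})|$, which is what $\diam f^j(J)<\eps$ requires. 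If instead you re-choose the points inside smaller intervals packed into a single $\delta(k,\eps)$-interval, the covering time for those smaller intervals grows, the modulus shrinks accordingly, and the circularity you yourself flagged reappears; in fact it cannot be broken. Concretely, for the tent map: $f^k$ has $2^k$ laps of length $2^{-k}$, with one preimage of $c$ per lap, so any three preimages of a common point under $f^k$ enclose a full lap and hence span at least $2^{-k}$; on the other hand any valid uniform-continuity modulus for $\{f^j:j\le k\}$ at scale $\eps<1$ is at most $\eps 2^{-k}<2^{-k}$, since $f^k$ has slope $\pm2^k$ on each lap. So no three points identified by $f^k$ can ever lie within the modulus: your one-shot scheme provably cannot produce even $n+1=3$ points, for any choice of $k$.

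What is missing is a merging mechanism that is compatible with staying $\eps$-close, plus an iteration that compounds it. The paper uses the local extremum $c$ of $f$ (which exists because mixing maps are not monotone): two disjoint intervals $I^-,I^+$ on either side of $c$ are identified by a \emph{single} application of $f$ while remaining close to each other. It then runs an induction that doubles the number of intervals at each stage: mixing together with Lemma~\ref{lem.interm} brings the current common image back onto an arbitrarily short subinterval $J$ whose image under $f^t$ contains $c$ in its interior, so the new stretch of orbit (length $t$) is controlled by the shortness of $J$, the old stretch is controlled by the induction hypothesis, and Remark~\ref{r:interval.preimage} pulls $I^\pm$ back inside each of the existing intervals. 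After $k$ doublings this yields $2^k$ points whose intermediate diameters are small \emph{by construction}, rather than by a single uniform-continuity estimate over the whole time span --- which is exactly the estimate that cannot hold.
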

\begin{proof} 
  Since $f$ is mixing, $f$ is not monotone.
  As a consequence, $f$ has a local extremum $c\in\Int I$. 
  Let us fix some $\eps>0$ and prove, by induction over $k\in\N$, that 
  there exist $2^k$ disjoint intervals $J_{k,1},\ldots,J_{k,2^k}$ and an integer $t_k$ such that:
  \[\forall j \in \cc0{t_k},\diam \left( \bigcup_{i=1}^{2^k} f^j(J_{k,i}) \right) <\eps
  \text{ and }\forall i,j\in\co1{2^k},f^{t_k}(J_{k,i})=f^{t_k}(J_{k,j}).\]
One can find an illustration of these sets for the tent map on Figure~\ref{fig:tent}.
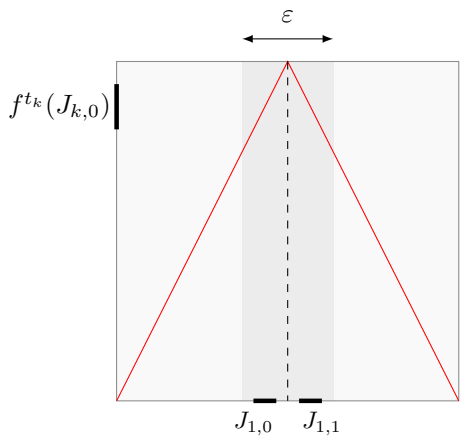
\begin{figure}
    \centering
    \begin{tikzpicture}[scale=0.3]
    
    \node[scale=0.9] at (6,-1) {$J_{1,0}$};
    \node[scale=0.9] at (9,-1) {$J_{1,1}$};
    
    \fill[color=gray!05] (0,0) rectangle (15,15);
    \fill[color=gray!15] (5.5,0) rectangle (9.5,15);
    
    
        \draw[color=gray!90] (0,0) rectangle (15,15);
    \draw[color=red] (0,0) -- (7.5,15) -- (15,0);
    \draw[dashed] (7.5,15) -- (7.5,0);
    \draw[line width=0.6mm] (0,12) -- (0,14);
    \draw[line width=0.6mm] (6,0) -- (7,0);
    \draw[line width=0.6mm] (8,0) -- (9,0);
    
    \draw[latex-latex] (5.5,16) -- (9.5,16);
    \node at (7.5,17) {$\eps$};
    \node at (-2.5,13) {$f^{t_k}(J_{k,0})$};
    \end{tikzpicture}
    \caption{Illustration of the intervals $J_{k,i}$ and $f^{t_k}(J_{k,0})$ from the proof of Lemma~\ref{l:mix}, where $f$ is the tent map and $k=1$.}
    \label{fig:tent}
\end{figure}
The above statement implies that $f$ is not positively $(2^k-1)$-expansive: indeed, 
$J=f^{-t_k}(x)\cap\bigcup_{i=1}^{2^k} f^j(J_{k,i})$, where $x\in f^{t_k}(J_{k,0})$ is arbitrary, satisfies the hypothesis of Remark~\ref{r:fnposexp}.
\begin{enumerate}
 \item \textbf{Base case.} The case $k=0$ is trivial{, for any choice of interval $J_{0,1}$ with length less than $\eps$}.
\item	\textbf{Induction step.} Let us assume that the assumption has been proved for some $k\in\N$.
  Since $f$ is mixing and $c\in\Int I$, \cite[Proposition~2.8]{Ruette} gives some $t>0$ such that 
  \[c \in\Int{f^{t}(f^{t_{k}}(J_{k,0}))}.\]
  {
    By uniform continuity of $f^j$, there exists $\delta>0$ such that for every subinterval $J\subset I$ with length less than $\delta$ and every $j\in\co0t$, $\diam f^j(J)<\eps$.
    By Lemma~\ref{lem.interm} applied to $f^t$, there exists a subinterval $J \subset f^{t_{k}}(J_{k,0})$ with length less than $\delta$ (hence for every $j \in \cc0t$,  $\diam f^j(J)<\eps$) such that
	\[c\in  
	\Int { f^{t}(J)}.\]

      Set $t_{k+1}=t_k+t$.
      Since $c$ is a local extremum, there are disjoint intervals $I^{-}\subset f^{t}(J)\cap[0,c)$ and $I^{+}\subset f^{t}(J)\cap(c,0]$ such that $f(I^{-})=f(I^{+})$.
      For every $j\in\co1{2^k}$, the induction step gives that $J_{k,j}\cap f^{-t-t_k}(I^-)$ and $J_{k_j}\cap f^{-t-t_k}(I^+)$ are nontrivial intervals.
      Hence one can apply Remark~\ref{r:interval.preimage} to the restriction of $f^{t_{k+1}}$ to these subintervals and to $J$: one finds two subintervals $J_{k+1,2j}\subset J_{k,j}\cap f^{-t_{k+1}}(I^-)$ and $J_{k+1,2j+1}\subset J_{k_j}\cap f^{-t_{k+1}}(I^+)$ such that $f^{t_{k+1}}(J_{k+1,2j})=f(I^{-})=f(I^{+})=f^{t_{k+1}}(J_{k+1,2j+1})$.
      The collection $(J_{k+1,i})_{i}$ satisfies the wanted properties.
      }
\popQED\end{enumerate}
\end{proof}

\begin{theorem}\label{theorem:interval}
  No interval system is finitely positively expansive.
\end{theorem}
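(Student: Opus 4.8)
The plan is to argue by contradiction, using the three main ingredients assembled just before the statement: the sensitivity dichotomy, the structure theorem for sensitive interval maps (Proposition~\ref{p:sensmix}), and the fact that mixing interval maps are not finitely positively expansive (Lemma~\ref{l:mix}). Suppose for contradiction that $(I,f)$ is finitely positively expansive, say positively $N$-expansive with some constant $\eps>0$. Since $I=[0,1]$ is a perfect space, Corollary~\ref{c:equicontinuous}\eqref{i:sens} immediately gives that $(I,f)$ is sensitive.

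Having sensitivity, I would feed the system into Proposition~\ref{p:sensmix} to extract a non-degenerate subinterval $J\subset I$ and an integer $k>0$ such that $f^k\restr J$ is mixing. The heart of the argument is then a clash of stability properties on this subsystem. On one hand, since $(I,f)$ is positively $N$-expansive, Remark~\ref{remark:elem}(3) shows that the iterate $(I,f^k)$ is again positively $N$-expansive, and then Remark~\ref{remark:elem}(4) applied to $(I,f^k)$ shows that its subsystem $(J,f^k\restr J)$ is positively $N$-expansive, hence finitely positively expansive. On the other hand, $(J,f^k\restr J)$ is a mixing interval system, so by Lemma~\ref{l:mix} it is \emph{not} finitely positively expansive. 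This contradiction closes the proof.

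The only points requiring a little care are bookkeeping rather than genuine obstacles. First, for $f^k\restr J$ to be a bona fide subsystem I must know that $J$ is closed and $f^k$-invariant; this is implicit in Proposition~\ref{p:sensmix}, since mixing is stated for a self-map of $J$, so $f^k(J)\subseteq J$ and $J$ is a compact subinterval. Second, Lemma~\ref{l:mix} is phrased for maps of $I=[0,1]$, whereas here the mixing map lives on the generic non-degenerate interval $J$; but the conclusion and proof of that lemma are invariant under the affine rescaling carrying $J$ onto $[0,1]$ (rescaling only changes the expansiveness constant, not the existence of such a constant), so the lemma applies verbatim to $(J,f^k\restr J)$.

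Thus the main structural work has already been done in Proposition~\ref{p:sensmix} and Lemma~\ref{l:mix}; the theorem follows by combining them through the stability of positive $n$-expansiveness under taking iterates and subsystems. The step I expect to be most delicate conceptually is making sure that sensitivity is correctly converted into a mixing subsystem with a well-defined self-map, but given Proposition~\ref{p:sensmix} this reduces to the invariance remark above, and no further estimates are needed.

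\begin{proof}
Assume for contradiction that $(I,f)$ is finitely positively expansive, say positively $N$-expansive. Since $I$ is perfect, Corollary~\ref{c:equicontinuous}\eqref{i:sens} shows that $(I,f)$ is sensitive. By Proposition~\ref{p:sensmix}, there are a non-degenerate subinterval $J\subset I$ and $k>0$ with $f^k\restr J$ mixing. By Remark~\ref{remark:elem}(3), $(I,f^k)$ is positively $N$-expansive, and by Remark~\ref{remark:elem}(4) so is its subsystem $(J,f^k\restr J)$. After rescaling $J$ to $[0,1]$, Lemma~\ref{l:mix} applies and shows that the mixing system $(J,f^k\restr J)$ is not finitely positively expansive, a contradiction.
\end{proof}
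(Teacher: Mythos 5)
Your proof is correct and follows essentially the same route as the paper's: sensitivity via Corollary~\ref{c:equicontinuous}\eqref{i:sens}, a mixing subinterval system via Proposition~\ref{p:sensmix}, closure of finite positive expansiveness under iterates and subsystems via Remark~\ref{remark:elem}, and the contradiction with Lemma~\ref{l:mix}. Your extra remarks on invariance of $J$ and rescaling to $[0,1]$ are sound bookkeeping that the paper leaves implicit.
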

\begin{proof}
  By Proposition~\ref{p:sensmix} and Item~\ref{i:sens} of Corollary~\ref{c:equicontinuous}, any finitely positively expansive interval system would admit a non-degenerate subinterval $J$ and $p>0$ such that $f^p\restr J$ is mixing.
  From Remark~\ref{remark:elem}, $f^p\restr J$ would also be a finitely positively expansive system of an interval.
  This contradicts 
Lemma~\ref{l:mix}.
\end{proof}

\subsection{Hawaiian earring}
 
 
We present exploratory result towards Question~\ref{question:continua}.
What differentiates the interval and the circle regarding finite positive expansiveness is the possibility to have locally invertible expanding continuous maps. 
The example of \textbf{Hawaiian earring} reveals another (topological) property which prevents 
existence of finitely positively expansive maps. 
The Hawaiian earring is defined as $\mathcal{H} \defeq \bigcup_{n \in\Z_+} C_n$, where:

\[C_n\defeq\sett[\R^2]{(x,y)}{\left(x - \frac{1}{n}\right)^2 + y^2 \le \frac{1}{n^2}}.\]
This definition is illustrated on Figure~\ref{figure:hawaii}.

\begin{figure}[!ht]
\begin{center}
\begin{tikzpicture}[scale=1]
\draw (0,0) circle (40pt);
\draw (-0.7,0) circle (20pt);
\draw (-1.05,0) circle (10pt);
\draw (-1.225,0) circle (5pt);
\node at (.5,0) {$\ldots$};
\end{tikzpicture}
\end{center}
\caption{Illustration of the Hawaiian earring.\label{figure:hawaii}}
\end{figure}
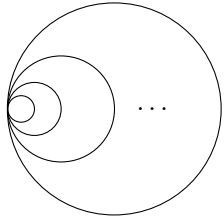

The Hawaiian earring is an example of topological graph.
The formal definition of topological graph can be found, for instance, in \cite{topgrph}.
What is relevant here is that a \textbf{topological graph} $G$ is a locally connected union of (possibly countably many) topological intervals (i.e. subsets of $\mathbb{R}^3$ homeomorphic to the interval $I$) which do not intersect in their interiors (i.e. $e\cap\Int{e'}\ne\emptyset$ for every distinct such intervals $e,e'$). 
The \textbf{degree} of a point $w\in G$, denoted by $\deg_{G}(w)$, is the limsup of the number of connected components of $\ball[\eps]w\setminus\{w\}$, when $\eps$ goes to $0$.
By definition, the degree of any point is in $\cc1\infty$.
For instance, every point in the interior of one interval from the definition of the graph has degree $2$. 
We call \textbf{vertex} every point which has degree $1$ or at least $3$.
A \textbf{subgraph} of $G$ is simply a closed subset $G' \subset G$ (in that case it is also a topological graph).

\begin{lemma}\label{lem:deg}
  Let $G$ be a topological graph, $f:G\to G$ continuous such that $(G,f)$ is a positively $n$-expansive system, and $G'\subset G$ a subgraph.
  Then for every $w\in G'$, $\deg_{G'}(w)\le n\deg_{f(G')}(f(w))$.
\end{lemma}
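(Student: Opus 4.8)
The plan is to control the local branching of $G'$ at $w$ one branch at a time through $f$, and then pass to the $\limsup$ defining the degree. Since the inequality is vacuous when $\deg_{f(G')}(f(w))=\infty$, I assume $D\defeq\deg_{f(G')}(f(w))<\infty$. Let $\eps_0>0$ be the constant of Lemma~\ref{l:locally.preimages}, so that $\card{f^{-1}(z)\cap\ball[\eps_0]x}\le n$ for all $x,z\in G$. Because $G$ is a topological graph and $D<\infty$, I can fix $\delta>0$ small enough that the components $\beta_1,\dots,\beta_{D'}$ of $\ball[\delta]{f(w)}\cap f(G')\setminus\{f(w)\}$ are subarcs, number $D'\le D$, and each of them that accumulates at $f(w)$ does so along a \emph{single} end; this last point is where finiteness of the degree is essential, since a loop based at $f(w)$ would, for $\delta$ small, split into two distinct components rather than counting as one. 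I then fix $0<\eps<\eps_0$, small enough that every component $B_1,\dots,B_c$ of $\ball[\eps]w\cap G'\setminus\{w\}$ accumulates at $w$. It suffices to prove $c\le nD'$ for all such $\eps$, as $\deg_{G'}(w)=\limsup_{\eps\to0}c$.

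The first main step, which I expect to be the crux, is to attach to each branch $B_i$ a single target component $\beta_{j(i)}$ an initial piece of which $f(B_i)$ covers. Parametrising $B_i$ by an arc $\gamma_i$ with $\gamma_i(s)\to w$, continuity gives $f(\gamma_i(s))\to f(w)$, so $f(\gamma_i((0,s_0]))\subset\ball[\delta]{f(w)}$ for some $s_0$. The delicate point is that this image path may pass through the center $f(w)$ repeatedly or wander between several components. Here I invoke Lemma~\ref{l:locally.preimages} a first time: the injective arc $\gamma_i$ meets the finite set $f^{-1}(f(w))\cap\ball[\eps]w$ in at most $n$ points, so $f(\gamma_i(s))=f(w)$ for only finitely many $s$; discarding them I obtain $s_1\in(0,s_0]$ with $f(\gamma_i((0,s_1]))$ connected, avoiding $f(w)$, and accumulating at $f(w)$. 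A connected subset of the punctured local graph that avoids the center lies in a single component $\beta_{j(i)}$, and since it limits to $f(w)$ along the unique end of $\beta_{j(i)}$, it contains an initial subarc of $\beta_{j(i)}$. In particular no branch can collapse to $f(w)$, so $i\mapsto j(i)$ is everywhere defined.

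The second step bounds, for each $j$, the number of branches with $j(i)=j$. Since only finitely many branches are involved, I may choose $z_j\in\beta_j\setminus\{f(w)\}$ so close to $f(w)$ that $z_j$ lies in the initial subarc covered by every $B_i$ with $j(i)=j$; each such $B_i$ then contains a point $y_i$ with $f(y_i)=z_j$. The $y_i$ are pairwise distinct, being located in disjoint branches, and all lie in $\ball[\eps]w$, so by Lemma~\ref{l:locally.preimages} (used a second time) there are at most $n$ of them. Thus every fibre of $i\mapsto j(i)$ has cardinality at most $n$, whence $c\le nD'\le nD$. Letting $\eps\to0$ yields $\deg_{G'}(w)\le n\,\deg_{f(G')}(f(w))$, as desired. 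The main obstacle throughout is the local topological bookkeeping of step one — ruling out collapse and oscillation and securing a single covered end per branch — whereas the two applications of the at-most-$n$-to-one property are then routine.
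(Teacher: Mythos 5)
Your proof is correct and takes essentially the same route as the paper's, which is just a two-sentence sketch of your pigeonhole: each branch of $G'$ at $w$ maps onto an initial piece of a single branch of $f(G')$ at $f(w)$, so more than $n\deg_{f(G')}(f(w))$ branches at $w$ would force $n+1$ points arbitrarily close to $w$ with a common image, contradicting Lemma~\ref{l:locally.preimages}. Your steps one and two merely supply the local bookkeeping (the branch-to-target assignment, images passing repeatedly through $f(w)$, the choice of the common point $z_j$) that the paper asserts without justification, relying only on the same implicit regularity of punctured balls in a topological graph that the paper itself takes for granted.
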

\begin{proof}
  Were it not the case, $f$ would map strictly more than $n$ disjoint non-trivial (open) intervals with endpoint $w$ onto some common non-trivial interval with endpoint $f(w)$.
  Therefore, one could find a set of $(n+1)$ points arbitrarily close to each other (and to $w$), all mapped to the same point, contradicting Lemma~\ref{l:locally.preimages}.
\end{proof}

\begin{remark}
For any integer $n$ and every $k \le n$, we can construct a map $f$ on $C_{\cc1n}$ which is positively $k$-expansive and not positively $(k-1)$-expansive: for instance a map which fixes $0$, acts as $z \mapsto z^{2}$ on each circle $C_k , \cdots , C_n$, and maps the circles $C_1, \cdots , C_{k-1}$ onto $C_k$ in such a way that for every $i<k$,  $f\restr{C_i}$ is one-to-one.
\end{remark}



\begin{proposition}
There is no finitely positively expansive map on the Hawaiian earring.
\end{proposition}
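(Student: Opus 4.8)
The plan is to exploit that $0$ is the unique point of $\mathcal H$ of infinite degree: every other point lies in the interior of a single circle $C_j$ and hence has degree $2$, whereas $\ball[\eps]0$ already contains all but finitely many of the circles $C_n$ entirely, so $\ball[\eps]0\setminus\{0\}$ has infinitely many components and $\deg_{\mathcal H}(0)=\infty$. Assume toward a contradiction that $(\mathcal H,f)$ is positively $n$-expansive with constant $\eps$. First I would apply Lemma~\ref{lem:deg} with $G'=\mathcal H$ and $w=0$, and use that degree can only drop when passing to the subgraph $f(\mathcal H)\subseteq\mathcal H$, to get $\infty=\deg_{\mathcal H}(0)\le n\deg_{f(\mathcal H)}(0)\le n\deg_{\mathcal H}(f(0))$. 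Since every point distinct from $0$ has degree $2$, this forces $\deg_{\mathcal H}(f(0))=\infty$, i.e. $f(0)=0$.

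Next I would localize near the fixed point $0$ using that $f$ is bounded-to-one (Corollary~\ref{cor.bounded.fibers}): as $f^{-1}(0)$ is finite, I choose $\delta<\eps$ with $\ball[\delta]0\cap f^{-1}(0)=\{0\}$. For every circle $C_m\subseteq\ball[\delta]0$ the punctured circle $C_m\setminus\{0\}$ is connected and avoids $f^{-1}(0)$, so $f(C_m\setminus\{0\})$ is a connected subset of $\mathcal H\setminus\{0\}=\bigsqcup_j\left(C_j\setminus\{0\}\right)$ and must lie in a single petal. Thus each small circle maps into a single circle $C_{k(m)}$ as a non-degenerate arc through $0$, which defines an index map $k$ on the cofinitely many indices $m$ with $C_m\subseteq\ball[\delta]0$.

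The contradiction should then come from one of two merging mechanisms, both feeding into Remark~\ref{r:fnposexp}, whose use of the ambient constant $\eps$ is what keeps the estimates uniform. If some circle receives the images of at least $n+1$ distinct small circles along a common prong at $0$, then a point $z$ close enough to $0$ on that prong has $n+1$ pairwise distinct preimages, one in each such (near-$0$) circle, contradicting Lemma~\ref{l:locally.preimages}. Otherwise the induced map on prongs at $0$ is at most $n$-to-one, so cofinitely many small circles map to small circles, and I would split on the forward $k$-orbits. If infinitely many indices have a forward orbit staying bounded below, the corresponding full circles lie in $W_\eps(0)$, since each iterate remains within $2/k^t(m)\le\eps$ of $0$; then $W_\eps(0)$ is infinite, a contradiction. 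If instead the forward orbits reach the finitely many large circles, I would pigeonhole infinitely many small circles onto a common large circle, fix a common finite arrival time, and push the source circles arbitrarily close to $0$ to obtain $n+1$ points whose orbits stay within an $\eps$-ball around $0$ and coincide at that time, again contradicting Remark~\ref{r:fnposexp} (using that iterates are positively $n$-expansive by Remark~\ref{remark:elem}).

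The hard part will be this last escape case. Orbits may take arbitrarily long to leave a neighbourhood of $0$ — as already happens for the outward shift $C_m\mapsto C_{m-1}$ composed with an expanding self-map of $C_1$ — so one cannot simply read off a bounded merging time, and the local-injectivity constant of $f^t$ genuinely shrinks with $t$. The device I would rely on is to fix the number $t$ of steps first, freezing that constant, and only afterwards use $\diam C_m\to0$ to cluster the $n+1$ merging points inside the required small ball; invoking Remark~\ref{r:fnposexp} with the constant $\eps$ rather than with the constant of $f^t$ is exactly what circumvents the shrinking. Carrying this out uniformly over all continuous self-maps — in particular controlling how the orbits traverse the finitely many large circles, where the non-injective dynamics that actually produce the merging live — is the technical heart of the argument.
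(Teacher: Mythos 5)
Your first two steps (the wedge point $\zero$ is fixed because its degree is infinite, and cofinitely many petals map into single petals via an index map $k$), your prong-merging mechanism via Lemma~\ref{l:locally.preimages}, and your treatment of the case where some $k$-orbit never leaves the small circles (giving an infinite $W_\eps(\zero)$) all agree in substance with the paper's proof. The genuine gap is the escape case, and the device you propose for it cannot work: by your own prong argument, $k$ is finitely-many-to-one (at most $2n$-to-one), so for any \emph{fixed} $t$ only finitely many circles can reach the finitely many large circles within $t$ steps. Hence the arrival times of the circles $C_m$ necessarily tend to infinity as $m\to\infty$, and your two requirements --- ``fix a common finite arrival time'' and ``push the source circles arbitrarily close to $0$'' --- are mutually exclusive. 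Freezing $t$ first and then letting $\diam C_m\to0$ only yields circles that at time $t$ are still wandering among the small petals, nowhere near merged; and even landing in a common large circle would not by itself give the exact coincidence of images that Remark~\ref{r:fnposexp} demands.

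The paper closes this case with an idea absent from your proposal: a finite \emph{forward-invariant} absorbing family of petals. Using $\eps$-sensitivity (Corollary~\ref{c:equicontinuous}) and finiteness of $f^{-1}(\zero)$ (Corollary~\ref{cor.bounded.fibers}), it constructs a finite index set $J$ with $f(C_J)\subseteq C_J$ such that every circle satisfies $f^t(C_m)\subseteq C_J$ for some $t$. Forward invariance is the crucial point: for the finite family $C_1,\dots,C_m$ with $m>N\card{J}$, one may take $t$ to be the \emph{maximum} of their individual absorption times, so a single common time exists even though absorption times are unbounded over the whole space. The contradiction is then the degree inequality of Lemma~\ref{lem:deg} applied to the system $(\mathcal H,f^t)$, which is positively $N$-expansive by Remark~\ref{remark:elem}:
\[\deg_{C_{\cc1m}}(\zero)=m>N\card{J}\ge N\deg_{f^t(C_{\cc1m})}(f^t(\zero)).\]
Note that this inequality is purely topological: the expansiveness constant of $f^t$ never enters, because the merging points produced in the proof of Lemma~\ref{lem:deg} are preimages of a single point and lie arbitrarily close to $\zero$, hence inside any ball. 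So the ``shrinking constant of $f^t$'' that you identify as the technical heart is in fact a non-issue once the absorbing set $J$ is available; the missing idea is the construction of $J$ itself.
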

\begin{proof}
  Let us consider a map $f$ from the Hawaiian earring to itself and assume ad absurdum that it is finitely positively expansive, and fix $\eps>0$ and $N$ such that for all $x$, $\card{W_{\eps}(x)}\le N$.
\begin{enumerate}
\item \textbf{The accumulation point $\zero$ is fixed by $\boldsymbol{f}$.} 
This is a direct application of Remark~\ref{lem:deg} to $w=\zero$, because $\deg_{\mathcal H}(\zero)=\infty$.
\item \textbf{There exists a finite $\Lambda\subset \Z_+$ and some map $\kappa:{\Z_+} \backslash \Lambda\to{\Z_+}$ such that for every $n\in {\Z_+} \backslash \Lambda$, $f(C_n)$ is contained in $C_{\kappa(n)}$.}
  Indeed, let $\Lambda\defeq\sett[{\Z_+}]n{f^{-1}(\zero)\cap C_n\setminus\{\zero\}}\ne\emptyset$.
  By Corollary~\ref{cor.bounded.fibers} and disjointness of all $C_n\setminus\{\zero\}$, $\Lambda$ is finite.
  Moreover, for $n\in{\Z_+}\setminus\Lambda$, the image of $C_n\setminus\{\zero\}$ by $f$ is a connected set not containing $\zero$, hence it is contained in some circle $C_{\kappa(n)}$ (and so is $C_n$, since $f(\zero)=\zero\in C_{\kappa(n)}$).
  

\item \textbf{There exists a finite $\Gamma\supset\Lambda$ such that for every $n\in{\Z_+}$, there exists $t\in{\N}$ such that $\kappa^t(n)\in\Gamma$.
  }
  Indeed, by Point~\ref{i:sens} of Corollary~\ref{c:equicontinuous}, $f$ is $\eps$-sensitive.
  There exists $p$ such that $\diam(C_{\co p\infty})\le\eps$.
  Let $\Gamma\defeq\co0p\cup\Lambda$.
  Let $n\in{\Z_+}$, $x\in C_n\setminus\{\zero\}$, and $\delta>0$ such that $\ball[\delta]x\subset C_n$.
  By $\eps$-sensitivity, there exists $t\in{\N}$ and $y\in\ball[\delta]x$ such that $d(f^t(x),f^t(y))>\eps$.
  This means that at least one of the points $f^t(x)$ and $f^t(y)$ is in $C_{\co0p}\subset C_{\Gamma}$.
  Since $x,y\in C_n$, we deduce that either $\kappa^t(n)\in\Gamma$, or $\kappa^t(n)$ is undefined.
  In the latter case, it means that there exists $t'<t$ such that $\kappa^{t'}(n)\in\Lambda\subset\Gamma$.
\item \textbf{There is a finite $J\supset\Gamma$ such that $f(C_J) \subset C_J$, and for every $n\in{\Z_+}$, there exists $t\in{ \N}$ such that $f^t(C_n)\subset C_J$. 
  }
    Let us prove this. First, for every $n\in{ \Z_+}$, the set $f(C_n)$ is contained in some $C_{D_n}$ for some finite set $D_n\subset\Z_+$ (otherwise the point $\zero$ would have infinitely many preimages, which is not possible).
      Consequently, $f(C_\Gamma)$ is contained in some $C_D$ for some finite set $D\defeq\bigcup_{n\in\Gamma}D_n\subset\Z_+$. 
 Since $D\setminus\Gamma$ does not intersect $\Lambda$, $\kappa$ is well defined over $D\setminus\Gamma$.
 Furthermore, by the previous point, for every $n \in D\setminus\Gamma$, 
 there exists some $t_n > 0$ such that $\kappa^{t_n}(n) \in \Gamma$. 
 Now let
\[J\defeq\Gamma \cup \sett{\kappa^{t'}(n)}{n\in D\setminus\Gamma,0\le t'\le t_n}.\] 
Let us prove that $f(C_J) \subset C_J$.
If $m\in\Gamma$, then $f(C_m)\subset C_D\subset C_{\kappa^0(D\setminus\Gamma) \cup \Gamma}\subset C_J$.
If $m\in J\setminus\Gamma$, then there exists $n\in D\setminus\Gamma$ and $t'<t_n$ (note that $t'=t_n$ implies that $m\in\Gamma$) such that $m=\kappa^{t'}(n)$; then $f(C_m)\subset C_{\kappa(m)}=C_{\kappa^{t'+1}(n)}\subset C_J$.
The other statement is directly derived from the previous point and the fact that $J\supset\Gamma$.
\item  {\bf The system is not positively $N$-expansive.} 
  Let $n>N\card J$.
  From the previous point, there exists $t\in{ \N}$ such that $f^t(C_{\cc1n}) \subset C_J$.
  Then $\deg_{C_{\cc1n}}(\zero)=n>N\card J\ge N\deg_{f^t(C_{\cc1n})}(f^t(\zero))$, which, by Lemma~\ref{lem:deg}, gives that $f^t$ cannot be positively $N$-expansive.
  We conclude by Remark~\ref{remark:elem}.
\popQED
\end{enumerate}
\end{proof}

This proof can be generalized to graphs with one infinite-degree vertex whose all neighborhoods have complements which are graphs defined as finite unions of intervals.






\section{Two-sided shifts}\label{section:shifts}

It is well known that ${ \N}$-shifts are all positively expansive. We will see in this section that the case of $\Z$-shifts is more complex. 
Furthermore the facts that they are all homeomorphisms, and that $\Z$-shifts with the shadowing property are exactly the well-known shifts of finite type make this class a good playground for exploring generalizations of Theorem~\ref{theorem.shadowing}: this is the object of study of Section~\ref{section.predecessor}.

We also found some examples of $\Z$-shifts which are finitely positively expansive. 
As these examples can be obtained using substitutions, we explored the relation between substitutions and this property, and provide some sufficient conditions on the substitutions for the corresponding $\Z$-shift to be finitely positively expansive [Section~\ref{section:sufficient}]. We still leave open the following questions: 

\begin{question}
Which $\Z$-shifts are finitely positively expansive? Which substitutive / S-adic ones are? 
\end{question}

\subsection{Preliminary facts}

A standard reference to symbolic dynamics is \cite{Marcus}. Below we recall a few standard facts that will be used later.
Let $\A$ be a finite set which we will call \textbf{alphabet}. The \emph{shift} action, denoted by $\sigma$,
is the function from $\A^{\Z}$ to itself 
such that for every $x$ in this set and $i \in \Z$, $\sigma(x)_i = x_{i+1}$.
A $\Z$-shift $X$ on alphabet $\A$ is a dynamical system formed by the shift action restricted to a compact subset of $\A^{\Z}$ which is stable under the shift action.
{We will denote by $\lang[r]X\defeq\sett{x_{\co0r}}{x\in X}$ its \textbf{language} of length $r\in{\N}$.

In the following, for every finite set $\A$ and every $z\in \A^{{\N}}$, we will denote by $\mirr z$ the element of $\A^{-\N}$ defined by: for all $k<0$, ${\mirr z}_k = z_{-k-1}$.
  Furthermore, for every $z' \in \A^{{ \N}}$ and $z \in \A^{{ \Z_-}}$, we will simply denote by $zz'$ the element of $\A^{\Z}$ which coincides with $z$ on ${ \Z_-}$, and with $z'$ on ${ \N}$.
  For every $a \in \A$, we will denote by $\uinf a$ the infinite word $z \in \A^{{ \N}}$ such that for every $k \in\N$, $z_k = a$.
  We also simplify the notation $\mirr{\uinf a}$ into $\pinf a$.

An asymptotically periodic configuration is one such that there exist $q \in \Z$ and $p \in {\Z_+}$, respectively called \textbf{preperiod} and \textbf{asymptotic period} such that for every $i\ge q$, $x_{i+p}=x_i$.

}

A shift $X$ is said to be of \textbf{finite type} when there exists a finite set of words $\mathcal{F}$ such that $X = \sett[\A^\Z]x{\forall i,j\in\Z,x_{\co ij}\notin \mathcal{F}}$.
The following is a well known fact by Walters \cite{Wal}:
\begin{proposition}
Every $\Z$-shift has the shadowing property if and only if it is of finite type. 
\end{proposition}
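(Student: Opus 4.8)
The statement is an equivalence, so I would prove the two implications separately. Throughout, write $\sigma$ for the shift and fix the standard metric on $\A^\Z$ for which $d(x,y)$ is small exactly when $x$ and $y$ agree on a large central window $\cc{-N}N$; I will freely pass between ``distance $<2^{-N}$'' and ``agreement on $\cc{-N}N$''.

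\textbf{Shift of finite type implies shadowing.} Suppose $X$ is defined by a finite set of forbidden words, all of length at most $L$. Given $\eps>0$, fix $N$ with $2^{-N}<\eps$ and choose $\delta>0$ small enough that $d(\sigma(u),v)<\delta$ forces the overlap identities $u_{j+1}=v_j$ for all $\card j\le N+L$. Given a $\delta$-pseudo-orbit $(x_n)_{n\in\N}$, I would define a candidate $y\in\A^\Z$ by $y_k\defeq(x_k)_0$ for $k\ge0$ and $y_k\defeq(x_0)_k$ for $k<0$, and then check two things. First, $y\in X$: using the overlap identities, the window $y_{\cc k{k+L-1}}$ coincides with the length-$L$ subword $(x_k)_{\cc0{L-1}}$ of $x_k\in X$ (and analogously with $x_0$ for negative indices), so no forbidden word ever occurs. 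Second, $y$ shadows the pseudo-orbit: a telescoping of the overlap identities yields $(x_{n+j})_0=(x_n)_j$ for all $\card j\le N$, whence $\sigma^n(y)$ and $x_n$ agree on $\cc{-N}N$ and $d(\sigma^n(y),x_n)<\eps$. Both verifications are routine once the window sizes are fixed, so I would not dwell on them.

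\textbf{Shadowing implies shift of finite type.} Fix $\eps$ small enough that $d(x,y)<\eps$ forces $x_0=y_0$, and let $\delta>0$ be the shadowing constant, with associated window $M$ (so $d(u,v)<\delta$ whenever $u,v$ agree on $\cc{-M}M$). I claim that $X$ coincides with the SFT $X_L$ whose allowed words are $\lang[L]X$, where $L\defeq2M+3$. The inclusion $X\subseteq X_L$ is automatic, so the content is $X_L\subseteq X$. Let $x\in X_L$. For each $n\ge0$ the word $x_{\cc{n-M-1}{n+M+1}}$ lies in $\lang X$, so I may pick $y_n\in X$ agreeing with $\sigma^n(x)$ on $\cc{-M-1}{M+1}$. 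The key point is that $(y_n)_{n\in\N}$ is a genuine $\delta$-pseudo-orbit inside $X$: indeed $\sigma(y_n)$ and $y_{n+1}$ both agree with $\sigma^{n+1}(x)$ on $\cc{-M}M$, hence with each other, giving $d(\sigma(y_n),y_{n+1})<\delta$. Shadowing then produces $w\in X$ with $w_n=\sigma^n(w)_0=(y_n)_0=x_n$ for every $n\ge0$.

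\textbf{Finishing, and the main obstacle.} This last step only controls the nonnegative coordinates of $x$, because the pseudo-orbit is indexed by $\N$. To recover all of $x$, I would apply the construction to each shifted configuration $\sigma^{-m}(x)$, which again lies in $X_L$ by shift-invariance, obtaining $w^{(m)}\in X$ agreeing with $x$ at all coordinates $\ge-m$; letting $m\to\infty$ and using that $X$ is closed yields $x\in X$, completing the inclusion $X_L\subseteq X$. I expect the shadowing-implies-SFT direction to be the real obstacle: one must manufacture an \emph{honest} pseudo-orbit lying in $X$ whose shadow is forced to reproduce the prescribed sequence $x$ coordinate by coordinate, and then reconcile the one-sidedness of the pseudo-orbit (as defined here) with the two-sided nature of configurations — a gap the closedness/limit argument bridges. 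The converse implication is, by comparison, mechanical bookkeeping with window sizes.
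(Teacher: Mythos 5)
Your proof is correct. Note that the paper itself does not prove this proposition at all: it quotes it as a well-known fact and cites Walters, so there is no internal proof to compare against. What you have written is essentially the classical argument from that literature: for the forward direction, the overlap identities forced by a sufficiently small $\delta$ let you splice the pseudo-orbit into a single configuration that is locally admissible (hence in $X$, since the forbidden words have bounded length) and shadows; for the converse, you approximate a locally admissible configuration by genuine points of $X$ on windows of width $2M+3$, observe these form a $\delta$-pseudo-orbit, and use shadowing to pin down the nonnegative coordinates, finishing with the shift-and-take-limits argument to handle the two-sidedness (closedness of $X$ is exactly what is needed there, and you invoke it). The window-size bookkeeping in both directions checks out. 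One small remark: the paper's stated definition of shadowing has the roles of $\eps$ and $\delta$ garbled; you read it in the standard way (every $\delta$-pseudo-orbit is $\eps$-shadowed), which is clearly what is intended and is the form needed for this equivalence.
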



As a consequence, Theorem~\ref{theorem.shadowing} immediately implies the following.

\begin{corollary}\label{c:finite.type}
Any finitely positively expansive $\Z$-shift of finite type is finite. 
\end{corollary}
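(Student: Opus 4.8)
The plan is to combine two results already established in the excerpt. Corollary~\ref{c:finite.type} concerns a $\Z$-shift $X$ that is simultaneously finitely positively expansive and of finite type, and asserts that $X$ must be finite. The key observation is that a shift of finite type is precisely a shift with the shadowing property, as recorded in the cited proposition of Walters. So the entire content of the corollary is to translate the finite-type hypothesis into the shadowing hypothesis and then apply the general theorem.

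First I would invoke the Walters proposition, which states that a $\Z$-shift has the shadowing property if and only if it is of finite type. Thus any finitely positively expansive $\Z$-shift of finite type is, equivalently, a finitely positively expansive $\Z$-shift with the shadowing property. Second, I would note that every $\Z$-shift is invertible, since the shift action $\sigma$ is a homeomorphism of $\A^{\Z}$ and remains so when restricted to any closed shift-invariant subset. Therefore such a shift is an \emph{invertible} finitely positively expansive system with the shadowing property.

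With these two reductions in place, the conclusion is immediate from Theorem~\ref{theorem.shadowing}, which asserts that any invertible finitely positively expansive system with the shadowing property is finite. Applying it to our shift yields that $X$ is finite, which is exactly the statement of the corollary.

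There is essentially no obstacle here: the corollary is a direct specialization of Theorem~\ref{theorem.shadowing} to the symbolic setting, using the finite-type/shadowing equivalence. The only point requiring (trivial) attention is the verification that a $\Z$-shift is invertible, but this is built into the definition of a $\Z$-shift as a subsystem of $(\A^{\Z},\sigma)$. Hence the proof is a one-line deduction, and I would simply write that the claim follows from Walters' proposition together with Theorem~\ref{theorem.shadowing}.
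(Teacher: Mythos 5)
Your proof is correct and is exactly the paper's argument: the corollary is stated there as an immediate consequence of Walters' finite-type/shadowing equivalence combined with Theorem~\ref{theorem.shadowing}, with invertibility of the shift implicit. Nothing is missing.
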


It is natural to wonder if this result can be extended to sofic shifts, since they are to some extent similar to shifts of finite type.
They are usually defined as follows:
\begin{definition}
A $\Z$-shift $X$ is \textbf{sofic} when there exists a shift of finite type $Z$ and a 
 factor map $\phi : Z \rightarrow X$ (i.e. a shift-commuting continuous surjective map).
\end{definition}

Using a characterization of sofic shifts in terms of predecessor sets, we can indeed generalize the result to sofic shifts.
As a matter of fact, we will see that {it even holds for} shifts with countably many predecessor sets.

\subsection{Predecessor sets}\label{section.predecessor}

There is a natural relation between sofic shifts and finite positive expansiveness, as they can be both characterized in terms of predecessor sets.

\begin{definition}
A \textbf{predecessor} of infinite word $z'\in\A^{{ \N}}$ in $\Z$-shift $X$ is some left-infinite word $z\in\A^{{ \Z_-}}$ such that $\exists x\in X,x_{{ \N}}=z'$ and $x_{{ \Z_-}}=z$.
\end{definition}

A proof of the following can be derived from~\cite[Section~3.2]{Marcus}:
\begin{theorem}\label{theorem.sofic.followers}
A $\Z$-shift is sofic if and only if it has a finite number of predecessor sets. 
\end{theorem}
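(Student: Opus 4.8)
The plan is to prove both implications through the standard presentation of sofic shifts by finite labeled graphs: a $\Z$-shift is sofic if and only if it is the set $X_G$ of bi-infinite sequences of edge-labels of walks in some finite directed graph $G$ whose edges carry labels in $\A$. (Such a graph gives a factor map onto $X_G$ from the edge shift of finite type of $G$, which matches the definition of sofic used above; conversely any sofic shift admits such a presentation.) Throughout, I write $P(z')\defeq\sett[\A^{\Z_-}]{z}{zz'\in X}$ for the predecessor set of a right-infinite word $z'\in\A^{\N}$, and I interpret the statement as asserting that $X$ is sofic exactly when the collection $\sett{P(z')}{z'\in\A^{\N}}$ is finite. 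The argument is the classical follower-set construction of \cite[Section~3.2]{Marcus}, transported to predecessor sets of one-sided-infinite words.

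For the forward implication, fix a finite labeled graph $G=(V,E)$ presenting $X$. For an appearing $z'$ let $S(z')\subseteq V$ be the set of vertices from which some forward-infinite walk reads $z'$, and for $v\in V$ let $B(v)$ be the set of left-infinite words read by backward-infinite walks ending at $v$. Any bi-infinite walk reading $zz'$ passes through a well-defined vertex $v$ between coordinates $-1$ and $0$, with $v\in S(z')$ and $z\in B(v)$; conversely any such $v$ yields a walk. Hence $P(z')=\bigcup_{v\in S(z')}B(v)$ depends only on the subset $S(z')$, so there are at most $2^{\card V}$ predecessor sets. I expect this direction to be routine.

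The converse is the substantial one. Its engine is the identity, for $z'=az''$ with $a=z'_0$ and $z''=\sigma(z')$, that $zz'\in X$ iff $za\in P(z'')$, whence $P(z')=\sett{z}{za\in P(z'')}$; in particular $P(z')$ is determined by $a$ and $P(\sigma z')$. Assuming finitely many predecessor sets, I would let $G$ have vertex set the (finite) family of predecessor sets of appearing words, and draw an edge labeled $a$ from $P(z')$ to $P(\sigma z')$ whenever $a=z'_0$ for some appearing $z'$; all these vertices are nonempty, being predecessor sets of appearing words. Each $x\in X$ then traces the walk through the tails $P(x_{\co k\infty})$, with the edge from $P(x_{\co k\infty})$ to $P(x_{\co{k+1}\infty})$ labeled $x_k$; this shows $X\subseteq X_G$.

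The main obstacle is the reverse inclusion $X_G\subseteq X$. Along any walk reading $y=(y_k)_{k\in\Z}$, the identity above forces the backward-deterministic relation $P^{(k)}=\sett{z}{zy_k\in P^{(k+1)}}$ between consecutive vertices, and iterating it gives $P^{(m)}=\sett{z}{z\,y_m\cdots y_n\in P^{(n+1)}}$ for all $m\le n$. Since $P^{(m)}$ is nonempty and $P^{(n+1)}=P(w)$ for some appearing $w$, there is a left-infinite $z$ with $z\,y_m\cdots y_n\in P(w)$, i.e.\ $(z\,y_m\cdots y_n)w\in X$; thus $y_m\cdots y_n$ occurs in $X$. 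As every finite factor of $y$ lies in the language of $X$ and $X$ is closed and shift-invariant, a compactness argument yields $y\in X$. Therefore $X=X_G$ is presented by a finite labeled graph, hence sofic, completing the proof.
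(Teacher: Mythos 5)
Your proof is correct and is essentially the argument the paper itself points to: the paper offers no proof of its own, deferring to \cite[Section~3.2]{Marcus}, and your construction is exactly that classical follower-set argument transported to predecessor sets — the easy direction via the finitely many vertex-subsets $S(z')$ of a presenting labeled graph, and the converse via the finite graph whose vertices are the (nonempty) predecessor sets, whose edges are forced by the backward-deterministic identity $P(z')=\sett{z}{zz'_0\in P(\sigma z')}$, and whose inclusion $X_G\subseteq X$ follows from nonemptiness of the vertices plus a compactness argument, all of which you carry out correctly. The only imported ingredient, namely that a factor of a shift of finite type admits a finite labeled-graph presentation, is the same standard recoding fact from that section of the reference, so this is consistent with the paper's intended proof rather than a genuinely different route.
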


The following is straightforward from the definition of predecessor sets:
\begin{lemma}\label{l:preds}
A $\Z$-shift is positively $n$-expansive if and only if all of its predecessor sets have at most $n$ elements. Equivalently, for every infinite word $z$ which is the restriction of a configuration of the shift on $\co0\infty$, and every integer $\ell$, the number of predecessors of $z$ with length $\ell$ is at most $n$.
\end{lemma}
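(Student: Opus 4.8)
The plan is to prove both directions of the equivalence directly from the definitions, and then to reconcile the two reformulations of the condition in the statement. The key observation is that for a $\Z$-shift, the expansiveness constant $\eps$ can be chosen so that $d(x,y)<\eps$ is equivalent to $x_0=y_0$ (or more generally agreement on a fixed central window), so that the set $W_\eps(x)$ becomes a purely combinatorial object governed by agreement on $\N$.

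First I would fix a metric compatible with the product topology, say $d(x,y) = 2^{-\min\{|i| : x_i \neq y_i\}}$, and choose $\eps$ so that $d(x,y) < \eps$ if and only if $x_0 = y_0$. Then unwind the definition of $W_\eps(x)$: since $d(\sigma^t(x),\sigma^t(y)) < \eps$ for all $t \in \N$ means exactly $x_t = y_t$ for all $t \ge 0$, we get $W_\eps(x) = \sett[X]{y}{y_\N = x_\N}$. Thus $\card{W_\eps(x)}$ is precisely the number of configurations in $X$ that agree with $x$ on $\N$, which is the number of predecessors of the right-infinite word $z \defeq x_\N$ in $X$. This identifies $\card{W_\eps(x)}$ with the cardinality of the predecessor set of $z$, and so positive $n$-expansiveness with this particular $\eps$ is equivalent to all predecessor sets having at most $n$ elements.

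The remaining subtlety is to verify that \emph{any} valid expansiveness constant gives the same combinatorial condition, i.e. that the choice of $\eps$ above is not special. For this I would note that any $\eps' > 0$ can be compared with an $\eps$ of the above form: there is an integer $r$ such that $d(x,y) < \eps'$ is implied by agreement on $\co{-r}{r}$ and implies agreement on $\co{-r'}{r'}$ for some $r' \le r$. Since shifting allows one to translate the central window, the finiteness of $W_{\eps'}(x)$ for all $x$ is equivalent (up to the integer $n$ being the same) to finiteness of predecessor sets, because agreement on a shifted window $\co{-r}{\infty}$ versus $\co0\infty$ differs only by a bounded factor controlled by the finite alphabet. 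Concretely, if all predecessor sets have size at most $n$, then agreement on $\N$ forces at most $n$ choices for the left tail, and the central finite block $x_{\co{-r}0}$ is then also determined up to finitely many possibilities, giving a uniform bound; conversely a predecessor set of size $> n$ yields, after an appropriate shift, more than $n$ points in some $W_{\eps'}$.

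Finally I would address the ``Equivalently'' clause, which restates the condition finitely: a predecessor set of $z$ has more than $n$ elements if and only if, for some length $\ell$, there are more than $n$ distinct left words of length $\ell$ each extending to a configuration with right part $z$. This is immediate by a compactness argument: distinct predecessors must differ at some finite coordinate, so a predecessor set of cardinality $> n$ produces $> n$ distinct length-$\ell$ left factors for $\ell$ large enough, and conversely finitely many length-$\ell$ counts bounded by $n$ for all $\ell$ force the full predecessor set to have at most $n$ elements by taking an inverse limit. I expect the main obstacle to be the careful bookkeeping showing that the combinatorial condition is independent of the admissible constant $\eps$; the two direct implications and the finite reformulation are routine once the identification $W_\eps(x) \leftrightarrow \text{predecessors of } x_\N$ is in place.
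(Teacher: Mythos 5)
Your proof is correct: the identification of $W_\eps(x)$ with the predecessor set of $x_\N$ for a suitably small constant (so that $d(x,y)<\eps$ iff $x_0=y_0$), the shift argument turning a predecessor set of size $>n$ into $n+1$ points of a single $W_{\eps'}$ for an arbitrary admissible constant $\eps'$, and the finite-suffix reformulation are exactly the verifications needed. The paper gives no proof at all---the lemma is introduced with ``the following is straightforward from the definition of predecessor sets''---so your write-up supplies precisely the standard argument the authors left implicit (your middle paragraph's extra worry about uniform bounds for every $\eps'$ is unnecessary, since positive $n$-expansiveness only requires one witnessing constant, but it does no harm).
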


\begin{corollary}\label{corollary.countable}
    Every finitely positively expansive $\Z$-shift with countably many predecessor sets is finite.
\end{corollary}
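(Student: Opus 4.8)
The plan is to reduce the statement to the purely cardinality-theoretic assertion that an infinite positively $n$-expansive $\Z$-shift must have uncountably many distinct pasts, and then to produce these pasts from an infinite minimal subsystem.

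First I would reformulate the hypothesis. By Lemma~\ref{l:preds} every predecessor set has at most $n$ elements, so the set of pasts $L^-\defeq\sett[\A^{\Z_-}]{z}{\exists x\in X,\ x_{\Z_-}=z}$ is the union of the (by assumption countably many) predecessor sets, hence a countable union of finite sets, i.e. countable. This is the only direction I need. Since the goal is to show that $X$ is finite, Corollary~\ref{c:equicontinuous}\eqref{i:indnb} reduces this to showing that $X$ is not uncountable; so I assume $X$ infinite — hence uncountable by the same corollary — and aim at a contradiction by proving that $L^-$ is uncountable.

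The second step is to locate an infinite minimal subsystem inside $X$. A compact $\Z$-shift all of whose points are periodic is finite (there are finitely many points of each period by Corollary~\ref{c:equicontinuous}, and the periods are bounded by a short window and compactness argument), so $X$ contains a non-periodic point $x$. Since a $\Z$-shift is totally disconnected, balls of small radius are clopen, and Proposition~\ref{p:transnexp} applies: $\cl{\orb\sigma x}$ is aperiodic. Being nonempty and aperiodic it is infinite, and any minimal subshift $M\subseteq\cl{\orb\sigma x}$ is again aperiodic, hence an infinite minimal subshift.

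The crux is then the lemma that an infinite minimal subshift $M$ has uncountably many pasts. I would show that $L^-(M)$ has no isolated point. An isolated past would provide a word $w$ admitting a unique left-infinite extension in $M$; by minimality $w$ recurs with bounded gaps in any $x\in M$ whose past ends in $w$, and at each occurrence the uniqueness forces the same left tail, which makes $x$ eventually periodic to the left and therefore, again by minimality, periodic — contradicting that $M$ is infinite minimal. Hence $L^-(M)$ is a nonempty perfect set, so uncountable; since $L^-(M)\subseteq L^-$, this contradicts the countability of $L^-$ and finishes the proof. The main obstacle is precisely this last transfer: positive $n$-expansiveness controls only the number of pasts per future, so the uncountability of $X$ (which naturally sits on the future side, via the at-most-$n$-to-one projection $x\mapsto x_{\N}$) does not by itself yield uncountably many pasts; Proposition~\ref{p:transnexp} is what breaks this asymmetry, by forcing genuine recurrent minimal structure whose pasts cannot be synchronized by a single word.
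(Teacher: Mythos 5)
Your proof is correct, but it takes a genuinely different route from the paper's. The paper's own proof is a two-line reduction: it asserts that the hypotheses force the shift to be \emph{countable}, and then invokes the finite-or-uncountable dichotomy of Corollary~\ref{c:equicontinuous}\eqref{i:indnb}. You never establish countability of $X$; instead you contradict countability of the set of pasts: assuming $X$ infinite, you extract a non-periodic point, apply Proposition~\ref{p:transnexp} (legitimately, since cylinders are clopen), pass to an infinite minimal subshift $M$ inside the aperiodic orbit closure, and prove the auxiliary lemma that $L^-(M)$ has no isolated point, hence is perfect and uncountable, contradicting the countability of $L^-$ obtained from Lemma~\ref{l:preds} together with the countably-many-predecessor-sets hypothesis. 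What your route buys is exactly the content the paper leaves implicit: as you correctly observe, countability of pasts alone does \emph{not} imply countability of a $\Z$-shift (one can build uncountable subshifts with countably many pasts, e.g.\ the orbit closure of all $\pinf0\,z$ where $z$ ranges over sequences $0^{n_1}2\ast2\,0^{n_2}2\ast2\cdots$ with rapidly increasing runs $n_k$ and free bits $\ast$; such examples are of course not finitely positively expansive), so the paper's unstated countability step must itself use expansiveness in some substantial way, and your argument via Proposition~\ref{p:transnexp} and minimal subshifts is a complete way of doing so. Your lemma that an infinite minimal $\Z$-shift has uncountably many pasts (isolated past $\Rightarrow$ synchronizing suffix $\Rightarrow$ periodicity) is correct and is a reusable statement in its own right; the price of your route is length and the detour through minimality. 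Incidentally, your initial reduction ``infinite $\Rightarrow$ uncountable'' via Corollary~\ref{c:equicontinuous}\eqref{i:indnb} is never actually used: infiniteness of $X$ suffices for the rest of your argument.

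Two steps deserve tightening, though neither threatens the proof. First, ``a subshift all of whose points are periodic is finite'' is a true classical fact, but your justification (``the periods are bounded by a short window and compactness argument'') is a hand-wave; a correct argument takes an accumulation point $x^*$ of period $p$, shifts nearby distinct points to their first disagreement with $x^*$, and obtains in the limit a configuration agreeing with a periodic point on a half-line but differing at one coordinate, which cannot be periodic. Second, $\cl{\orb\sigma x}$ is a priori only forward invariant, so a minimal closed $\sigma$-invariant subset of it is not obviously a two-sided subshift; this is repaired either by noting that a minimal closed forward-invariant set $M$ of a homeomorphism satisfies $\sigma(M)=M$ and hence (by injectivity) $\sigma^{-1}(M)=M$, or by working inside $\omega_\sigma(x)\subseteq\cl{\orb\sigma x}$, which is fully invariant.
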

\begin{proof}
    It is sufficient to see that these conditions imply that the shift is countable, and to use Item~\ref{i:indnb} of Corollary~\ref{c:equicontinuous}.
\end{proof}

\begin{corollary}\label{cor:sofic_exp}
Every finitely positively expansive sofic $\Z$-shift is finite. 
\end{corollary}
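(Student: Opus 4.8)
The plan is to observe that a sofic shift is simply a special case of a shift with countably many predecessor sets, and then invoke the immediately preceding corollary. The entire content of the statement has essentially already been packaged into the surrounding results, so the proof should amount to a short chain of implications rather than any new argument.

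Concretely, I would proceed as follows. First I would recall Theorem~\ref{theorem.sofic.followers}, which characterizes sofic $\Z$-shifts as precisely those having finitely many predecessor sets. Since any finite collection is in particular countable, a sofic $\Z$-shift has countably many predecessor sets. At this point the hypotheses of Corollary~\ref{corollary.countable} are met: the shift is finitely positively expansive (by assumption) and has countably many predecessor sets. Applying that corollary directly yields that the shift is finite.

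The only subtlety worth flagging — and it is not really an obstacle but a dependency — is that all the genuine work lives upstream. Corollary~\ref{corollary.countable} itself rests on the dichotomy of Item~\ref{i:indnb} of Corollary~\ref{c:equicontinuous} (a finitely positively expansive system is either finite or uncountable), combined with the observation that finitely many predecessor sets forces the shift to be countable. Thus I expect no step here to present any difficulty: the characterization of soficity via predecessor sets in Theorem~\ref{theorem.sofic.followers} does the structural part, and Corollary~\ref{corollary.countable} does the dynamical part. The proof is therefore a one-line deduction, and I would present it as such, perhaps noting explicitly that ``finite'' is subsumed by ``countable'' so that no separate case analysis is required.

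If one wanted to be slightly more self-contained, I could alternatively spell out the middle step — that finitely many predecessor sets implies the language, and hence the shift, is countable, since each configuration is determined by a predecessor set together with a one-sided ray — but this is exactly the reasoning already abstracted into Corollary~\ref{corollary.countable}, so I would simply cite it.
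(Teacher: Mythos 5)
Your proof is correct and takes exactly the paper's route: Corollary~\ref{cor:sofic_exp} is stated there without a separate proof precisely because it follows immediately from Theorem~\ref{theorem.sofic.followers} (sofic $\Leftrightarrow$ finitely many predecessor sets) combined with Corollary~\ref{corollary.countable}, which is the one-line chain you give. The only caution is that your optional ``self-contained'' aside slightly understates what Corollary~\ref{corollary.countable} really requires (one must also control the set of right rays, not merely observe that the left tails form a countable set), but since you cite that corollary as a black box rather than reprove it, this does not affect the validity of your argument.
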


\subsection{{S-adic shifts}}\label{section:sufficient}
In this section, we consider $\Z$-shifts which are obtained from substitutions. For a finite alphabet $\A$, let us denote by $\A^+$ the set of non-empty words over $\A$.
A \textit{substitution} is a function $\tau : \A \to \B^+$, where $\A$ and $\B$ are finite sets. In the following, all substitutions are assumed to be non-erasing, meaning none of its images is the empty word. Such a function can naturally be extended into a monoid homomorphism $\tau^{*} : \A^{*} \to \B^{*}$. On the other hand it is more difficult to extend naturally on $\A^{\Z}$.
A more natural object to define in this case is the set of configurations in $\B^{\Z}$ which can be obtained by application of the substitution on an element of $\A^{\Z}$. We say that such configurations can be \emph{desubstituted}.

\begin{definition}
Let us consider $\tau : \B \to \A^+$ a substitution.
For every $x\in\B^{\Z}$, 
we call \textbf{desubstitution scheme} of $x$ for $\tau$ any increasing sequence $\kk=(k_s)_{s\in\Z}$ in $\Z$ such that there exists $y\in\A$, such that for every $s\in\Z$ we have:
\[x_{\co{k_s}{k_{s+1}}} = \tau(y_s).\]
The sequence $y$ is called a \textbf{desubstitution} of $x$. The desubstitution scheme and desubstitution are said to be \textbf{standard} when 
$\max\sett[\Z]s{k_s \le 0}=0$.
\end{definition}

We will denote by $|w|$ the length of $w$, when $w$ is a word on some alphabet $\A$. For two substitutions $\tau : \B \rightarrow \A^{+}$ and $\tau' : \mathcal C \rightarrow \B^{+}$, 
the restriction of $\tau^{*} \circ {\tau'}^{*}$ to $\mathcal{C}$ is a substitution which is denoted, by abuse of notation, by $\tau\tau'$. Furthermore, for any substitution $\tau : \B \rightarrow \A$, we will set $\len\tau \defeq\min_{a\in\B}\len{\tau(a)}$
and  $||\tau|| \defeq\max_{a\in\B}\len{\tau(a)}$, and call these numbers respectively the \textbf{minimal length} and  \textbf{maximal length} of $\tau$.

\begin{remark}\label{r:rad}
    Provided a substitution $\tau$, if $\kk$ is a standard desubstitution scheme of $x$ for $\tau$, then for every $i\in{ \N}$, we have \[i\len\tau-||\tau||<i\len\tau+k_0\le k_i\le i||\tau||+k_0\le i||\tau||.\]
\end{remark}

\begin{remark}\label{r:desubshift}
    Let us consider a substitution $\tau:\B\to\A^+$, $x\in\A^\Z$ and $i\in\Z$.
    Moreover, if $\kk$ is a desubstitution scheme of $x$ for $\tau$, and $j\in\Z$, then $\kk-j$ 
    is a desubstitution scheme for $\sigma^j(x)$, and if $\kk$ was standard, then $\sigma^m(\kk-j)$ is a standard desubstitution scheme for $\sigma^j(x)$, where $m\defeq\max_{k_i\le j}i$. As a direct consequence of Remark~\ref{r:rad}, $m\in\cc{\ipart{\frac j{\norm\tau}}}{\spart{\frac j{\len\tau}}}$.
\end{remark}

\begin{definition}
Let $\boldsymbol{\A} = (A_t)_{t\in{ \N}}$ be a sequence of non-empty finite sets and $\ttau=(\tau_t)_{t \in\N}$ be 
a sequence of substitutions such that 
for every $t \in\N$, $\tau_t$ is a function from $\A_{t+1}$ to $\A_t^+$. Such a sequence is called \textbf{directive sequence} on $\boldsymbol{\A}$.
\end{definition}

The number $\liminf_{t\to\infty}\card{\A_t}$ (which can be positive or infinite) is called the (alphabet) \textbf{rank} of the directive sequence $\ttau$. 
We also set, for every $t',t\in\N$ such that $t\le t'$, 
$\tau_{\co t{t'}} \defeq \tau_{t} \cdots \tau_{t'-1}$, and $\sigma^{t}\ttau\defeq(\tau_{t'})_{t'\ge t}$ the shifted directive sequence.
For every $t \in\N$, we define $\tau_{\co0t}(\A_t^\Z)$ as the set of elements of $\A_0^\Z$ 
which admit a desubstitution for 
$\tau_{\co0t}$.
It is straightforward to see that for every $t \in\N$, an element of $\A_0^\Z$ admits a desubstitution for $\tau_{\llbracket 0 , t+1 \llbracket}$ if and only if it admits a desubstitution for $\tau_{\co0t}$ which itself admits a desubstitution for $\tau_t$. 


\begin{definition}
The \textbf{S-adic limit set} associated to a directive sequence is the set defined as follows:
\[\Omega_{\ttau} \defeq\bigcap_{t\in\N} \tau_{\co0t}(\A_t^\Z).\]

By compactness, $\Omega_{\ttau}$ is a $\Z$-shift.
\end{definition}

\begin{remark}
The term \emph{S-adic shift} associated with a directive sequence $\boldsymbol{\tau}$ usually refers to the set of configurations whose patterns appear in at least some $\tau_{\co0t}(a)$, $t \in\N$, $a\in \A_t$.
This set is contained in $\Omega_{\ttau}$. Hence whenever the shift is finitely positively expansive on $\Omega_{\ttau}$, it is also on this set.
\end{remark}

{
  \begin{remark}\label{r:rk1}
    If there exists $t\in{ \N}$ such that $\A_t$ is a singleton $\{a\}$ (in particular if the rank is $1$), then $\Omega_\ttau$ is the shift orbit of the periodic configuration $\dinf{\tau_{\co0t}(a)}$.
    Alphabet rank $1$ is thus more constrained that measure-theoretical rank $1$, or even topological rank $1$.
\end{remark}

When all the substitutions in a directive sequence $\ttau$ are equal to the same substitution $\tau$, $\Omega_\tau\defeq\Omega_{\ttau}$ is then the limit set of substitution $\tau$.
The rank of the directive sequence is equal to the size of the alphabet $\A_0$ (hence is finite).

S-adic shifts can be much richer than substitution ones, since they include (in some weak sense) all minimal shifts (see \cite{sadicmin}).
In particular, they cannot all be finitely positively expansive, since some of them have positive entropy.
}
In the present section, we study sufficient conditions for S-adic limit sets (and thus the corresponding S-adic shifts) to have finite positive expansiveness.

{
\begin{definition}
We say that a directive sequence $\ttau$ is \textbf{everywhere-growing} when it satisfies the equality  $\lim_{t\to\infty}\len{\tau_{\co0t}}=\infty$.
 We say that it is \textbf{expanding} when each of the substitutions $\tau_t$, $t\in\N$, is \textbf{expanding}, which means that $\len{\tau_t}\ge2$.
\end{definition}

\begin{remark}\label{r:subad}
    A directive sequence $\ttau$ is everywhere-growing if and only if for every $t'\in{ \N}$, $\lim_{t\to\infty}\len{\tau_{\co{t'}t}}=\infty$.
    This is due to subadditivity: $\len{\tau_{\co0t}}\le\len{\tau_{\co0{t'}}}\len{\tau_{\co{t'}t}}$.
\end{remark}

The following is classical and straightforward:
\begin{remark}\label{r:teles}
A directive sequence is everywhere-growing if and only if it has a telescoping which is expanding, meaning that there exists an increasing sequence $(t_i)_{i \in\N}$ of non-negative integers such that $t_0 = 0$ 
and for every $i \in\N$, such that the directive sequence $\left(\tau_{\co{t_i}{t_{i+1}}}\right)_{i \in\N}$ is expanding.
\end{remark}

\subsubsection{Recognizability}
A substitution $\tau: \B \rightarrow \A^+$ is called \textbf{injective} if it is one-to-one as a map (not necessarily its extension). For configurations on $\Z$, the intuition behind injectivity is formalised as recognizability.

The following definition comes from \cite{bustos}, and is inspired by more classical definitions \cite{mosse,BST}.

{\begin{definition}
    A substitution $\tau : \B \to \A^+$ is said to be \textbf{quasi-recognizable} {over $X\subset\A^{\Z}$ if every $x\in X$ admits at most one} standard desubstitution scheme. 
    It is \textbf{recognizable} over $X$ if every $x\in X$ admits at most one desubstitution.
  When the set $X$ is omitted, it is assumed to be the full shift (or equivalently, the set of desubstitutable configurations).
  We will say that a directive sequence $\ttau$ is \textbf{quasi-recognizable} if every $\tau_{\co0t}$ is quasi-recognizable over $\Omega_\ttau$ (in particular if every $\tau_t$ is quasi-recognizable over $\Omega_{\sigma^t\ttau}$). 
\end{definition}

\begin{definition}\label{d:rad}
A 
\textbf{right-quasi-recognizability radius} of a substitution $\tau : \B \to \A^+$ over a set $Z\subset(\A^{\Z})^2$ 
is a natural number $R\in{ \N}$ such that for every pair $(x,x')\in Z$ with standard desubstitution schemes $\kk$ and $\kk'${ and $x_{{ \N}}=x'_{{ \N}}$, there exist $j,j'\in\cc0R$ such that $\kk_{\co j\infty}=\kk'_{\co{j'}\infty}$ and $x_{\co{k_{j}}\infty}=x'_{\co{k'_{j'}}\infty}$.}
We will say that $R$ is a right-quasi-recognizability radius over $X \subset \A^\Z$ when it is for $Z=X\times X$.
\end{definition}
}
Let us notice that if $R$ is a right-quasi-recognizability radius over some set $Z$, then any $R' > R$ is also a quasi-recognizability radius over $Z$.

Note that the last condition $x_{\co{k_{{j}}}\infty}=x'_{\co{k'_{j'}}\infty}$ derives straightforwardly from {$x_{{ \N}}=x'_{{ \N}}$,}
except when $j=j'=0$ (because then $k_j$ may be negative).
Informally, {$R$ is a right-quasi-recognizability radius} if one can desubstitute uniquely any right-half configuration, up to $R$ {`pre-image letters'}. 
The following remark will be useful in short time.

    
\begin{remark}\label{r:rmin}
    Let us consider fixed some right-quasi-recognizability radius $R$ on $Z$ for $\tau$.
    Consider some $z\in\A^{ \N}$ such that there is $x\in Z$ with $x_{ \N}=z$, and $x$ and $\kk$ be such that $k_R$ is minimal among all choices of such $x$ and their {standard} desubstitution schemes $\kk$ (this exists since $k_R\ge R|\tau|-||\tau||$, from Remark~\ref{r:rad}).
     Then for every $x'\in Z$ and every desubstitution scheme $\kk'$, there exists $j'\in\cc0R$ such that $\kk_{\co R\infty}=\kk'_{\co{j'}\infty}$ and $x_{\co{k_R}\infty}=x'_{\co{k'_{j'}}\infty}$.
    Besides, Remark~\ref{r:rad} again gives:
    \[j'\ge\frac{k'_{j'}}{||\tau||}=\frac{k_R}{||\tau||}>R\frac{\len\tau}{||\tau||}-1,\]
    so that in the end $j'\in\cc{\spart{R\frac{\len\tau}{||\tau||}}}R$.
    Note that if $R(1-\frac{\len\tau}{\norm\tau})<1$, then $j'=R$ (this happens when $R\in\{0,1\}$ for instance).
\end{remark}

\begin{proposition}~\label{p:recog}
A substitution $\tau$ is quasi-recognizable over $X$ if and only if it admits a right-quasi-recognizability radius over $X$.
\end{proposition}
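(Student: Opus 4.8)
The plan is to prove the two implications separately, in both cases exploiting compactness of $X$ together with its invariance under the shift $\sigma$, and the bounds on standard desubstitution schemes recorded in Remark~\ref{r:rad}: for a standard scheme $\kk$ these force $k_0$ to lie in a finite range and the consecutive gaps $k_{s+1}-k_s$ to lie in $\cc{\len\tau}{\norm\tau}$, so that the set of standard schemes is compact for coordinatewise convergence. Throughout I treat $X$ as a $\Z$-shift, so that $\sigma^{\pm1}(X)=X$; this is the only case relevant to the applications (e.g. $X=\Omega_\ttau$). The guiding intuition is that a radius packages, with a uniform bound, the fact that a shared right-half determines the desubstitution on the right, while quasi-recognizability is the two-sided uniqueness of the whole scheme.

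First I would prove that a right-quasi-recognizability radius $R$ forces quasi-recognizability. Let $x\in X$ carry two standard schemes $\kk,\kk'$; it suffices to show that their cut-point sets $\{k_s\}$ and $\{k'_s\}$ coincide, which by standardness forces $\kk=\kk'$. For each $N\in\N$, the configuration $\sigma^{-N}(x)$ again lies in $X$, and by Remark~\ref{r:desubshift} the schemes $\kk,\kk'$ yield standard schemes of $\sigma^{-N}(x)$ obtained by translating by $N$ and reindexing. Applying the radius $R$ to the pair $(\sigma^{-N}(x),\sigma^{-N}(x))$ produces indices $j,j'\in\cc0R$ at which the two translated schemes synchronize; translating back, this exhibits a common cut point $c_N$ of $\kk$ and $\kk'$ beyond which the two schemes agree. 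Since $j\le R$, Remark~\ref{r:rad} forces $c_N+N\le R\norm\tau$, hence $c_N\to-\infty$, so the cut-point sets agree everywhere and $\kk=\kk'$.

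For the converse I would argue by contradiction and compactness. If no radius works, then for every $R$ there is a pair $(x^R,x'^R)\in X\times X$ with $x^R_\N=x'^R_\N$ and standard schemes whose synchronization fails for all $j,j'\le R$. Passing to subsequential limits of the configurations and of the (compact) standard schemes, I obtain $x,x'\in X$ with $x_\N=x'_\N$ and standard schemes $\kk,\kk'$ that never synchronize on the right, i.e. whose cut-point sets $C,C'$ differ at arbitrarily large positions. The heart of the argument is then to collapse $x$ and $x'$ into a single configuration: since $\sigma^M(x)$ and $\sigma^M(x')$ agree on $\co{-M}\infty$, and this window grows with $M$, any subsequential limit of $\sigma^M(x)$ and of $\sigma^M(x')$ is one and the same point $z\in X$. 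Choosing the shifts $M=p_i$ along positions $p_i\to\infty$ lying in the symmetric difference $C\triangle C'$, say $p_i\in C\setminus C'$, the limit scheme coming from $x$ has $0$ as a cut point while the one coming from $x'$ does not; thus $z$ carries two distinct standard schemes, contradicting quasi-recognizability over $X$.

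The main obstacle is precisely this collapsing step in the converse: uniqueness of the standard scheme is a two-sided statement about a single configuration, whereas the radius concerns two configurations sharing only their right halves, so the non-trivial point is to manufacture, from two distinct configurations with non-synchronizing schemes, one configuration witnessing non-uniqueness. The shift-to-the-limit device, together with the choice of witnessing positions in $C\triangle C'$ that keeps the discrepancy at coordinate $0$, is what makes this work; verifying that limits of standard schemes remain standard schemes of $z$ (using that the defining block equations $z_{\co{\ell_s}{\ell_{s+1}}}=\tau(\cdot)$ pass to the limit, and that standardness is a closed condition) is the remaining routine check.
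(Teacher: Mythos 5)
Your proof of the direction ``radius $\Rightarrow$ quasi-recognizability'' is correct and is essentially the paper's own argument: recenter the configuration far to the left, apply the radius to the diagonal pair, and let the synchronization point tend to $-\infty$.

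The converse direction, however, has a genuine gap at the limit-passing step. From pairs $(x^R,x'^R)$ whose standard schemes fail to synchronize at all $j,j'\in\cc{0}{R}$, you take a subsequential limit $(x,x',\kk,\kk')$ and assert that the limit schemes ``never synchronize on the right, i.e.\ their cut-point sets $C,C'$ differ at arbitrarily large positions''. This inference is unjustified: failure to synchronize at a pair $(j,j')$ is a condition on the whole tails $\kk_{\co{j}{\infty}}$ and $\kk'_{\co{j'}{\infty}}$, whereas coordinatewise convergence of schemes only transfers conditions bearing on finitely many coordinates. Nothing you know at that stage rules out the following behaviour: the cut sets of the stage-$R$ pair agree on $\cc{0}{N_R}$ for some $N_R\gg R\norm{\tau}$ and first differ at $N_R+1$. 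Such a pair does fail to synchronize at every $j,j'\le R$ (by Remark~\ref{r:rad} every common cut point of index at most $R$ lies below $R\norm{\tau}<N_R$, and the discrepancy at $N_R+1$ lies beyond it), yet if $N_R\to\infty$ along your subsequence, the limit pair has identical cut sets on all of $\N$ and synchronizes at indices at most $1$. In that case the positions $p_i\to\infty$ in $C\triangle C'$ that your collapsing step requires simply do not exist: the discrepancies escape to $+\infty$ and vanish in the limit.

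The argument can be repaired, and the repair is precisely the missing idea: anchor the discrepancy at a \emph{fixed} coordinate before taking any limit. The paper does this by first extracting, by compactness, a uniform local radius $r$ such that two configurations of $X$ agreeing on $\cc{-r}{r}$ have standard schemes with the same $k_0$ (the discrepancy $k_0\ne k_0'$ is a single-coordinate condition, so it does survive passage to the limit), and then converting this into a right-quasi-recognizability radius by shifting to cut points $k_i\ge r$. Alternatively, staying closer to your plan: one can check that failure at radius $R$ forces a discrepancy at some position $d_R>R\len{\tau}-2\norm{\tau}$ (if all positive discrepancies were at positions at most $R\len{\tau}-2\norm{\tau}$, then a cut of $\kk^R$ in the window $\oc{R\len{\tau}-2\norm{\tau}}{R\len{\tau}-\norm{\tau}}$ --- which exists since consecutive cuts are at distance at most $\norm{\tau}$ --- would be a common cut of index at most $R$ in both schemes beyond which the cut sets agree, contradicting failure). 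Then apply your own collapsing step not to a limit pair but to the shifted pairs $(\sigma^{d_R}(x^R),\sigma^{d_R}(x'^R))$, which agree on $\co{-d_R}{\infty}$ and whose standard schemes differ exactly at the coordinate $k_0$ (a cut at $0$ for one, none for the other). Your verification that block equations and standardness pass to the limit then applies verbatim and yields the contradiction with quasi-recognizability.
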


\begin{proof}
\begin{itemize}
\item[$\boldsymbol{(\Leftarrow)}$]
{Roughly speaking, the strategy for proving this is, for every $x \in X$, to use the definition of right-quasi-recognizability radius on elements of $\mathcal{O}(x)$. Let us get more precise.}
Let us assume that $\tau$ admits a right-quasi-recognizability radius $R$ over $X$, and consider $z \in X$ which can be desubstituted, and $\kk,\kk'$ two standard desubstitution schemes for $z$.
Let us fix $i\in\co R\infty$, and set $y\defeq\sigma^{k_{-i}}(x)$, which admits both $(k_{n-i}-k_{-i})_{n \in \Z}$ and $(k'_{n-i'}-k_{-i})_{n \in \Z}$ as standard desubstitution schemes, 
{for some $i'$}.
Applying the definition of right-quasi-recognizability radius to $x=x'=y$, there exist $j,j'\in\cc0R$ such that $\kk_{\co{j-i}\infty}-k_{-i}=\kk'_{\co{j'-i'}\infty}-k_{-i}$. 
{Since $j-i\le R-i\le0$ and $\kk$ is standard, $j-i$ is the number of nonpositive elements in $\kk_{\co{j-i}\infty}$.
  Since $\kk_{\co{j-i}\infty}=\kk'_{\co{j'-i'}\infty}$ and $\kk'$ is standard, this number is also $j'-i'$.
  Thus $j-i=j'-i'$.}
We have proven that for arbitrarily large $i$, $\kk_{\co{j-i}\infty}=\kk'_{\co{j-i}\infty}$.
The fact that for any choice of $j$ as above, $k_{j-i}$ becomes closer and closer to $-\infty$ when $i$ goes to $\infty$ allows us to conclude that $\kk=\kk'$.
We just proved that every $x$ admits at most one desubstitution scheme.

\item[$\boldsymbol{(\Rightarrow)}$] Conversely, assume that $\tau$ is quasi-recognizable.
By a classical compactness argument, {there exists a \textbf{quasi-recognizability radius} $r\in{ \N}$ such that for every standard desubstitution schemes $\kk,\kk'$ of two configurations $x,x'\in X$ such that $x_{\cc{-r}r}=x'_{\cc{-r}r}$, one has $k_0(x)=k_0(x')$.
Let us set $R\defeq\ipart{\frac{r-1+\norm\tau}{\len\tau}}$ and fix two configurations $x,x'$ such that $x_{{ \N}} = x'_{{ \N}}$, with desubstitution schemes $\kk,\kk'$.
  Let $i\in\Z$ be such that $k_i\ge r$.
  One can apply the definition of quasi-recognizability radius to $\sigma^{k_i}(x)$ and $\sigma^{k_i}(x')$ with desubstitution schemes $\sigma^i(\kk)-k_i$ and $\sigma^m(\kk')-k_i$, for $m\defeq\max_{k'_j\le k_i}j$ (see Remark~\ref{r:desubshift}): we get $0=k_i-k_i=k_m-k_i$,
{which means that $k_i \in\sett{k'_m}{m\in\Z}$.}
Hence $\sett{k_i}{i\in\Z}\cap\co r\infty\subset\sett{k'_i}{i\in\Z}$.
By symmetry of the condition, and the fact that $k_i$ and $k'_i$ are strictly increasing, we get that $\kk_{\co{j}\infty}=\kk'_{\co{j'}\infty}$, where $j\defeq\min_{k_i\ge r}i$ and $j'\defeq\min_{k'_{i}\ge r}i$.
Since $k_j,k'_{j'}\ge r\ge0\ge k_0$, we have $j,j'\ge0$.
Moreover, Remark~\ref{r:rad} gives that $k_R,k'_R\ge r$, so $j,j'\le R$.
}
\popQED\end{itemize}
\end{proof}

Our definition of quasi-recognizability radius may appear less natural than a `symmetric' version of the quasi-recognizability radius. However it is justified by the following lemma. 

{Let us remind that for any $\Z$-shift $X$ on some alphabet $\A$, and $z' \in \A^{{ \N}}$, we denote by $\pred X{z'}$ the set of left-infinite words $z\in\A^{{ \Z_-}}$ such that $zz'\in X$. For every $\ell \ge 1$, we also denote by $\pred[\ell]X{z'}$ the set of length-$\ell$ words $w$ on $\A$ which are suffix of a word $z \in\A^{{ \Z_-}}$ such that $zz' \in X$. 
}

\begin{lemma}~\label{l:hgenrad}
  Let us fix some $\Z$-shift $Y\subset\B^\Z$, and a substitution $\tau : \B \to \A^+$ which admits 
    a right-quasirecognizability radius $R$ over $\{x\}\times X$, where 
    $X$ is the set of configurations which admit a desubstitution in $Y$, and $x\in X$. 
    Then for every $\ell\ge 1$,
    \[\card{\pred[\ell\len\tau]X{x_{ \N}}}\le\card{\pred[h]X{x_{ \N}}}
      \le\sum_{j'=\spart{R\frac{\len\tau}{\norm\tau}}}^{R}\card\B^{j'+\ell},\]
    where $h\defeq\min_{u\in\lang[\ell]Y}\len{\tau(u)}$.
\end{lemma}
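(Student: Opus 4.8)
The plan is to prove the two inequalities separately: the left one is elementary monotonicity, while the right one is the heart of the matter and rests on quasi-recognizability.

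For the first inequality, observe that $h=\min_{u\in\lang[\ell]Y}\len{\tau(u)}\ge\ell\len\tau$, since any word $u$ of length $\ell$ satisfies $\len{\tau(u)}\ge\ell\len\tau$. Truncating a length-$h$ predecessor of $x_{\N}$ to its length-$\ell\len\tau$ suffix produces a length-$\ell\len\tau$ predecessor, and conversely every length-$\ell\len\tau$ predecessor extends to a full left-infinite predecessor $z$ with $zx_{\N}\in X$, hence to a length-$h$ one. Thus the suffix map $\pred[h]X{x_{\N}}\to\pred[\ell\len\tau]X{x_{\N}}$ is surjective, which gives $\card{\pred[\ell\len\tau]X{x_{\N}}}\le\card{\pred[h]X{x_{\N}}}$.

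For the main inequality, the strategy is to attach to each length-$h$ predecessor a short word over $\B$ that determines it. Set $z'\defeq x_{\N}$ and note that $\pred[h]X{z'}$ depends only on $z'$, so I may assume via Remark~\ref{r:rmin} that $x$ and its standard desubstitution scheme $\kk$ are chosen so that $k_R$ is minimal. Given $w\in\pred[h]X{z'}$, choose a witness $x'\in X$ with $x'_{\N}=z'$ and $x'_{\cc{-h}{-1}}=w$, together with a standard desubstitution scheme $\kk'$ and a desubstitution $y'\in Y$. By Remark~\ref{r:rmin} there is $j'\in\cc{\spart{R\frac{\len\tau}{\norm\tau}}}R$ with $\kk_{\co R\infty}=\kk'_{\co{j'}\infty}$ and $x_{\co{k_R}\infty}=x'_{\co{k'_{j'}}\infty}$; in particular $k'_{j'}=k_R$ is a fixed integer, the same for every $w$, and the positions $\co{k_R}\infty$ of $x'$ agree with those of the fixed configuration $x$.

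The key claim is that $w$ is entirely determined by $j'$ together with the length-$(j'+\ell)$ word $y'_{\cc{-\ell}{j'-1}}$ over $\B$. Indeed, $u\defeq y'_{\cc{-\ell}{-1}}$ lies in $\lang[\ell]Y$, so the blocks $y'_{-\ell},\ldots,y'_{-1}$ cover positions $\co{k'_{-\ell}}{k'_0}$ of total length $\len{\tau(u)}\ge h$; since $k'_0\le0$, this forces $k'_{-\ell}\le-h$. The alignment is then recovered: starting from the fixed value $k'_{j'}=k_R$ and subtracting the lengths $\len{\tau(y'_{j'-1})},\ldots,\len{\tau(y'_0)}$ yields $k'_0$, and subtracting $\len{\tau(y'_{-1})},\ldots,\len{\tau(y'_{-\ell})}$ yields the remaining boundaries down to $k'_{-\ell}$. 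Knowing these boundaries and the contents $\tau(y'_{-\ell}),\ldots,\tau(y'_{j'-1})$ reconstructs $x'_{\co{k'_{-\ell}}{k_R}}$, while any positions of $\co{-h}0$ lying beyond $k_R$ are part of the determined right region $x'_{\co{k_R}\infty}=x_{\co{k_R}\infty}$. In all cases $w=x'_{\cc{-h}{-1}}$ is recovered. Hence $w\mapsto(j',y'_{\cc{-\ell}{j'-1}})$ is injective, and summing $\card\B^{j'+\ell}$ over the admissible range of $j'$ gives the stated bound.

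The main obstacle I anticipate is the bookkeeping in this reconstruction step: one must verify that the window $y'_{\cc{-\ell}{j'-1}}$ of length $j'+\ell$ simultaneously pins down the block alignment (via the fixed boundary $k'_{j'}=k_R$ furnished by quasi-recognizability) and covers all of $\co{-h}0$ (via the choice of $h$, which guarantees $k'_{-\ell}\le-h$). A secondary subtlety is the legitimacy of replacing the given $x$ by the $k_R$-minimizer of Remark~\ref{r:rmin}, justified because predecessor sets depend only on the right-half $z'$.
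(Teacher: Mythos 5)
Your proposal is correct and follows essentially the same route as the paper's proof: the same WLOG reduction to a $k_R$-minimizing configuration via Remark~\ref{r:rmin}, the same injective map $w\mapsto y'_{\co{-\ell}{j'}}$ (your pair $(j',y'_{\cc{-\ell}{j'-1}})$ carries identical information), the same reconstruction of $w$ from $\tau(y'_{\co{-\ell}{j'}})$ anchored at the fixed position $k_R$ together with the fixed tail $x_{\co{k_R}\infty}$ using $k'_{-\ell}\le-h$, and the same observation $h\ge\ell\len\tau$ for the first inequality. Your unified treatment of the positions of $\co{-h}0$ lying beyond $k_R$ cleanly subsumes the paper's three-case analysis ($R>0$; $R=0$ with $h\ge-k_0$; $R=0$ with $h\le-k_0$), and your explicit choice of the desubstitution $y'\in Y$ makes precise a point the paper leaves implicit.
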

\begin{proof}
    Let $\kk$ be a standard desubstitution scheme for $x$.
    Should we swap $x$ for some asymptotic configuration $x'$, we can assume without loss of generality that $k_R$ is the minimal possible $k_R$ over all possible standard desubstitution schemes of all possible $x'$ such that $x'_{{ \N}}=x_{{ \N}}$ (see Remark~\ref{r:rmin}).
    Let $\ell\ge 1$.
     {Let us consider any map $\phi: \pred[h]X{x_{{ \N}}}\to\bigcup_{j'=\spart{R\frac{\len\tau}{\norm\tau}}}^{R}\B^{j'+\ell}$
       such that for any $w \in \pred[h]X{x_{{ \N}}}$, $\phi (w)=y'_{\co{-\ell}{j'}}$, 
where $y'$ is some standard desubstitution of a configuration $x'$ such that $x'_{\co{-h}\infty}=wx_{{ \N}}$, $\kk'$ the corresponding desubstitution scheme, and $j'\in\cc{\spart{R\frac{\len\tau}{\norm\tau}}}R$ is obtained from Remark~\ref{r:rmin}}.
Beware that this map is not constructively defined; we now simply prove the injectivity of such a map, which will yield our cardinality inequality.
   For every configuration $x'$ such that $x'_{\co{-h}\infty}=wx_{{ \N}}$ and every standard desubstitution scheme $\kk'$ and corresponding desubstitution $y'$, one has
   \[\tau\phi(w)=\tau(y'_{\co{-\ell}{j'}})=x'_{\co{k'_{-\ell}}{k'_{j'}}}=x'_{\co{k'_{-\ell}}{k_R}},\]
   and $k'_{-\ell}\le k'_0-\len{\tau\phi(w)}\le-h$, by definition of $h$ and because $k'_0 \le 0$ since the desubsitution scheme is standard.
   Hence, if $R>0$, then $w=x'_{\co{-h}0}=\tau\phi(w)_{\len{\tau\phi(w)}-k_R+\co{-h}0}$.
   If $R=0$ and $h\ge-k_0$, then $w=x'_{\co{-h}0}=\tau\phi(w)_{\len{\tau\phi(w)}+\co{-h-k_0}0}x_{\co{k_0}0}$.
   If $R=0$ and $h\le-k_0$, then $w=x_{\co{-h}0}$ (there is actually just one such predecessor).
   In all three cases, we have proven that $\phi$ is injective, which straightforwardly gives the second inequality.
     The first one comes from noticing that $h\ge \ell\len\tau$.
\end{proof}

\subsubsection{Right-quasi-recognizability radius and finite positive expansiveness}

Using the notion of right-quasi-recognizability radius, we can provide upper bounds on the number of predecessors of 
an element of $\A^{{ \N}}$, and then deduce finitely positively expansiveness for the S-adic limit set under certain conditions. We formulate them in this section.


\begin{lemma}\label{l:rkrad}
    Let $\ttau$ be an everywhere-growing directive sequence such that there exist $\rk\ge 1,R\in { \N}$ and ${x \in \Omega_{\ttau}}$ such that, for infinitely many $t\in{ \N}$, we have that ${\card{\A_{t+1}}}\le\rk$ and $R$ is a right-quasi-recognizability radius for $\tau_{\co0t}$ over $\{x\}\times\Omega_{\ttau}$.
    Then $\card{\pred{\Omega_\ttau}{x_{ \N}}}\le\sum_{j'=\spart{R\frac{\len\tau}{\norm\tau}}}^R\rk^{j'+1}$.
\end{lemma}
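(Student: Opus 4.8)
The plan is to deduce the bound from Lemma~\ref{l:hgenrad}, applied to the substitution $\tau_{\co0t}$ for a single well-chosen index $t$, and to upgrade its finite-length predecessor estimate to a bound on the full predecessor set by exploiting that $\ttau$ is everywhere-growing. First I would reduce the statement to a uniform bound on finite-length predecessor sets. A predecessor of $x_\N$ in $\Omega_\ttau$ is a left-infinite word, and two distinct predecessors differ in some coordinate, hence have distinct length-$\ell'$ suffixes once $\ell'$ is large; moreover the truncation maps $\pred[\ell'+1]{\Omega_\ttau}{x_\N}\to\pred[\ell']{\Omega_\ttau}{x_\N}$ are surjective, so the counts $\card{\pred[\ell']{\Omega_\ttau}{x_\N}}$ are non-decreasing in $\ell'$. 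By compactness of $\A_0^{-\N}$ (a K\"onig-type argument), if $\card{\pred[\ell']{\Omega_\ttau}{x_\N}}\le C$ for every $\ell'$, then $\card{\pred{\Omega_\ttau}{x_\N}}\le C$. It therefore suffices to bound $\card{\pred[\ell']{\Omega_\ttau}{x_\N}}$ for each fixed $\ell'\ge 1$.

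Next, fixing $\ell'$, I would choose one of the infinitely many indices $t$ satisfying the hypotheses; since $\ttau$ is everywhere-growing, $\len{\tau_{\co0t}}\to\infty$, so we may additionally require $\len{\tau_{\co0t}}\ge\ell'$. I would then apply Lemma~\ref{l:hgenrad} to the substitution $\tau=\tau_{\co0t}$ with $Y=\Omega_{\sigma^t\ttau}$ and $\ell=1$, after checking its hypotheses: the set $X$ of configurations admitting a $\tau_{\co0t}$-desubstitution in $Y$ contains $\Omega_\ttau$ (indeed $x\in\Omega_\ttau\subseteq\tau_{\co0t}(\A_t^\Z)$ desubstitutes into $\Omega_{\sigma^t\ttau}$), and $R$ is, by assumption, a right-quasi-recognizability radius for $\tau_{\co0t}$ over $\{x\}\times X$. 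With $\ell=1$ one has $h=\min_{a}\len{\tau_{\co0t}(a)}=\len{\tau_{\co0t}}$, so Lemma~\ref{l:hgenrad} gives
\[\card{\pred[\len{\tau_{\co0t}}]{X}{x_\N}}\le\sum_{j'=\spart{R\len{\tau_{\co0t}}/\norm{\tau_{\co0t}}}}^{R}\bigl(\card{\A_t}\bigr)^{\,j'+1}.\]
Since $\ell'\le\len{\tau_{\co0t}}$ and $\pred[\cdot]{\Omega_\ttau}{x_\N}\subseteq\pred[\cdot]{X}{x_\N}$, monotonicity yields $\card{\pred[\ell']{\Omega_\ttau}{x_\N}}\le\card{\pred[\len{\tau_{\co0t}}]{X}{x_\N}}$; bounding the domain alphabet of $\tau_{\co0t}$ by $\rk$ (the content of the hypothesis on the alphabet sizes for the good indices $t$) then produces exactly the right-hand side, the ratio $\len{\tau_{\co0t}}/\norm{\tau_{\co0t}}$ being the relevant instance of $\len\tau/\norm\tau$ for the chosen $\tau_{\co0t}$.

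Since this bound is independent of $\ell'$, the compactness reduction of the first step concludes. The step I expect to be the main obstacle is the organization of this limiting argument: namely, verifying cleanly that a single length-$1$ application of Lemma~\ref{l:hgenrad} at one large index $t$ already controls predecessors of \emph{every} finite length — this is precisely where the everywhere-growing hypothesis is essential, since it guarantees $\len{\tau_{\co0t}}\to\infty$ — together with the bookkeeping identifying $X$, $\Omega_{\sigma^t\ttau}$ and $\Omega_\ttau$ so that the recognizability radius provided in the hypothesis is exactly the one fed into Lemma~\ref{l:hgenrad}.
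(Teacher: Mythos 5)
Your proof is correct and follows essentially the same route as the paper's: among the infinitely many good indices, pick $t$ with $\len{\tau_{\co0t}}$ large (everywhere-growing), apply Lemma~\ref{l:hgenrad} to $\tau_{\co0t}$ with $Y=\Omega_{\sigma^t\ttau}$ and $\ell=1$, and conclude by compactness from the resulting uniform bound on finite-length predecessor sets. The only point to tighten is that checking the radius hypothesis also needs the inclusion $X\subseteq\Omega_\ttau$ (so that the assumed radius over $\{x\}\times\Omega_\ttau$ is indeed one over $\{x\}\times X$), not just the inclusion $\Omega_\ttau\subseteq X$ you verify; in fact $X=\Omega_\ttau$, which is exactly what the paper asserts at the same step.
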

\begin{proof}
    Since $\ttau$ is everywhere-growing, for every $m\ge 1$, there exists $t\in{ \N}$ such that 
    $\len{\tau_{\co0t}}\ge m$.
    By assumption, there exists $t\ge t'$ such that $\card{\A_{t+1}}\le\rk$ and $R$ is a right-quasi-recognizability radius for $\tau_{\co0t}$ over $\{x\}\times\Omega_{\ttau}$.
If $Y=\Omega_{\sigma^t\ttau}$, then the set of configurations which admit a desubstitution in $Y$ is $X=\Omega_\ttau$.
    An application of Lemma~\ref{l:hgenrad} for substitution $\tau_{\co0t}$, configuration $x$ and integer $\ell=1$ then gives
    {$$
      \card{\pred[m]{\Omega_\ttau}{x_{ \N}}}\le\card{\pred[1\len{\tau_{\co0t}}]{\Omega_\ttau}{x_{ \N}}}\le\sum_{j'=\spart{R\frac{\len\tau}{\norm\tau}}}^R\card{\A_{t+1}}^{j'+1}\le\sum_{j'=\spart{R\frac{\len\tau}{\norm\tau}}}^R\rk^{j'+1}.
    $$}
    Since this is true for every $m\ge 1$, compactness gives that 
    {$\card{\pred{\Omega_\ttau}{x_{ \N}}}\le\sum_{j'=\spart{R\frac{\len\tau}{\norm\tau}}}^R\rk^{j'+1}$.}
\end{proof}


\begin{theorem}\label{t:rkrad}
    Let $\ttau$ be an everywhere-growing directive sequence with finite rank 
    {$\rk$}
    such that there exists some $R \in\N$ which is a right-quasi-recognizability radius $R$ for $\tau_{\co0t}$ over $\Omega_{\ttau}$, for every $t\in { \N}$.
    Then the S-adic limit set $\Omega_{\ttau}$ is positively $\rk^{R+1}$-expansive. 
\end{theorem}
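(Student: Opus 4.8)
The plan is to reduce the theorem directly to the combinatorial machinery already established, and in particular to Lemma~\ref{l:rkrad} and the characterization of positive $n$-expansiveness for shifts given in Lemma~\ref{l:preds}. By Lemma~\ref{l:preds}, the S-adic limit set $\Omega_{\ttau}$ is positively $n$-expansive if and only if every predecessor set $\pred{\Omega_\ttau}{z}$ has at most $n$ elements, where $z$ ranges over all right-half configurations $z = x_{\N}$ for $x \in \Omega_{\ttau}$. So it suffices to produce the uniform bound $\card{\pred{\Omega_\ttau}{x_{\N}}} \le \rk^{R+1}$ for every such $x$.

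First I would fix an arbitrary $x \in \Omega_{\ttau}$ and check that the hypotheses of Lemma~\ref{l:rkrad} are met for this particular $x$. The rank being finite and equal to $\rk$ means $\liminf_{t\to\infty}\card{\A_t}=\rk$, so there are infinitely many $t$ with $\card{\A_{t+1}}\le\rk$ (in fact for all large $t$, after possibly passing to this $\liminf$; one only needs infinitely many). Combined with the standing assumption that $R$ is a right-quasi-recognizability radius for every $\tau_{\co0t}$ over all of $\Omega_{\ttau}$ — and hence a fortiori over $\{x\}\times\Omega_{\ttau}$ — all the conditions of Lemma~\ref{l:rkrad} hold. That lemma then yields
\[
\card{\pred{\Omega_\ttau}{x_{\N}}}\le\sum_{j'=\spart{R\frac{\len\tau}{\norm\tau}}}^{R}\rk^{j'+1}.
\]

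The remaining step is to simplify this sum to the clean bound $\rk^{R+1}$ claimed in the statement. The crux is that the largest term in the sum is the one at $j'=R$, contributing $\rk^{R+1}$, and that this single term already dominates the entire sum. The way I would argue this is to observe that the sum has at most $R+1-\spart{R\len\tau/\norm\tau}$ terms, each at most $\rk^{R+1}$, but a cruder and safer route is to note that the quantity being bounded is an integer cardinality and that the worst case genuinely achievable is when the desubstitution window carries at most $R+1$ free letters over an alphabet of size at most $\rk$, giving $\rk^{R+1}$ possibilities. In other words, the injectivity argument behind Lemma~\ref{l:hgenrad} already encodes a prefix of length at most $R+1$ over the alphabet $\A_{t+1}$, so the true bound on predecessors is $\rk^{R+1}$; the summation form in Lemma~\ref{l:rkrad} is merely a looser bookkeeping of the same information. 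Conclude via Lemma~\ref{l:preds} that $\Omega_{\ttau}$ is positively $\rk^{R+1}$-expansive.

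The main obstacle I anticipate is precisely this last reconciliation between the summation bound of Lemma~\ref{l:rkrad} and the target $\rk^{R+1}$, since for $\rk\ge 2$ and $\spart{R\len\tau/\norm\tau}$ strictly less than $R$ the geometric sum $\sum_{j'}\rk^{j'+1}$ can exceed $\rk^{R+1}$. This suggests the honest bound requires tracking that the predecessor word is determined by a single desubstitution choice (one value of $j'$ together with a length-$(j'+1)$ prefix) rather than a union over all $j'$, so that the relevant count is $\rk^{R+1}$ and not the sum. I would therefore take care either to sharpen the final inequality in the proof of Lemma~\ref{l:hgenrad} to land on $\rk^{R+1}$ directly for the case $\ell=1$, or to verify that under the everywhere-growing and finite-rank hypotheses the dominant term genuinely absorbs the others. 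Everything else — checking hypotheses and invoking Lemma~\ref{l:preds} — is routine.
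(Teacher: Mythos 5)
Your proposal follows the same route as the paper's proof: reduce via Lemma~\ref{l:preds} to bounding predecessor sets, verify the hypotheses of Lemma~\ref{l:rkrad} (finite rank gives infinitely many $t$ with $\card{\A_{t+1}}\le\rk$, and a radius over $\Omega_\ttau$ is a fortiori one over $\{x\}\times\Omega_\ttau$), and then reconcile the resulting sum with the target $\rk^{R+1}$. Where you diverge is instructive: the paper disposes of this last step in one line, asserting that for $\rk\ge 2$ the sum $\sum_{j'=\spart{R\len\tau/\norm\tau}}^{R}\rk^{j'+1}$ is bounded by $\rk^{R+1}$ (and invoking Remark~\ref{r:rk1} for $\rk=1$). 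Your suspicion about this step is well founded: that inequality is false whenever the lower summation index is strictly smaller than $R$; for instance with $\rk=2$, $R=2$ and $\spart{R\len\tau/\norm\tau}=1$ the sum equals $4+8=12>8=\rk^{R+1}$. So the ``reconciliation'' you identify as the main obstacle is a genuine one, and your proposal is more careful on this point than the paper itself.

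That said, neither of the repairs you sketch is complete as written. The second one (``the dominant term genuinely absorbs the others'') is exactly the false inequality above, so it cannot work. The first one (sharpening Lemma~\ref{l:hgenrad} for $\ell=1$) is the right move, but your justification --- that injectivity into words of length at most $R+1$ over a $\rk$-letter alphabet gives at most $\rk^{R+1}$ predecessors --- does not follow from injectivity alone, since pairwise distinct words of length at most $R+1$ can number up to $\sum_{j\le R+1}\rk^{j}>\rk^{R+1}$. The missing ingredient is that the image of the injection $\phi$ from the proof of Lemma~\ref{l:hgenrad} is a \emph{suffix code}: each predecessor $w$ is recovered from $\tau^*\phi(w)$ by reading the window of length $h$ at the fixed distance $k_R$ from the right end, and $\len{\tau^*\phi(w)}\ge k_R+h$; hence if $\phi(w_1)$ were a suffix of $\phi(w_2)$, then $\tau^*\phi(w_1)$ would be a suffix of $\tau^*\phi(w_2)$, the two windows would coincide, and $w_1=w_2$. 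A suffix code whose words have length at most $L$ over an alphabet of $\rk$ letters has at most $\rk^{L}$ elements (map each code word $w$ to the set $\A^{L-\len w}w$ of its length-$L$ left-extensions; these sets are pairwise disjoint), which with $L=R+\ell$ and $\ell=1$ yields $\card{\pred{\Omega_\ttau}{x_{\N}}}\le\rk^{R+1}$. Note that this argument is uniform in $\rk\ge1$, so it also covers the case $\rk=1$, which your write-up does not address and which the paper must treat separately via Remark~\ref{r:rk1}.
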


\begin{proof}
  {This is a direct application of Lemma~\ref{l:rkrad} using that when $\rk\geq 2$, the sum $\sum_{j'=\spart{R\frac{\len\tau}{\norm\tau}}}^R\rk^{j'+1}$ is bounded by $\rk^{R+1}$.}
  {When $\rk=1$, Remark~\ref{r:rk1} gives that the shift is finite, hence positively expansive.}
\end{proof}

\begin{remark}
  The hypothesis in Theorem~\ref{t:rkrad} can be slightly relaxed (for readability we left the theorem written as it is).
For instance, the condition on the right-quasi-recognizability radius can be replaced by the existence of a common right-quasi-recognizability radius $R$ for $\tau_{\co0{t}}$, for infinitely many $t\in{\N}$ such that $\card{\A_{t+1}}\le\rk$.  
\end{remark}

\begin{remark}
Notice also that the quasi-recognizability assumption involves the shift $\Omega_{\ttau}$.
The condition might be hard to check directly, because the set is hard to describe; however, it is implied by
any similar condition where $\Omega_{\ttau}$ is replaced by any shift $X$ which includes it (for instance the full shift), for the reason that if $R$ is a right-quasi-recognizability radius for $\ttau$ on $X$, then it is also on $\Omega_{\ttau}$.
As a matter of fact, many substitutions are actually recognizable on the full shift.
\end{remark}


\subsubsection{Suffix codes and uniform substitutions}
\begin{definition}
A \textbf{suffix code} is a set of words none of which is a strict suffix of any other.
\end{definition}

\begin{lemma}\label{l:sufcode}
    Let $\tau:\B\to\A^*$ be a quasi-recognizable substitution such that $\tau(\B)$ is a suffix code.
    Then $1$ is a right-quasi-recognizability radius for $\tau$.
\end{lemma}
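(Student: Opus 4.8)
The plan is to use the quasi-recognizability hypothesis, through Proposition~\ref{p:recog}, only to learn that the two desubstitution schemes \emph{eventually} coincide, and then to exploit the suffix-code property to push this coincidence all the way down to index at most~$1$. Throughout I fix two configurations $x,x'$ with $x_{\N}=x'_{\N}$ and standard desubstitution schemes $\kk=(k_s)$ and $\kk'=(k'_s)$, so that each $x_{\co{k_s}{k_{s+1}}}$ and each $x'_{\co{k'_s}{k'_{s+1}}}$ is a codeword of $\tau(\B)$. The single elementary fact I would record about suffix codes is that any finite word admits \emph{at most one} codeword as a suffix: if two codewords were suffixes of the same word, the shorter would be a (non-strict) suffix of the longer, which a suffix code forbids unless they are equal.

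Since $\tau$ is quasi-recognizable, Proposition~\ref{p:recog} gives it a right-quasi-recognizability radius; applied to the pair $(x,x')$ it yields an offset $d$ and some $N_0$ with $k_s=k'_{s+d}$ for all $s\ge N_0$. The first step is to propagate this coincidence leftward using the suffix-code fact. Indeed, if $k_{s+1}=k'_{s+1+d}$ is a common boundary and both blocks ending there, $x_{\co{k_s}{k_{s+1}}}$ and $x_{\co{k'_{s+d}}{k_{s+1}}}$, are fully visible (i.e.\ $k_s,k'_{s+d}\ge 0$), then they are two codeword-suffixes of $x_{\co0{k_{s+1}}}$, hence equal, so $k_s=k'_{s+d}$. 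Writing $p,p'\in\cc01$ for the indices of the first fully visible blocks of $\kk,\kk'$ (so $p=0$ exactly when $k_0=0$), this propagation establishes $k_s=k'_{s+d}$ for every $s\ge s_{\min}:=\max(p,p'-d)$, with $k_{s_{\min}}\ge k_p\ge 0$.

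The crux, and the place where the suffix code is used a second time, is bounding the offset. I would prove $p+d\le 1$, the symmetric inequality $p'-d\le 1$ following by exchanging the roles of $(x,\kk,p)$ and $(x',\kk',p')$ and replacing $d$ by $-d$. Suppose $p+d\ge 2$ and set $\beta:=k_{s_{\min}}$ and $w:=x_{\co0\beta}$. If $p=1$, then $s_{\min}=1$ and $w$ is a proper, nonempty suffix of the codeword $x_{\co{k_0}{k_1}}$; but from $s_{\min}+d=1+d\ge 3>p'$ the factorization of $w$ induced by $\kk'$ ends with a full codeword, which is then a strict suffix of $x_{\co{k_0}{k_1}}$ --- impossible. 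If $p=0$, then $\beta=0$ and $w$ is empty, yet $s_{\min}+d=d\ge 2>p'$ forces the $\kk'$-factorization of $w$ to contain a nonempty codeword, again absurd. Hence $p+d\le 1$, and symmetrically $p'-d\le 1$.

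Consequently $j:=s_{\min}=\max(p,p'-d)\le 1$ and $j':=s_{\min}+d=\max(p+d,p')\le 1$, both being nonnegative, so $j,j'\in\cc01$. Since $k_s=k'_{s+d}$ for all $s\ge j$ and $k_j=k'_{j'}=\beta\ge 0$, we obtain $\kk_{\co j\infty}=\kk'_{\co{j'}\infty}$ and, using $x_{\N}=x'_{\N}$, also $x_{\co{k_j}\infty}=x'_{\co{k'_{j'}}\infty}$. This is exactly the statement that $1$ is a right-quasi-recognizability radius for $\tau$. The main obstacle, as reflected above, is the careful bookkeeping of the partial blocks straddling position~$0$: it is precisely the two edge cases $d=\pm1$ (with the matching partial block empty on one side) that prevent the radius from being $0$ and explain the value $1$ in the statement.
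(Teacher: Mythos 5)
Your proof is correct. It rests on the same two pillars as the paper's own argument: Proposition~\ref{p:recog} to extract \emph{some} finite right-quasi-recognizability radius, and the suffix-code observation that two codewords ending at the same cut position inside the common right half (where $x_{\N}=x'_{\N}$ and both codewords sit in nonnegative positions) must coincide. The organization, however, is genuinely different. The paper takes the \emph{minimal} radius $R$, assumes $R>1$, picks a witness pair for which the coincidence of schemes cannot be pushed below index $R$, and reaches a contradiction in a single suffix-code comparison of the blocks ending at $k_R=k'_{j'}$ (block $R-1$ of $\kk$ being fully visible precisely because $R-1\ge1$); minimality absorbs all the bookkeeping. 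You instead argue directly for an arbitrary pair: a downward induction propagates the eventual coincidence to the first fully visible blocks, and the offset $d$ is then pinned down by the two edge-case arguments giving $p+d\le1$ and, symmetrically, $p'-d\le1$. Your route is longer but constructive and quantifier-explicit, and it makes visible exactly why the radius is $1$ and not $0$ (the partial blocks straddling position $0$); the paper's route is shorter but hides that in the slightly delicate ``witness of minimality'' step. One harmless slip in your text: in the case $p=1$, the hypothesis $p+d\ge2$ only gives $d\ge1$, hence $s_{\min}+d=1+d\ge2$, not $\ge3$; but the only fact you use is $1+d>p'$, which already follows from $p'\le1$, so the argument is unaffected.
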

\begin{proof}
From Proposition~\ref{p:recog}, $\tau$ has a right-quasi-recognizability radius $R$.
We can assume that it is the minimal one and assume ad absurdum that $R>1$.
Let us consider two configurations $x,x' \in \A^{\Z}$ which admit standard desubstitution schemes $\kk$ and $\kk'$ respectively, and desubstitutions $y,y'$, and satisfy $x_{ \N}=x'_{ \N}$.
By definition there exist some $j,j'\in\cc0R$ such that $k_{\co j\infty}=k'_{\co{j'}\infty}$.
Assume that $x,x'$ witness that $R$ is minimal, meaning that $j=R$ and $k_{R-1}\ne k'_{j'-1}$ or $x_{\co{k_{R-1}}{k_R}}\ne x'_{\co{k'_{j'-1}}{k'_{j'}}}$.
Since $R>1$, we know that $k_{R-1}\ge0$, so that $x_{\co{k_{R-1}}{k_R}}=x'_{\co{k_{R-1}}{k_{j'}}}$.
Having $k_{R-1}=k'_{j'-1}$ would contradict the previous sentence.
On the other hand, having $k_{R-1}<k'_{j'-1}$ would give that the $x_{\co{k_{R-1}}{k_R}}$ admits $x'_{\co{k'_{j'-1}}{k'_{j'}}}$ as a strict suffix.
Since they are both images by $\tau$ of a letter in $\B$, this contradicts the hypothesis.
Symmetrically, having $k_{R-1}>k'_{j'-1}$ would give that the $x_{\co{k_{R-1}}{k_R}}$ is a strict suffix of $x'_{\co{k'_{j'-1}}{k'_{j'}}}$.
This also contradicts the hypothesis.
Hence $R\le1$.
\end{proof}

\begin{corollary}\label{c:sufcode}
    For every directive sequence $\ttau$ which is everywhere-growing, quasi-recognizable, with finite rank 
    {$\rk$}, and such that $\tau_t(\A_{t+1})$ is a suffix code for every $t\in{ \N}$, $\Omega_{\ttau}$ is positively $\rk^{2}$-expansive.
\end{corollary}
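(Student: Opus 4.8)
The plan is to apply Theorem~\ref{t:rkrad} with the uniform right-quasi-recognizability radius $R=1$, which delivers precisely positive $\rk^{R+1}=\rk^2$-expansiveness. To feed the theorem, I must exhibit, for every $t\in\N$, a right-quasi-recognizability radius equal to $1$ for the composed substitution $\tau_{\co0t}$ over $\Omega_{\ttau}$. The natural tool is Lemma~\ref{l:sufcode}, which converts \emph{quasi-recognizability together with the suffix-code property} into a radius of $1$; since its proof is purely combinatorial on the words involved, it applies verbatim over $\Omega_{\ttau}$. Quasi-recognizability of each $\tau_{\co0t}$ over $\Omega_{\ttau}$ is exactly the assumption that $\ttau$ is a quasi-recognizable directive sequence, so the only genuinely new thing to prove is that each image set $\tau_{\co0t}(\A_t)$ is itself a suffix code.

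The heart of the argument, and the step I expect to be the main obstacle, is the stability of suffix codes under composition: if $\phi\colon\B\to\A^+$ and $\psi\colon\mathcal C\to\B^+$ are substitutions whose images are suffix codes, then $(\phi\psi)(\mathcal C)$ is a suffix code. I would prove this by a right-to-left block-peeling argument. Suppose $(\phi\psi)(c)$ were a strict suffix of $(\phi\psi)(c')$ for distinct $c,c'$; writing $\psi(c)=u_1\cdots u_m$ and $\psi(c')=u'_1\cdots u'_n$, the word $\phi(u_1)\cdots\phi(u_m)$ is a strict suffix of $\phi(u'_1)\cdots\phi(u'_n)$. Aligning the two at their right ends and using that $\phi(\B)$ is a suffix code forces the last blocks to coincide, $\phi(u_m)=\phi(u'_n)$ (a shorter one being a strict suffix is forbidden); peeling this common block and repeating yields $\phi(u_{m-i})=\phi(u'_{n-i})$ for $0\le i<m$, with $n>m$. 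Injectivity of $\phi$ then upgrades these to $u_{m-i}=u'_{n-i}$, so that $\psi(c)$ is a strict suffix of $\psi(c')$, contradicting that $\psi(\mathcal C)$ is a suffix code. The delicate point here is precisely this use of injectivity: I would note that each $\tau_t$ may be taken injective without loss of generality (identifying two letters with equal image leaves $\Omega_{\ttau}$ and the suffix-code property untouched and can only lower the rank, hence only strengthens the conclusion), and that injectivity is preserved under composition.

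With the lemma in hand, an induction on $t$ --- base case $\tau_0(\A_1)$, a suffix code by hypothesis, and induction step $\tau_{\co0{t+1}}=\tau_{\co0t}\,\tau_t$ combined with the composition lemma --- shows that $\tau_{\co0t}(\A_t)$ is a suffix code for every $t$. For each $t$, Lemma~\ref{l:sufcode} (applied over $\Omega_{\ttau}$, via Proposition~\ref{p:recog}) then gives that $1$ is a right-quasi-recognizability radius for $\tau_{\co0t}$ over $\Omega_{\ttau}$. All hypotheses of Theorem~\ref{t:rkrad} are thus met with $R=1$, so $\Omega_{\ttau}$ is positively $\rk^2$-expansive; the degenerate case $\rk=1$ is subsumed, since Theorem~\ref{t:rkrad} already handles it through Remark~\ref{r:rk1} (where $\Omega_{\ttau}$ is finite, hence trivially positively $\rk^2$-expansive).
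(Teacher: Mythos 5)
Your skeleton is exactly the paper's: obtain $R=1$ from Lemma~\ref{l:sufcode} and feed it into Theorem~\ref{t:rkrad}. You also correctly spotted the point that the paper's one-line proof passes over in silence: Lemma~\ref{l:sufcode} must be applied to the \emph{compositions} $\tau_{\co0t}$, so one needs each $\tau_{\co0t}(\A_t)$ to be a suffix code, which is a composition statement not contained in the hypotheses. Your block-peeling composition lemma is correct exactly as you qualified it, i.e.\ when the outer substitution is injective on letters; it is the analogue of Lemmas~\ref{lemma:inj.monoid}, \ref{lemma:extension.suffcode} and~\ref{lemma:inj.suffcode}, which the paper proves for right-recoverable (hence injective) substitutions. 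One small imprecision: injectivity is \emph{not} ``preserved under composition'' for free (letter-injectivity of $\tau_{\co0t}$ and $\tau_t$ does not by itself give letter-injectivity of $\tau_{\co0{t+1}}$); what saves you is that in your simultaneous induction $\tau_{\co0t}$ is injective \emph{with suffix-code image}, hence its monoid extension is injective, and then composition with the injective $\tau_t$ works.

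The genuine gap is the ``without loss of generality'' by which you dispose of injectivity. Identifying two letters $a,b\in\A_{t+1}$ with $\tau_t(a)=\tau_t(b)$ forces you to rewrite $\tau_{t+1}$, replacing $b$ by $a$ in its images, and this does \emph{not} leave the suffix-code property untouched. Concretely, take $\tau_t(a)=\tau_t(b)=xy$, $\tau_t(c)=zz$, and $\tau_{t+1}(d)=b$, $\tau_{t+1}(e)=ca$: both $\tau_t(\A_{t+1})=\{xy,zz\}$ and $\tau_{t+1}(\A_{t+2})=\{b,ca\}$ are suffix codes, but after the identification the image set of $\tau_{t+1}$ becomes $\{a,ca\}$, and $a$ is a strict suffix of $ca$. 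The same example shows the failure is not an artifact of the reduction: without injectivity, level-wise suffix codes genuinely do not compose, since $(\tau_t\tau_{t+1})(\A_{t+2})=\{xy,zzxy\}$ is not a suffix code. So your induction can run neither on the modified sequence (it no longer satisfies the corollary's hypotheses) nor on the original one (injectivity is unavailable), and the argument as written does not close. To repair it you must either add injectivity of each $\tau_t$ as a hypothesis --- after which your proof is complete --- or establish a bounded right-quasi-recognizability radius for the $\tau_{\co0t}$ by a route that does not pass through the composed image sets being suffix codes (for instance in the spirit of Lemma~\ref{l:comprad}, which, however, also assumes injectivity).
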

\begin{proof}
This is direct from Theorem~\ref{t:rkrad} and Lemma~\ref{l:sufcode}.
\end{proof}

{
  The suffix code condition is actually a natural generalization of uniform substitutions, for which we can thus derive a similar statement.
}
\begin{definition}
A substitution $\tau$ is \textbf{uniform} if $\len\tau=||\tau||$, that is, all the images $\tau(a)$ have the same length.
A directive sequence $\ttau$ is \textbf{uniform} if for every $t$, $\tau_t$ is uniform.
In that case, notice that for every $t$, $\tau_{\co0t}$ is also uniform.
\end{definition}

\begin{corollary}\label{c:unif}
    For any directive sequence $\ttau$ which is everywhere-growing, quasi-recognizable on the full shift, uniform, with finite rank $\rk$,
     $\Omega_{\ttau}$ is positively $\rk^{2}$-expansive.
\end{corollary}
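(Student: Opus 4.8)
The plan is to deduce this from Corollary~\ref{c:sufcode}, whose hypotheses are nearly identical to those assumed here. Two gaps must be bridged: Corollary~\ref{c:sufcode} asks that each $\tau_t(\A_{t+1})$ be a suffix code rather than assuming uniformity, and it asks for quasi-recognizability over $\Omega_\ttau$ rather than over the full shift.

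First I would observe that uniformity forces the suffix code property. Indeed, if $\tau_t$ is uniform then every image $\tau_t(a)$, for $a\in\A_{t+1}$, has the same length $\len{\tau_t}=\norm{\tau_t}$. Since any strict suffix of a word is strictly shorter than that word, no image can be a strict suffix of another; hence $\tau_t(\A_{t+1})$ is a suffix code, for every $t\in\N$. Thus the uniformity assumption is a strictly stronger (and more concrete) condition than the suffix code hypothesis of Corollary~\ref{c:sufcode}.

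Next I would reconcile the two recognizability hypotheses. As noted in the remark preceding this statement, a right-quasi-recognizability radius over a set remains one over any subset; since $\Omega_\ttau\subset\A_0^\Z$, quasi-recognizability of each $\tau_{\co0t}$ over the full shift implies quasi-recognizability of each $\tau_{\co0t}$ over $\Omega_\ttau$. Hence $\ttau$ is quasi-recognizable in the precise sense demanded by Corollary~\ref{c:sufcode}.

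With the suffix code property and quasi-recognizability over $\Omega_\ttau$ established, and the everywhere-growing and finite-rank-$\rk$ assumptions carried over verbatim, Corollary~\ref{c:sufcode} applies directly and yields that $\Omega_\ttau$ is positively $\rk^2$-expansive. I expect no genuine obstacle in this argument; the only care required is the definitional bookkeeping, namely extracting the suffix code property from uniformity and confirming that right-quasi-recognizability descends from the full shift to the subsystem $\Omega_\ttau$.
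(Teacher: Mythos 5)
Your proposal is correct and matches the paper's own proof, which likewise deduces the statement from Corollary~\ref{c:sufcode} together with the observation that any set of words of uniform length is a suffix code. Your additional care in checking that quasi-recognizability over the full shift descends to $\Omega_\ttau$ is exactly the point covered by the remark following Theorem~\ref{t:rkrad}, so there is no divergence in approach.
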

\begin{proof}
This derives directly from Corollary~\ref{c:sufcode} and the fact that every set of words with uniform length is clearly a suffix code.
\end{proof}

\begin{example}
The substitutive shift corresponding to 
\[\tau:\left|\begin{array}{ccl}a&\mapsto&abc\\b&\mapsto&bbc\\c&\mapsto&aba\end{array}\right.,\]
is positively $9$-expansive. This upper bound does not seem tight, and we {suspect} that a more careful study (of how letters split up between the ones with common suffix and the others) would allow to prove that the hypotheses of Corollary~\ref{c:sufcode} imply $\Omega_\ttau$ are positively $(\ipart{\rk/2}+1)(\spart{\rk/2}+1)$-expansive when $\rk$ is odd.
\end{example}

\begin{example}
    On the contrary, the substitution
\[\tau:\left|\begin{array}{ccl}a&\mapsto&aa\\b&\mapsto&bb\end{array}\right.\]
is a counter-example to Corollary~\ref{c:unif} when dropping the hypothesis of quasi-recognizability over the full shift.
Indeed, $\Omega_\tau$ contains non-periodic asymptotically periodic configurations, for instace $\pinf a\uinf b$, which prevents any finite positive expansiveness, by Corollary~\ref{cor:periodic}.
\end{example}

\subsubsection{Right-marked, return, T\oe plitz substitutions}
In some cases, one can be a little more precise about the finite positive expansiveness degree.

\begin{definition}
We say that a substitution $\tau:\B\to\A^+$ is \textbf{q-right-recoverable} if it is injective and  $\sett{\tau(a)_{\co q{\len{\tau(a)}}}}{a\in\B}${ is a suffix code}, where $q\in\co1{\len\tau}$. 
When there exists $q\in\co1{\len\tau}$ such that $\tau$ is q-right-recoverable, we simply say that it is right-recoverable.
\end{definition}

\begin{remark}\label{rmk:1rrec}
Notice that any $q$-right-recoverable substitution, with $q \ge 1$, is $k$-right-recoverable for every $k \le q$.
\end{remark}

\begin{lemma}\label{lemma:inj.monoid}
    For every $q$-right-recoverable substitution $\tau : \B \rightarrow \A^{*}$, the generated monoid homomorphism $ \tau^* : \B^{*} \rightarrow \A^{*}$ is injective.
\end{lemma}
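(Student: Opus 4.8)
The plan is to prove directly that $\tau^*(u)=\tau^*(v)$ implies $u=v$, arguing by induction on the length of the common image and reading the words from right to left so as to exploit the suffix-code hypothesis.

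First I would fix $q\in\co1{\len\tau}$ witnessing $q$-right-recoverability and record, for each letter $a\in\B$, the factorization $\tau(a)=p_as_a$ in which $p_a\defeq\tau(a)_{\co0q}$ has length exactly $q$ and $s_a\defeq\tau(a)_{\co q{\len{\tau(a)}}}$ is the truncated suffix (nonempty, since $q<\len\tau$). By hypothesis the family $S\defeq\sett{s_a}{a\in\B}$ is a suffix code.

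For the induction step, suppose $u=a_1\cdots a_m$ and $v=b_1\cdots b_n$ satisfy $\tau^*(u)=\tau^*(v)=:W$. If $W$ is empty, then, as $\tau$ is non-erasing, $u$ and $v$ are both empty and there is nothing to prove; otherwise $m,n\ge1$. The last blocks $\tau(a_m)$ and $\tau(b_n)$ are suffixes of $W$, hence so are $s_{a_m}$ and $s_{b_n}$. Two suffixes of a common word are comparable, so one of $s_{a_m},s_{b_n}$ is a suffix of the other; the suffix-code property of $S$ then forces $s_{a_m}=s_{b_n}$. Because the discarded prefixes all have the same length $q$, this gives $\len{\tau(a_m)}=q+\len{s_{a_m}}=q+\len{s_{b_n}}=\len{\tau(b_n)}$, so the two last blocks occupy the same final positions of $W$ and coincide as words, $\tau(a_m)=\tau(b_n)$. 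Injectivity of $\tau$ (which is part of right-recoverability) yields $a_m=b_n$. Cancelling the common suffix $\tau(a_m)$ leaves $\tau^*(a_1\cdots a_{m-1})=\tau^*(b_1\cdots b_{n-1})$, a strictly shorter common image, and the induction hypothesis finishes the argument (in particular forcing $m=n$).

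This is essentially the classical proof that a suffix code is a code, the one twist being that here it is the truncated images $s_a$, not the images $\tau(a)$ themselves, that form the suffix code. The only place requiring care --- and the step I would regard as the crux rather than a real obstacle --- is the upgrade from $s_{a_m}=s_{b_n}$ to $\tau(a_m)=\tau(b_n)$: it works precisely because the prefix length $q$ is a single fixed constant, so equality of the truncated suffixes forces equality of the full block lengths, which is exactly what keeps the induction on $\len W$ well-founded. Beyond bookkeeping this length identity and treating the empty-word base case, I anticipate no genuine difficulty.
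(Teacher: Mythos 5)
Your proof is correct and follows essentially the same route as the paper's: compare the last letters, note their truncated suffixes are comparable suffixes of the common image, invoke the suffix-code property to get equality, deduce equal block lengths (hence equal blocks), apply injectivity of $\tau$, and recurse. You merely spell out more explicitly the empty-word base case and the well-foundedness of the induction, which the paper compresses into ``we conclude by a recursion argument.''
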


\begin{proof}
Let us consider two words $u = u_0 \ldots u_m,v = v_0 \ldots v_n \in \A^*$ such that $\tau^*(u) = \tau^*(v)$. As a consequence of this equality, one of the words $\tau(u_m)_{\co q{\len{\tau(u_m)}}}$ and $\tau(v_n)_{\co q{\len{\tau(v_n)}}}$ is suffix of the other. Since the set $\sett{\tau(a)_{\co q{\len{\tau(a)}}}}{a\in\B}${ is a suffix code}, this implies that $\tau(u_m)_{\co q{\len{\tau(u_m)}}} 
= \tau(v_n)_{\co q{\len{\tau(v_n)}}}$ and thus that $\tau(u_m)$ and $\tau(v_n)$ have the same length. Using $\tau^*(u) = \tau^*(v)$ again, we have that $\tau(u_m) = \tau(v_n)$. Since $\tau$ is injective, we have $u_m = v_n$. Finally, we have that 
$\tau^*$ has the same images on $u_0 \ldots u_{m-1}$ and $v_0 \ldots v_{n-1}$. We conclude by a recursion argument.
\end{proof}

\begin{definition}
We call \textbf{extension} of a suffix code $W$ any set $W'$
such that there exists a surjective function $\phi : W \rightarrow W'$ such that for all $w \in W$, $w$ is a suffix of $\phi(w)$. Since $W$ is a suffix code, such a function is also injective. 
\end{definition}

\begin{lemma}\label{lemma:extension.suffcode}
Every extension of a suffix code is a suffix code. 
\end{lemma}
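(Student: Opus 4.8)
The plan is to argue by contradiction, exploiting the elementary fact that any two suffixes of a fixed word are comparable under the suffix relation. Suppose $W'$ is \emph{not} a suffix code. Then there exist two distinct elements $w'_1,w'_2\in W'$ such that $w'_1$ is a strict suffix of $w'_2$. Using the defining surjection $\phi:W\to W'$, which is a bijection since $W$ is a suffix code, set $w_1\defeq\phi^{-1}(w'_1)$ and $w_2\defeq\phi^{-1}(w'_2)$; these are distinct elements of $W$, and by definition of extension $w_1$ is a suffix of $w'_1$ and $w_2$ is a suffix of $w'_2$.

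Next I would transport everything to the single word $w'_2$. Since $w_1$ is a suffix of $w'_1$ and $w'_1$ is a suffix of $w'_2$, transitivity of the suffix relation gives that $w_1$ is a suffix of $w'_2$; and $w_2$ is a suffix of $w'_2$ by construction. Thus both $w_1$ and $w_2$ are suffixes of the common word $w'_2$. The key observation is that the suffix of $w'_2$ of a prescribed length is unique, so comparing $\len{w_1}$ and $\len{w_2}$ yields, without loss of generality, that $w_1$ is a suffix of $w_2$.

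Finally I would invoke the suffix-code property of $W$: since $w_1,w_2\in W$ and $w_1$ is a suffix of $w_2$, $w_1$ cannot be a \emph{strict} suffix of $w_2$, so $w_1=w_2$. This contradicts $w_1\ne w_2$ (which holds because $w'_1\ne w'_2$ and $\phi$ is injective). Hence no such pair $w'_1,w'_2$ exists and $W'$ is a suffix code.

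The argument is essentially routine; the only point requiring a little care is the comparability step, that two suffixes of one word are automatically suffix-comparable — this is where the whole reduction works — together with keeping track of strictness so that the equality $w_1=w_2$ genuinely contradicts the distinctness coming from injectivity of $\phi$.
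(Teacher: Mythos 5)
Your proof is correct and follows essentially the same route as the paper's: both pull the two comparable words of $W'$ back to $W$, observe that the two preimages are suffixes of the common longer word and hence comparable, and then apply the suffix-code property of $W$ together with injectivity of $\phi$. The only difference is that you phrase it as a contradiction while the paper argues directly, which is an immaterial rephrasing.
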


\begin{proof}
Let us consider an extension $W'$ of a suffix code $W$, and 
set $\phi : W \rightarrow W'$ surjective such that for all $w$, $w$ is a suffix of $\phi(w)$. We have to prove that if two words $u',v'$ in $W'$ are such that $u'$ is a suffix of $v'$, then $u'$ and $v'$ are equal. Let us fix two such words $u',v'$. We denote by $u,v$ their respective preimages by $\phi$. As a consequence, 
one of the two words $u,v$ is suffix of the other. Therefore $u=v$. Since $\phi$ is also injective, $u'=v'$.
\end{proof}

\begin{lemma}\label{lemma:inj.suffcode}
    For every suffix code $W$ over alphabet $\B$, and every substitution $\tau : \B \rightarrow \A^{*}$ such that $\tau^*$ is injective, the set $\tau^*(W)$ is a suffix code.
\end{lemma}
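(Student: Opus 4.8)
The plan is to reduce the suffix-code property of $\tau^{*}(W)$ to that of $W$ by exploiting the injectivity of $\tau^{*}$. Concretely, suppose two elements $u',v'\in\tau^{*}(W)$ satisfy that $u'$ is a suffix of $v'$; writing $u'=\tau^{*}(u)$ and $v'=\tau^{*}(v)$ with $u,v\in W$, I want to conclude $u'=v'$ (this is exactly what is needed for $\tau^{*}(W)$ to be a suffix code). Since $\tau^{*}$ is injective, it suffices to show that $u$ is a suffix of $v$ as words over $\B$: indeed, $W$ being a suffix code would then force $u=v$, hence $\tau^{*}(u)=\tau^{*}(v)$.

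Then the main step is to prove that $\tau^{*}(u)$ being a suffix of $\tau^{*}(v)$ forces $u$ to be a suffix of $v$, which I would establish by induction on $\len u$, peeling off the last letters. Write $u=u_{1}\cdots u_{m}$ and $v=v_{1}\cdots v_{n}$. Both $\tau(u_{m})$ and $\tau(v_{n})$ are suffixes of $\tau^{*}(v)$ (the former because it is a suffix of $\tau^{*}(u)$, itself a suffix of $\tau^{*}(v)$; the latter by definition of the factorisation of $v'$), so one of the two is a suffix of the other. The goal at this stage is to upgrade this to the equality $\tau(u_{m})=\tau(v_{n})$; once that is known, injectivity of $\tau$ on letters (a consequence of $\tau^{*}$ being injective) gives $u_{m}=v_{n}$, and after cancelling this common final block one is reduced to showing that $\tau^{*}(u_{1}\cdots u_{m-1})$ is a suffix of $\tau^{*}(v_{1}\cdots v_{n-1})$, which is the induction hypothesis on a strictly shorter word. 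The base case $\len u=0$ is immediate.

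The hard part will be precisely this alignment step: ruling out that the final block $\tau(v_{n})$ of $v'$ straddles the cut between the prefix $p$ (where $v'=p\,u'$) and the suffix $u'$, i.e.\ excluding the case where $\tau(u_{m})$ is a \emph{proper} suffix of $\tau(v_{n})$ or conversely. This is where I expect to lean on the full strength of the injectivity of $\tau^{*}$: the unique-factorisation property should force the factorisation of the suffix $u'$ coming from $u$ to be a tail of the unique factorisation of $v'$ coming from $v$, so that the block boundaries synchronise at the cut point and the two last blocks are genuinely equal. If a partial block survives this matching, I would try to feed it into Lemma~\ref{lemma:extension.suffcode}, viewing the shifted boundary as producing an extension of an already-known suffix code and deriving a contradiction. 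Verifying that unique decodability really does synchronise the two factorisations at the boundary is the delicate combinatorial core of the proof, and is the point where any additional structural hypothesis on $\tau$ (for instance, its images, or suitable right-truncations of them, forming a suffix code) would be the natural ingredient to make the synchronisation go through.
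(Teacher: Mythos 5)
Your reduction is exactly the paper's: its entire proof is the single sentence asserting that if $\tau^*(u)$ is a suffix of $\tau^*(v)$ then $u$ is a suffix of $v$, which is precisely the implication you try to establish by induction. But the gap you flag at the alignment step is genuine, and in fact it cannot be closed, because that implication --- and the lemma as stated --- is false. Take $\B=\A=\{a,b\}$, $\tau(a)=ab$, $\tau(b)=b$. The image set $\{ab,b\}$ is a prefix code, hence uniquely decodable, so $\tau^*$ is injective; $W=\{a,b\}$ is a suffix code; yet $\tau^*(W)=\{ab,b\}$ is not a suffix code, since $b$ is a strict suffix of $ab$. This breaks your induction exactly where you predicted: $\tau(u_m)=b$ is a strict suffix of $\tau(v_n)=ab$, and no appeal to unique decodability can synchronise the block boundaries at the cut --- unique decodability is a left-to-right notion and puts no constraint on right-aligned overlaps of codewords. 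Indeed, with $W=\B$ the lemma would assert that the image code of every injective substitution with injective $\tau^*$ is a suffix code, i.e.\ essentially that every uniquely decodable code is a suffix code, which the example refutes. So the paper's one-line proof asserts a false statement, and your hesitation was well founded.

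Your closing guess also identifies the correct repair. The lemma is only ever invoked (in Lemma~\ref{l:sufcompo}) for a right-recoverable $\tau$, i.e.\ one that is injective on letters and whose right-truncations $T\defeq\sett{\tau(a)_{\co{q}{\len{\tau(a)}}}}{a\in\B}$ form a suffix code. Under that hypothesis your alignment step goes through: if, say, $\tau(u_m)$ is a suffix of $\tau(v_n)$, then the truncations of $\tau(u_m)$ and of $\tau(v_n)$ are both suffixes of $\tau(v_n)$, so one is a suffix of the other, so by the suffix-code property of $T$ they are equal; equality of truncations forces $\len{\tau(u_m)}=\len{\tau(v_n)}$, hence $\tau(u_m)=\tau(v_n)$, hence $u_m=v_n$ by injectivity on letters, and your induction proceeds by cancelling the last block. (The same argument works under the weaker assumption that the full images $\tau(\B)$ form a suffix code, a condition which right-recoverability implies.) So your proposal, completed with the extra hypothesis you anticipated needing, proves the statement the paper actually uses; as written, both the lemma's hypothesis and the paper's proof are too weak, and the counterexample above shows this is not a presentational issue but a genuine error in the stated generality.
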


\begin{proof}
    It is sufficient to see that for every words $u,v$ such that $u$ is suffix of $v$, $\left(\tau^*\right)^{-1}(u)$ is a suffix of $\left(\tau^*\right)^{-1}(v)$.
\end{proof}


\begin{lemma}\label{l:sufcompo}
  If $\tau:\B\to\A^+$ is right-recoverable and $\tau':\mathcal{C}\to\B^+$ is $q$-right-recoverable, then $\tau\tau'$ is $q\len\tau$-right-recoverable.
\end{lemma}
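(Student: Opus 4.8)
The plan is to check the three defining requirements of $q\len\tau$-right-recoverability for the composite $\tau\tau'=\tau^*\circ\tau'$: injectivity, the range condition $q\len\tau\in\co1{\len{\tau\tau'}}$, and the suffix-code condition on the truncated images. The first two are quick; the third carries the weight and will be routed through Lemmas~\ref{lemma:inj.suffcode} and~\ref{lemma:extension.suffcode}.

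For injectivity, note that since $\tau$ is right-recoverable, Lemma~\ref{lemma:inj.monoid} makes $\tau^*$ injective; combined with injectivity of $\tau'$, the composite $\tau\tau'$ is injective on $\mathcal C$. For the range condition, write $\tau'(c)=b_0\cdots b_{m-1}$; then $\len{(\tau\tau')(c)}=\sum_{i<m}\len{\tau(b_i)}\ge m\len\tau\ge\len{\tau'}\len\tau$, so that $\len{\tau\tau'}\ge\len{\tau'}\len\tau$. Since $q<\len{\tau'}$ we get $q\len\tau<\len{\tau'}\len\tau\le\len{\tau\tau'}$, and $q\len\tau\ge1$ is clear because $q\ge1$ and $\len\tau\ge2$.

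The heart of the argument is the suffix-code condition on $\sett{(\tau\tau')(c)_{\co{q\len\tau}{\len{(\tau\tau')(c)}}}}{c\in\mathcal C}$, which I would peel one level at a time. Because $\tau'$ is $q$-right-recoverable, $W\defeq\sett{\tau'(c)_{\co q{\len{\tau'(c)}}}}{c\in\mathcal C}$ is a suffix code over $\B$; since $\tau^*$ is injective, Lemma~\ref{lemma:inj.suffcode} makes $\tau^*(W)=\sett{\tau^*\big(\tau'(c)_{\co q{\len{\tau'(c)}}}\big)}{c\in\mathcal C}$ a suffix code over $\A$. Writing $\tau'(c)=b_0\cdots b_{m-1}$, the word $\tau^*\big(\tau'(c)_{\co q{\len{\tau'(c)}}}\big)=\tau(b_q)\cdots\tau(b_{m-1})$ is the suffix of $(\tau\tau')(c)$ obtained by deleting its first $q$ blocks, hence its first $\sum_{i<q}\len{\tau(b_i)}$ letters; as $\sum_{i<q}\len{\tau(b_i)}\ge q\len\tau$, it is a fortiori a suffix of $(\tau\tau')(c)_{\co{q\len\tau}{\len{(\tau\tau')(c)}}}$. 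Thus every truncated composed image is a left-extension of a word of $\tau^*(W)$, and I would finish with Lemma~\ref{lemma:extension.suffcode}, concluding that the set of truncated composed images is an extension of the suffix code $\tau^*(W)$ and therefore a suffix code.

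The step I expect to be the main obstacle is this last one: to apply Lemma~\ref{lemma:extension.suffcode} I must produce a genuine extension, i.e. a well-defined surjection from $\tau^*(W)$ onto the truncated composed images sending each $w$ to a word admitting $w$ as a suffix. Well-definedness is delicate, because two letters $c,c'$ can share the tail $\tau'(c)_{\co q{\cdot}}=\tau'(c')_{\co q{\cdot}}$ (hence the same image under $\tau^*$) while having different leading blocks $\tau(b_0)\cdots\tau(b_{q-1})$; the residual prefix of length $\sum_{i<q}\len{\tau(b_i)}-q\len\tau$ lying above the cut at $q\len\tau$ need not then coincide. This residual prefix vanishes precisely when the cut $q\len\tau$ lands on the $q$-th block boundary, which is automatic for uniform $\tau$ (there $\sum_{i<q}\len{\tau(b_i)}=q\len\tau$ and the truncated composed image is literally $\tau(b_q)\cdots\tau(b_{m-1})$, so the condition is immediate). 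For non-uniform $\tau$ I would need to control this residual prefix using the right-recoverability of $\tau$ to align block boundaries read from the right, and to justify why $q\len\tau$ (rather than a larger shift such as $q\norm\tau$) is the correct truncation; this alignment is where I expect the real work to lie.
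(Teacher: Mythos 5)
You follow the paper's proof almost line for line: injectivity from Lemma~\ref{lemma:inj.monoid} together with injectivity of $\tau'$, the range condition from $q<\len{\tau'}$ and $\len\tau\len{\tau'}\le\len{\tau\tau'}$, and the suffix-code condition by pushing $W\defeq\sett{\tau'(c)_{\co q{\len{\tau'(c)}}}}{c\in\mathcal C}$ through Lemmas~\ref{lemma:inj.suffcode} and~\ref{lemma:extension.suffcode}. The one difference is that the paper simply asserts that $W'$ is an extension of $\tau^*(W)$, while you stop to ask whether the required surjection is well defined --- and you are right to stop. The obstruction you isolate (two letters of $\mathcal C$ sharing the same $q$-tail while their first $q$ blocks have different lengths, leaving different residual prefixes above the cut at $q\len\tau$) is not just a gap in the paper's one-line extension step: it refutes the statement. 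Take $\A=\{x,y,z\}$, $\B=\{a,b\}$, $\mathcal C=\{c,c'\}$, with $\tau(a)=xy$, $\tau(b)=zyx$, $\tau'(c)=ab$, $\tau'(c')=bb$. Then $\tau$ is $1$-right-recoverable (its tails $\{y,yx\}$ form a suffix code), hence right-recoverable, and $\tau'$ is $1$-right-recoverable (its set of tails is the singleton $\{b\}$); but $\tau\tau'(c)=xyzyx$ and $\tau\tau'(c')=zyxzyx$, so cutting at $q\len\tau=2$ yields $\{zyx,\,xzyx\}$, which is not a suffix code since $zyx$ is a strict suffix of $xzyx$. In particular, your hope of closing the gap by ``using the right-recoverability of $\tau$ to align block boundaries'' cannot succeed: the $\tau$ above is right-recoverable, yet the conclusion fails. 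So both your proposal and the paper's proof stall at the same step, and under the stated hypotheses no completion exists.

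What your analysis does correctly identify is the missing hypothesis. If the tail map $c\mapsto\tau'(c)_{\co q{\len{\tau'(c)}}}$ is injective, then each element of $\tau^*(W)$ comes from a unique letter $c$, your map $\phi$ is a well-defined surjection, each $\tau^*(w)$ is a suffix of its image because the first $q$ blocks have total length at least $q\len\tau$, and the extension argument closes; moreover this distinct-tails property is inherited by $\tau\tau'$ at the cut $q\len\tau$, so the corrected lemma can be iterated as in Corollary~\ref{c:recover}. (Alternatively, uniformity of $\tau$ makes the cut land on the $q$-th block boundary and gives $W'=\tau^*(W)$ outright, as you note.) The paper's downstream applications survive such a repair, since the substitutions actually fed into this lemma --- right-marked ones (Corollary~\ref{c:marked}) and injective left-proper or return ones (Remark~\ref{r:proper}, Corollary~\ref{c:return}) --- all have pairwise distinct tails. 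But as stated, Lemma~\ref{l:sufcompo} is false, and the ``real work'' you flagged is genuinely impossible rather than merely delicate.
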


\begin{proof}
    In order to prove that $\tau \tau'$ is $q\len\tau$-right-recoverable, 
we need to prove that $q\len\tau \in\co1{\len{\tau\tau'}}$, that $W':= \sett{\tau\tau'(c)_{\co{q\len\tau}{\len{\tau\tau'(c)}}}}{c\in\mathcal{C}}${ is a suffix code}, and that $\tau \tau'$ is injective. 
The first point follows from the fact that $\tau'$ is $q$-right-recoverable and thus $q\in\co1{\len{\tau'}}$, and that $\len \tau\len{\tau'} \le \len{\tau\tau'}$.
The third point follows from the fact that $\tau^*$ is injective (Lemma~\ref{lemma:inj.monoid}) and that $\tau'$ is injective.
We are left to prove the second point.
Since $\tau'$ is $q$-right-recoverable, $W\defeq{\sett{\tau'(c)_{\co q{\len{\tau'(c)}}}}{c\in\mathcal{C}}}$ is a suffix code.
  By Lemma~\ref{lemma:inj.suffcode}, $\tau^*(W)$ is a suffix code, and by Lemma~\ref{lemma:extension.suffcode}, $W'$ is a suffix code, as an extension of $\tau^*(W)$.
\end{proof}

\begin{lemma}\label{l:k0bounded}
  Let us fix an integer $q \ge 1$ and $\tau:\B\to\A^+$ be a quasi-recognizable $q$-right-recoverable substitution, and $x\in X\defeq\tau(\B^{\Z}).$
  Then
  \[\card{\pred[q]X{x_{ \N}}}\le\card{\B}.\]
\end{lemma}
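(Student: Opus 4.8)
The plan is to count length-$q$ predecessors by showing that, once $x_\N$ is fixed, the content of the length-$q$ window immediately to the left of position $0$ is governed by a single letter of $\B$. First I would reformulate the statement: a word $w\in\pred[q]X{x_\N}$ amounts to the existence of $p\in X$ with $p_{\co 0\infty}=x_\N$ and $p_{\co{-q}0}=w$. By quasi-recognizability each such $p$ has a unique standard desubstitution scheme $\kk$ and desubstitution $y$, with block $i$ equal to $\tau(y_i)$ on $\co{k_i}{k_{i+1}}$ and $k_0\le0<k_1$. Since $q<\len\tau\le k_{i+1}-k_i$, consecutive cuts are more than $q$ apart, so the window $\co{-q}0$ meets at most one cut; hence it is covered by the block straddling position $0$ together with at most one block to its left.

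Next I would pin down the right-hand parse. Using the quasi-recognizability radius (Proposition~\ref{p:recog} and Remark~\ref{r:rmin}), for any two realizations the cuts lying far enough to the right coincide and depend only on $x_\N$. Reading blocks leftward from such a fixed cut, every block of $p$ with nonnegative starting position has its chopped image $\tau(y_i)_{\co q{\len{\tau(y_i)}}}$ lying entirely inside $x_\N$ (its leftmost position is $k_i+q>0$); the suffix-code hypothesis of $q$-right-recoverability, used exactly as in the proof of Lemma~\ref{lemma:inj.monoid}, then identifies $y_i$ and the corresponding cut uniquely. Propagating this leftward shows that all positive cuts $k_1<k_2<\cdots$ and all letters $y_1,y_2,\dots$ are determined by $x_\N$ alone; in particular the first positive cut $k_1$ is a fixed integer, independent of the predecessor.

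With the right parse fixed, the only remaining freedom is the phase at the junction. The straddling block $\tau(y_0)$ ends at the fixed position $k_1$ and its length-$k_1$ suffix must equal the fixed word $x_{\co 0{k_1}}$; the block to its left contributes its letter $y_{-1}$ only when $k_0>-q$. In either regime $w$ is a function of one letter of $\B$: if $k_0\le-q$ then $w=\tau(y_0)_{\co{-q-k_0}{-k_0}}$ is a function of $y_0$; if $k_0>-q$ then the part of $w$ inside block $0$ is a function of $y_0$ while the remaining prefix is a suffix of $\tau(y_{-1})$, a function of $y_{-1}$. I would then argue that quasi-recognizability forces a single admissible phase $k_0$, so that $w$ is a function of exactly one free letter ranging over $\B$, whence $\card{\pred[q]X{x_\N}}\le\card\B$.

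The hard part will be precisely this junction analysis: showing that the window content collapses to a function of a single letter. Concretely, one must rule out that distinct predecessors realize distinct phases $k_0$, and it is here that quasi-recognizability is essential, since the periodicity ambiguities it forbids are exactly what could otherwise produce extra predecessors with the same straddling letter. One must also treat uniformly the two situations where the straddling block overhangs the window or stops inside it, and check that the chopped-image identification still pins $y_0$ (rather than leaving it free) when the straddling block extends to the left of $-q$. I expect verifying the uniqueness of the phase to require the most care.
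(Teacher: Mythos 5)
Your overall architecture matches the paper's proof: fix the parse to the right of the junction (the paper does this by noting that recoverability implies the hypothesis of Lemma~\ref{l:sufcode}, so $1$ is a right-quasi-recognizability radius, whence the first positive cut $k_1$ and everything to its right is common to all predecessors), then analyse the length-$q$ window according to whether the straddling cut $k_0$ lies at or left of $-q$ or inside the window. Your treatment of the first regime ($k_0\le-q$, so $w$ is the length-$q$ subword of $\tau(y_0)$ at distance $k_1$ from its right end, hence a function of $y_0$ alone) is exactly the paper's ``second case''. The gap is the other regime, and the mechanism you propose would not close it. First, quasi-recognizability cannot ``force a single admissible phase'': it asserts uniqueness of the standard desubstitution scheme of each \emph{individual} configuration, but relates in no way the schemes of two \emph{different} left-extensions of the same $x_{\N}$; moreover, in the regime $k_0\le-q$ nothing prevents distinct predecessors from having distinct phases (straddling letters of different image-lengths, all blocks ending at the common $k_1$), so uniqueness of the phase is not even the right statement to aim for. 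Second, and more importantly, a unique phase would not suffice: when the cut falls inside the window, $w$ is the concatenation of a suffix of $\tau(y_{-1})$ with a prefix of $\tau(y_0)$, i.e.\ a function of \emph{two} letters, so pinning $k_0$ still allows up to $\card\B^2$ predecessors. What must be proven is that the \emph{letter} $y_0$, not just the phase, is common to all predecessors.

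The missing idea, which is the heart of the paper's ``first case'', is a cross-predecessor comparison using the recoverability (suffix-code) hypothesis rather than quasi-recognizability: if \emph{some} predecessor configuration $x'$ has $k'_0>-q$, then its chopped straddling image $\tau(y'_0)_{\co q{\len{\tau(y'_0)}}}=x'_{\co{k'_0+q}{k_1}}$ lies entirely inside the common right half $x_{\N}$; for any other predecessor $x''$, the two chopped straddling images are then both suffixes of the same right-infinite word ending at the common cut $k_1$, so one is a suffix of the other, and the suffix-code hypothesis forces them to be equal --- hence equal lengths, equal phases $k''_0=k'_0>-q$, and (reading recoverability as making the chopped images of distinct letters distinct, which is how the paper uses it to get $\tau(y''_0)=\tau(y'_0)$) equal letters $y''_0=y'_0$. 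Only after this collapse does the window content become a function of the single free letter $y_{-1}$; the paper packages this as ``$0$ is a right-quasi-recognizability radius over $\{x\}\times X$'' and concludes via Lemma~\ref{l:rkrad}. Note finally that the case split must be \emph{global} over the set of predecessors (either every predecessor has $k'_0\le-q$, or one --- hence, by the above, all --- has $k'_0>-q$): your per-predecessor dichotomy does not by itself yield a single well-defined injective map from $\pred[q]X{x_{\N}}$ into $\B$.
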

\begin{proof}
  Right-recoverability implies the hypothesis of Lemma~\ref{l:sufcode}, so that $1$ is a right-quasi-recognizability radius.
  Let $\kk$ be the unique standard desubstitution scheme for $x$, and $y$ the unique desubstitution corresponding to this desubstitution scheme (it is unique because $\tau$ is injective). We distinguish two cases.
  \begin{enumerate}
  \item \textbf{First case.} Assume that there exists $x'\in X$ such that $x'_{{ \N}}=x_{{ \N}}$ having a standard desubstitution scheme $\kk'$ such that $k'_0>-q$.
    {Because $1$ is a right-quasi-recognizability radius, there are $j,j'\in \{0,1\}$ such that $\kk_{\co j\infty} = \kk'_{\co j\infty}$.
      Since $k_0,k'_0\le0<k_1,k'_1$, we can deduce that $k'_1 = k_1$.}
    If we denote by $y'$ the desubstitution corresponding to $\kk'$, then $\tau(y'_0)=x'_{\co{k'_0}{k'_1}} = x'_{\co{k'_0}{k_1}}$ and $\tau(y_0)=x_{\co{k_0}{k_1}}$.
    Hence $\tau(y'_0)_{\co q{\len{\tau(y'_0)}}}=x_{\co{k'_0+q}{k_1}}$ and $\tau(y_0)_{\co q{\len{\tau(y'_0)}}}=x_{\co{k_0+q}{k_1}}$, so that one is suffix of the other.
    By recoverability, $\tau(y'_0)=\tau(y_0)$.
    Since $\tau$ is injective, $y'_0 = y_0$.
    Hence $0$ is actually also a radius over $\{x\}\times X$.
    We conclude by using Lemma~\ref{l:rkrad}.
  
  \item \textbf{Second case.} On the other hand, suppose that for every $x'\in X$ such that $x'_{{ \N}}=x_{{ \N}}$ and standard desubstitution scheme $\kk'$, one has $k'_0\le-q$.
  Let us consider any map $\phi:\pred[q]X{x_{{\N}}}\to\B$ such that for any $w\in\pred[q]X{x_{{\N}}}$, $\phi(w)=y'_0$ for some desubstitution $y'$ of a configuration $x'$ such that $x'_{\co{-q}\infty}=wx_{{ \N}}$.
  For every configuration $x'$ such that $x'_{\co{-q}\infty}=wx_{{ \N}}$ and every standard desubstitution scheme $\kk'$ and desubstitution $y'$, one has $k'_0\le-q$, by assumption.
  Since $\tau(y'_0)=x'_{\co{k'_0}{k_1}}$, one gets $w=\tau(\phi(w))_{\len{\tau(\phi(w))}-k_1+\co{-q}0}$, which implies that $\phi$ is injective.
  This directly gives the inequality.
 \popQED\end{enumerate}
\end{proof}

\begin{corollary}\label{c:recover}
  Let $\ttau$ be a quasi-recognizable everywhere-growing directive sequence with finite rank $\rk$ such that for every $t'\in { \N}$ and every sufficiently large $t>t'$, ${\tau_{\co{t'}t}}$ is 
  right-recoverable.
  Then $\Omega_\ttau$ is positively $\rk$-expansive.
\end{corollary}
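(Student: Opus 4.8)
The plan is to bound every predecessor set of $\Omega_\ttau$ and then invoke Lemma~\ref{l:preds}. Fix $x\in\Omega_\ttau$; it suffices to show that $\card{\pred[\ell]{\Omega_\ttau}{x_{\N}}}\le\rk$ for every $\ell\ge1$. Since truncating to the last $q$ letters maps $\pred[q+1]{\Omega_\ttau}{x_{\N}}$ onto $\pred[q]{\Omega_\ttau}{x_{\N}}$, the quantity $\card{\pred[q]{\Omega_\ttau}{x_{\N}}}$ is nondecreasing in $q$, so it is enough to establish the bound $\card{\pred[q]{\Omega_\ttau}{x_{\N}}}\le\rk$ for arbitrarily large $q$.

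The heart of the argument is to realize $\tau_{\co0t}$ as a right-recoverable substitution with large recoverability index but small alphabet. Given a target $Q$, first use that $\ttau$ is everywhere-growing together with the hypothesis (applied with outer index $0$) to fix $t'$ so large that $\tau_{\co0{t'}}$ is right-recoverable and $\len{\tau_{\co0{t'}}}\ge Q$. Because $\ttau$ has finite rank $\rk$, there are infinitely many $t$ with $\card{\A_t}=\rk$; among them pick one with $t>t'$ large enough that $\tau_{\co{t'}t}$ is right-recoverable. By Remark~\ref{rmk:1rrec} the latter is $1$-right-recoverable, so Lemma~\ref{l:sufcompo} applied to $\tau_{\co0{t'}}$ and $\tau_{\co{t'}t}$ shows that $\tau_{\co0t}=\tau_{\co0{t'}}\tau_{\co{t'}t}$ is $\len{\tau_{\co0{t'}}}$-right-recoverable, with $\len{\tau_{\co0{t'}}}\ge Q$. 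The crucial point is that this boosting of the recoverability index is achieved without enlarging the alphabet, which remains $\A_t$ of size $\rk$.

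It remains to bound predecessors at this large length. Since $\ttau$ is quasi-recognizable, $\tau_{\co0t}$ is quasi-recognizable over $\Omega_\ttau$, and $\Omega_\ttau$ is exactly the set of configurations admitting a $\tau_{\co0t}$-desubstitution in $\Omega_{\sigma^t\ttau}$. Applying Lemma~\ref{l:k0bounded} to the quasi-recognizable, $\len{\tau_{\co0{t'}}}$-right-recoverable substitution $\tau_{\co0t}$ (read relative to $\Omega_\ttau$, i.e. taking the ambient shift $Y=\Omega_{\sigma^t\ttau}$) yields $\card{\pred[q]{\Omega_\ttau}{x_{\N}}}\le\card{\A_t}=\rk$ for $q=\len{\tau_{\co0{t'}}}\ge Q$. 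As $Q$ was arbitrary, the monotonicity noted above gives $\card{\pred[\ell]{\Omega_\ttau}{x_{\N}}}\le\rk$ for every $\ell$ and every $x$, so $\Omega_\ttau$ is positively $\rk$-expansive by Lemma~\ref{l:preds}. The degenerate case $\rk=1$ is instead handled by Remark~\ref{r:rk1}, exactly as in the proof of Theorem~\ref{t:rkrad}.

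I expect the main obstacle to be obtaining the sharp constant $\rk$ rather than a power of it: feeding right-recoverability into Theorem~\ref{t:rkrad} through the radius $R=1$ would only deliver $\rk^2$. The improvement hinges on Lemma~\ref{l:k0bounded}, whose bound is the bare alphabet size $\card{\B}$, but which only controls predecessors of length equal to the recoverability index; this is precisely why Lemma~\ref{l:sufcompo} must be used to drive that index to infinity while the alphabet $\A_t$ stays of size $\rk$. A secondary point requiring care is that Lemma~\ref{l:k0bounded} must be applied relative to $\Omega_\ttau$ (with $Y=\Omega_{\sigma^t\ttau}$), so that the quasi-recognizability actually assumed --- over $\Omega_\ttau$, not the full shift --- suffices; the substitution $a\mapsto aa$, $b\mapsto bb$ shows this hypothesis cannot be omitted.
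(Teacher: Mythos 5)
Your proof is correct and follows essentially the same route as the paper's: compose a long right-recoverable prefix $\tau_{\co0{t'}}$ with a $1$-right-recoverable $\tau_{\co{t'}t}$ via Lemma~\ref{l:sufcompo}, then bound the length-$q$ predecessor sets by $\card{\A_t}=\rk$ via Lemma~\ref{l:k0bounded}, and conclude by monotonicity/compactness and Lemma~\ref{l:preds}. You are in fact slightly more careful than the paper on two points it leaves implicit: that $\tau_{\co0{t'}}$ itself must be right-recoverable (obtained from the hypothesis with outer index $0$), and that Lemma~\ref{l:k0bounded} must be read relative to $\Omega_\ttau$ with ambient shift $Y=\Omega_{\sigma^t\ttau}$ rather than the full shift.
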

Because of injectivity, such quasi-recognizable directive sequences will actually be eventually recognizable in the sense of \cite{BST}.
\begin{proof}
  By the everywhere-growing property, for every $m \ge 1$, there exists $t'\in { \N}$ such that $\len{\tau_{\co0{t'}}}\ge m$. 
  By assumption, there exists $t\ge t'$ such that $\A_{t+1}\le\rk$, and $\tau_{\co{t'}t}$ is $1$-right-recoverable.
  By Lemma~\ref{l:sufcompo}, this implies that $\tau_{\co0t}=\tau_{\co0{t'}}\tau_{\co{t'}t}$ is $\len{\tau_{\co0{t'}}}$-right-recoverable, hence $m$-right-recoverable.
  By Lemma~\ref{l:k0bounded}, we get that for every $x\in\Omega_\ttau$, $\card{\pred[m]X{x_{ \N}}}\le\card{\A_{t}}\le\rk$.
  Since this is true for every $m$, we conclude by compactness that $\card{\pred X{x_\N}}\le\rk$.
\end{proof}

\begin{definition}
We will say that a substitution $\tau$ is \textbf{right-marked} 
when $a \mapsto \tau(a)_{\len{\tau(a)}-1}$ is one-to-one, and that a directive sequence $\ttau$ is \textbf{right-marked} when for every $t \in\N$, $\tau_t$ is right-marked.
In that case, notice that for every $t \in\N$, $\tau_{\co0t}$ is also right-marked. 
\end{definition}

\begin{remark}\label{r:marked}
It is clear that if a substitution is right-marked and expanding, then it is $(\len{\tau}-1)$-right-recoverable. 
Moreover, the composition of two right-marked substitutions is clearly still right-marked.
By telescoping, one can then see that any quasi-recognizable everywhere-growing right-marked directive sequence satisfies the hypothesis of Corollary~\ref{c:recover}.
\end{remark}

\begin{remark}\label{r:markrk}
When a directive sequence $\ttau$ is right-marked, $t \mapsto \card{\A_t}$ is non-increasing, and thus eventually reaches the rank of $\ttau$.
\end{remark}

\begin{corollary}\label{c:marked}
    For every everywhere-growing quasi-recognizable right-marked directive sequence $\ttau$ with finite rank $\rk$ 
    , $\Omega_{\ttau}$ is positively $\rk$-expansive, and not positively $(\rk-1)$-expansive.
\end{corollary}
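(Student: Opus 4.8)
My plan is to prove the two assertions separately. The positive $\rk$-expansiveness is the easy half: by Remark~\ref{r:marked}, every everywhere-growing quasi-recognizable right-marked directive sequence meets the hypotheses of Corollary~\ref{c:recover}, which immediately yields that $\Omega_\ttau$ is positively $\rk$-expansive. (When $\rk=1$, Remark~\ref{r:rk1} already shows that $\Omega_\ttau$ is a single periodic orbit, hence finite and positively expansive.) The whole content is therefore the sharpness statement, that $\Omega_\ttau$ is \emph{not} positively $(\rk-1)$-expansive. By Lemma~\ref{l:preds} it suffices to exhibit one right-infinite word $z'=x_{\N}$, with $x\in\Omega_\ttau$, whose predecessor set satisfies $\card{\pred{\Omega_\ttau}{z'}}\ge\rk$; equivalently, by Remark~\ref{rem:asy}, a point $x$ with $\card{\Asym(x)}\ge\rk$, i.e.\ $\rk$ pairwise distinct, pairwise right-asymptotic configurations. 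For $\rk=1$ there is nothing to prove (no system is positively $0$-expansive, since $x\in W_\eps(x)$), so I assume $\rk\ge2$.

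For the setup I would invoke Remark~\ref{r:markrk}: $t\mapsto\card{\A_t}$ is non-increasing and eventually equal to $\rk$, so I fix $T$ with $\card{\A_t}=\rk$ for every $t\ge T$. For such $t$ the right-marking map $\rho_t\colon\A_{t+1}\to\A_t$, $a\mapsto\tau_t(a)_{\len{\tau_t(a)}-1}$, is injective between finite sets of equal cardinality, hence a bijection; by composition each last-letter map $\lambda_t\colon\A_t\to\A_0$ attached to $\tau_{\co0t}$ is injective. The inverse limit $\varprojlim_{t\ge T}(\A_t,\rho_t)$ thus has exactly $\rk$ elements, and distinct coherent sequences $\boldsymbol b=(b_t)_{t\ge T}$ (with $\rho_t(b_{t+1})=b_t$) differ at \emph{every} level. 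Each such $\boldsymbol b$ determines a left-infinite word $L(\boldsymbol b)\in\A_0^{\Z_-}$ by requiring $L(\boldsymbol b)$ to end at position $-1$ with the block $\tau_{\co0t}(b_t)$ for every $t$; coherence nests these blocks consistently, and the everywhere-growing hypothesis ($\len{\tau_{\co0t}}\to\infty$) makes every negative coordinate eventually determined. The $\rk$ words $L(\boldsymbol b)$ are pairwise distinct already at position $-1$: that coordinate equals $\lambda_t(b_t)$, which is injective in $b_t$, and the $b_t$ differ across the $\rk$ coherent sequences.

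It then remains to glue these $\rk$ left tails to a common right part, i.e.\ to find a single $z'$ with $L(\boldsymbol b)\,z'\in\Omega_\ttau$ for all $\rk$ sequences $\boldsymbol b$; the resulting configurations share $z'$, are distinct at position $-1$, and witness $\card{\pred{\Omega_\ttau}{z'}}\ge\rk$. I expect this gluing to be the main obstacle. Since each $L(\boldsymbol b)$ ends exactly at a level-$t$ block boundary at the origin for every $t$, quasi-recognizability forces any valid right extension to start a level-$t$ block at the origin too, so compatibility reduces, level by level, to the legality of the single junction pair $b_t e_t$ (last letter of the left block, first letter $e_t=\tau_t(e_{t+1})_0$ of the right block), with $e_t$ coherent under the first-letter maps. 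Legality of such a pair propagates downward through substitution, so it is enough to realize these junctions cofinally in $t$; the delicate point is to secure, at arbitrarily high levels, a block-starting letter that can be preceded by \emph{every} letter of $\A_t$. I plan to obtain this from the upward recursion $Q_{t+1}(g')\Rightarrow Q_t(\tau_t(g')_0)$, where $Q_t(e)$ abbreviates ``$e$ is preceded by every letter in $\Omega_{\sigma^t\ttau}$'' — the recursion holds because boundary predecessors of a block-starting letter are exactly the $\rho_t$-images of the upper-level predecessors, and $\rho_t$ is bijective — combined with a compactness argument guaranteeing that the inverse limit of the (nonempty, finite) sets of admissible coherent junctions is nonempty. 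Should the seed $Q_t$ fail to hold cofinally for a single letter, the same compactness argument is to be run on the full finitely branching tree of one-sided left-extensions of $z'$, whose number of ends is exactly $\card{\pred{\Omega_\ttau}{z'}}$, showing it can be made to reach $\rk$. Once a common $z'$ is secured the proof concludes as above, and together with the positive $\rk$-expansiveness already established this gives that $\Omega_\ttau$ is positively $\rk$-expansive and not positively $(\rk-1)$-expansive.
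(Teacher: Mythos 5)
Your first half (positive $\rk$-expansiveness via Remark~\ref{r:marked} and Corollary~\ref{c:recover}) is exactly the paper's argument, and your reduction of the sharpness statement to exhibiting $\rk$ distinct configurations of $\Omega_\ttau$ sharing a common right half is also correct. The genuine gap is in the gluing step, which you yourself flag as ``the main obstacle.'' Your plan requires, cofinally in $t$, a letter $e_t$ that can be preceded by \emph{every} letter of $\A_t$ \emph{inside the shift} $\Omega_{\sigma^t\ttau}$ (your condition $Q_t$), and your fallback when this seed fails --- running ``the same compactness argument'' on the tree of left-extensions, ``showing it can be made to reach $\rk$'' --- is an assertion of the conclusion, not an argument. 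There is no reason for $Q_t$ to hold for general right-marked quasi-recognizable sequences, so as written the proof does not close.

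The condition you are trying to secure is in fact never needed, because of how $\Omega_\ttau$ is defined: it is $\bigcap_t \tau_{\co0t}(\A_t^\Z)$, where $\tau_{\co0t}(\A_t^\Z)$ is the set of configurations admitting a desubstitution that is an \emph{arbitrary} element of $\A_t^\Z$ --- the desubstitution is not required to lie in $\Omega_{\sigma^t\ttau}$, so no ``legality'' constraint on the junction $b_t e_t$ exists at all. This is what the paper exploits, and it makes the second half short: for each large $t$ (so that $\card{\A_t}=\rk$, by Remark~\ref{r:markrk}), fix any $z^t\in\A_t^\N$ and, for each $a\in\A_t$, any $x^{t,a}\in\A_t^\Z$ with $x^{t,a}_{-1}=a$ and $x^{t,a}_\N=z^t$; the images $\tau_{\co0t}(x^{t,a})$, aligned with a block boundary at $0$, lie in $\tau_{\co0t}(\A_t^\Z)$, coincide on $\N$, and differ at position $-1$ by right-markedness of $\tau_{\co0t}$. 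A single compactness argument, letting $t\to\infty$ and using that the sets $\tau_{\co0t}(\A_t^\Z)$ are closed and nested, produces $\rk$ configurations in $\Omega_\ttau$ that coincide on $\N$ and differ at $-1$, whence failure of positive $(\rk-1)$-expansiveness by Lemma~\ref{l:preds}. Note that this half of the proof uses neither quasi-recognizability nor your inverse-limit of coherent sequences; your left-tail construction $L(\boldsymbol b)$ is correct but can be bypassed entirely, since compactness at the end replaces any coherence requirement along levels.
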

\begin{proof}
The fact that it is $\rk$-expansive comes directly from Remark~\ref{r:marked} and Corollary~\ref{c:recover}.

As Remark~\ref{r:markrk} states, there is $t'\in{ \N}$ such that for every $t\ge t'$, $\card{\A_t} = \rk$.
For any $t\ge t'$, let us fix a one-sided sequence $z^t\in\A_t^{ \N}$, and for each $a\in\A_t$, consider some configuration $x^{t,a}$ such that $x^{t,a}_{-1}=a$ and $x^{t,a}_{ \N}=z^t$.
Since $\tau_{\co0t}$ is right-marked, all the configurations with standard desubstitution scheme $\kk$ for $\tau_{\co0t}$ such that $k_0 = 0$ and with desubstitution $x^{a,t}$ differ at position $-1$, but coincide over ${ \N}$.
By compactness, there exist $r$ configurations in $\Omega_\ttau$ which differ at position $-1$ but coincide over ${ \N}$.
This proves that $\Omega_\ttau$ is not positively $(\rk-1)$-expansive.
\end{proof}

\begin{example}
There are right-marked substitutions which are not quasi-recognizable, for instance:
\[\tau:\left|\begin{array}{ccl}a&\mapsto&aa\\b&\mapsto&bb\end{array}\right. \text{ and } \ \tau':\left|\begin{array}{ccl}a&\mapsto&aa\\b&\mapsto&ab\end{array}\right..\]
The corresponding substitution limit sets are not finitely positively expansive, because they admit some non-periodic asymptotically periodic configurations, like $\pinf a\uinf b$ for the first one and $\pinf ab\uinf a$ for the second.
As a consequence, it is not possible to drop the quasi-recognizability hypothesis in Corollary~\ref{c:marked} (and Corollary~\ref{c:recover}).
\end{example}


Here are some examples of applications of Corollary~\ref{c:marked}.
\begin{example}~\label{x:fibotm}
The Fibonacci ($\tau(a)=ab$ and $\tau(b)=a$) and Thue-Morse ($\tau(a)=ab$ and $\tau(b)=ba$) substitutions  are clearly right-marked and everywhere-growing and they are known to be quasi-recognizable \cite{mosse}.
Hence the two corresponding S-adic limit sets, and thus the two corresponding substitutive shifts 
are both positively $2$-expansive.
\end{example}

Another very classical class of substitutions connected to Corollary~\ref{c:recover} are return substitutions.
\begin{definition}
$\tau:\B\to\A^*$ is a \textbf{return substitution} with respect to some word $w\in\A^+$ if it is injective and $w$ appears exactly twice in each $\tau(a)w$, for $a\in\B$, once as a prefix (and once as a suffix).
A substitution $\tau:\B\to\A^*$ is \textbf{left-proper} if for every $a,b\in\B$, $\tau(a)_0=\tau(b)_0$.
A word $w\in\A^*$ is \textbf{nonoverlapping} if $\A^\ell w\cap w\A^\ell=\emptyset$ for every $\ell\in\co1{\len w}$.
\end{definition}
\begin{remark}~\label{r:proper}
    Every return substitution is 
    left-proper.
    Every injective left-proper substitution is $1$-right-recoverable.
    Every return substitution with respect to some word of length at least $2$ is expanding.
\end{remark}

\begin{proposition}\label{p:retrec}
    Let $\tau:\B\to\A^*$ be a return substitution with respect to some nonoverlapping word $w$.
 Then $\tau$ is recognizable.
\end{proposition}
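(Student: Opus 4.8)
The plan is to show that the cut points of any desubstitution of a configuration $x$ are intrinsic to $x$: namely, they coincide exactly with the starting positions of the occurrences of $w$ in $x$. Since $\tau$ is injective (this is part of the definition of a return substitution), once the cut points are fixed the desubstitution $y$ is uniquely recovered block by block; hence pinning down the cut points is enough to conclude that $x$ admits at most one desubstitution, which is recognizability.

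First I would extract two consequences of the nonoverlapping hypothesis. Because $w$ is nonoverlapping, any two distinct occurrences of $w$ inside a word over $\A$ must start at positions at distance at least $\len w$: an overlap of length $\ell\in\co1{\len w}$ between two occurrences would be a nontrivial border of $w$, contradicting $\A^\ell w\cap w\A^\ell=\emptyset$. Applying this to the two occurrences of $w$ in $\tau(a)w$, which by the return property sit as a prefix (at position $0$) and as a suffix (at position $\len{\tau(a)}$), forces $\len{\tau(a)}\ge\len w$. In particular each image $\tau(a)$ genuinely begins with a full copy of $w$.

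Next I would fix a desubstitutable $x$, a desubstitution $y$, and its cut points $\kk=(k_s)_{s\in\Z}$, so that $x_{\co{k_s}{k_{s+1}}}=\tau(y_s)$. For each $s$, the window $x_{\co{k_s}{k_{s+1}+\len w}}$ equals $\tau(y_s)w$, because the next block $\tau(y_{s+1})$ starts with $w$. The return property then says that $w$ occurs in this window exactly twice, at relative positions $0$ and $\len{\tau(y_s)}$, that is, at the absolute positions $k_s$ and $k_{s+1}$. Now any occurrence of $w$ in $x$ starts at a position $p$ lying in a unique interval $\co{k_s}{k_{s+1}}$, and since it ends strictly before $k_{s+1}+\len w$ it is contained in the window above; hence $p=k_s$. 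Conversely each $k_s$ is itself an occurrence of $w$. Therefore the set of occurrences of $w$ in $x$ is precisely $\sett{k_s}{s\in\Z}$.

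Since this characterization makes no reference to the chosen desubstitution, any two desubstitutions of $x$ share the same set of cut points; standardizing the indexing makes the scheme unique, and injectivity of $\tau$ then forces the two desubstitutions to coincide. The step to handle with care is the treatment of occurrences of $w$ straddling a block boundary: the nonoverlapping hypothesis is exactly what guarantees that each $\tau(a)$ begins with a complete copy of $w$ and that consecutive windows $\tau(y_s)w$ overlap in precisely this copy, so that no occurrence of $w$ can begin strictly inside a block. This is the crux of the argument; the remaining bookkeeping (uniqueness of the standard scheme and recovery of $y$ via injectivity) is routine.
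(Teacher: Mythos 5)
Your proof is correct and follows essentially the same approach as the paper: both arguments rest on the facts that nonoverlapping forces $\len{\tau(a)}\ge\len w$ (so every image begins with $w$) and that the return property (exactly two occurrences of $w$ in each $\tau(a)w$) forbids an occurrence of $w$ from starting strictly inside a block, after which injectivity of $\tau$ upgrades uniqueness of the standard scheme to uniqueness of the desubstitution. The only difference is presentational — you package this as a direct intrinsic characterization (cut points $=$ occurrences of $w$ in $x$), whereas the paper argues by contradiction comparing two standard schemes — and your explicit justification that $\len{\tau(a)}\ge\len w$ is a detail the paper leaves implicit.
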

\begin{proof}
    Since $\tau$ is injective, it is enough to prove quasi-recognizability.
    Suppose that there exists $x\in\A^\Z$ with two standard desubstitution schemes $\kk$ and $\kk'$.
    Since the images of letters all start with $w$, we get for every $i,j\in\Z$, $x_{\co{k_i}{k_i+\len w}}=w=x_{\co{k'_j}{k'_j+\len w}}$.
    Since $w$ is nonoverlapping, one can deduce that $k'_j\notin\co{k_i}{k_i+\len w}$.
    Suppose that there exists $i\in\Z$ such that $k_i\notin\sett{k'_j}{j\in\Z}$, and let $j$ be maximal such that $k'_j<k_i$.
    This maximality and the previous sentence give that $k'_{j+1}\ge k_i+\len w$.
    There exists $a\in\B$ such that $\tau(a)=x_{\co{k'_j}{k'_{j+1}}}$, but $\tau(a)w$ contains $x_{\co{k_i}{k_{i}+\len w}}=w$ as a strict factor, which contradicts the definition of return substitution.
    {We have proved that $\sett{k_i}{i\in\Z} \subset \sett{k'_j}{j\in\Z}$. By symmetry we have equality. Since the two desubstitution schemes are standard, they are equal.}
\end{proof}

%

\begin{corollary}\label{c:return}
    Let $\ttau$ be an everywhere-growing directive sequence of return substitutions with respect to nonoverlapping words, with finite rank $\rk$.
    Then $\Omega_\ttau$ is positively $\rk$-expansive.
\end{corollary}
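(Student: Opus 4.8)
The plan is to derive this corollary from Corollary~\ref{c:recover}, which asserts that an everywhere-growing, quasi-recognizable directive sequence of finite rank $\rk$ whose telescopings $\tau_{\co{t'}t}$ are eventually right-recoverable yields a positively $\rk$-expansive S-adic limit set. So the task reduces to checking that a directive sequence of return substitutions with respect to nonoverlapping words satisfies the three hypotheses of that corollary: everywhere-growing (given), finite rank (given), quasi-recognizable, and the right-recoverability of sufficiently long telescopings.

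First I would verify that each individual substitution $\tau_t$ is recognizable: this is exactly the content of Proposition~\ref{p:retrec}, since each $\tau_t$ is a return substitution with respect to a nonoverlapping word. To upgrade recognizability of the individual factors to quasi-recognizability of the whole directive sequence (that is, of every $\tau_{\co0t}$ over $\Omega_\ttau$), I would argue that a telescoping of recognizable substitutions is recognizable. Since each $\tau_t$ is injective and recognizable, a standard desubstitution scheme of $x\in\Omega_\ttau$ for $\tau_{\co0t}$ refines, step by step, through the unique desubstitution schemes for $\tau_0,\tau_1,\ldots,\tau_{t-1}$; uniqueness at each level forces uniqueness of the composite scheme, giving quasi-recognizability of $\tau_{\co0t}$.

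Next I would handle right-recoverability of the telescopings. By Remark~\ref{r:proper}, every return substitution is left-proper, and every injective left-proper substitution is $1$-right-recoverable; moreover a return substitution with respect to a word of length at least $2$ is expanding. I would then invoke Lemma~\ref{l:sufcompo}: if $\tau_{\co0{t'}}$ is right-recoverable and $\tau_{\co{t'}t}$ is $q$-right-recoverable, then their composition is $q\len{\tau_{\co0{t'}}}$-right-recoverable. Using that each factor is $1$-right-recoverable and that the everywhere-growing hypothesis makes $\len{\tau_{\co0{t'}}}$ grow, I would conclude that for every $t'$ and all sufficiently large $t$, the telescoping $\tau_{\co{t'}t}$ is right-recoverable, which is precisely the hypothesis required by Corollary~\ref{c:recover}.

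The main obstacle I anticipate is the interaction between recoverability and the length-$1$ degenerate cases: the clean statement "$1$-right-recoverable" presumes each factor is expanding (so that $q=1\in\co1{\len\tau}$ is meaningful), which requires the nonoverlapping words to have length at least $2$. Words of length $1$ would make a return substitution trivial or force $\len\tau=1$, so I would either note that the nonoverlapping and return conditions already exclude the problematic length-$1$ situation after telescoping, or appeal to the everywhere-growing hypothesis to pass to a telescoped expanding directive sequence (Remark~\ref{r:teles}) before applying the recoverability bookkeeping. Once those edge cases are dispatched, assembling the three verified hypotheses and citing Corollary~\ref{c:recover} closes the argument.
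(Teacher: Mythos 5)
Your proposal is correct and follows essentially the same route as the paper, whose entire proof is: Proposition~\ref{p:retrec} gives quasi-recognizability of $\ttau$, Remark~\ref{r:proper} gives that each return substitution is $1$-right-recoverable, and Corollary~\ref{c:recover} then applies. The only difference is one of explicitness: you spell out two steps the paper leaves implicit, namely passing from recoverability of the individual $\tau_t$ to recoverability of the telescopings $\tau_{\co{t'}t}$ via Lemma~\ref{l:sufcompo}, and the degenerate (non-expanding) case coming from length-one return words.
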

\begin{proof}
    Proposition~\ref{p:retrec} gives that $\ttau$ is quasi-recognizable.
    Remark~\ref{r:proper} implies that all {return} substitutions are $1$-recoverable.
    Corollary~\ref{c:recover} can then be applied.
\end{proof}
    If one drops the injectivity assumption from the definition of return substitution, one can prove quasi-recognizability (instead of recognizability), and still obtain positive $\rk^2$-expansiveness, thanks to the suffix code property and Corollary~\ref{c:sufcode}.

Example~\ref{x:fibotm} can be generalized to the class of Arnoux-Rauzy shifts (sometimes called episturmian, with some variants), which include all sturmian shifts.
One way to define them (see \cite{andrieu}) is as S-adic $\Z$-shifts corresponding to aperiodic directive sequences of (finitely many) substitutions of the form:
\[\tau_i:\left|\begin{array}{ccl}a_j&\mapsto&a_ia_j\\a_i&\mapsto&a_i\end{array}\right.,\]
where $i\in\co0\rk$ and $\rk\ge 2$ is the alphabet size.
\begin{corollary}\label{c:arauzy}
    Every Arnoux-Rauzy shift over $\rk$ letters is positively $\rk$-expansive (and not positively $(\rk-1)$-expansive).
\end{corollary}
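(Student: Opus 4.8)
The plan is to verify that Arnoux-Rauzy shifts satisfy the hypotheses of Corollary~\ref{c:marked}, which will immediately give both the positive $\rk$-expansiveness and the failure of positive $(\rk-1)$-expansiveness. So I would check, in turn, that the directive sequence is everywhere-growing, right-marked, quasi-recognizable, and of finite rank $\rk$.

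First I would observe that the rank is exactly $\rk$: every alphabet is the fixed set $\{a_0,\ldots,a_{\rk-1}\}$, so $\card{\A_t}=\rk$ for all $t$ and the liminf defining the rank equals $\rk$. Next, each $\tau_i$ is right-marked: reading off the last letter of the images, $\tau_i(a_j)$ ends in $a_j$ for $j\ne i$ and $\tau_i(a_i)$ ends in $a_i$, so the map $a\mapsto\tau_i(a)_{\len{\tau_i(a)}-1}$ is the identity, hence one-to-one. Thus the directive sequence is right-marked. For the everywhere-growing property, I would invoke the aperiodicity assumption on the directive sequence: each letter $a_i$ must be used as an index $i$ infinitely often (otherwise the shift would reduce to fewer letters and be eventually periodic), and each application of a $\tau_i$ weakly increases lengths while the aperiodic mixing of indices forces $\len{\tau_{\co0t}}\to\infty$. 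Concretely, one can telescope (Remark~\ref{r:teles}) by grouping consecutive substitutions until every index has appeared, yielding an expanding telescoping.

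The one genuinely substantial point is quasi-recognizability of every $\tau_{\co0t}$ over $\Omega_\ttau$. Here I would appeal to the classical fact, due to Mossé and extended to the S-adic setting (as cited via \cite{mosse,BST,bustos} earlier), that aperiodic primitive (or more generally everywhere-growing recognizable) S-adic sequences of Arnoux-Rauzy type are recognizable; the Arnoux-Rauzy substitutions are known to be recognizable, and this property is preserved under the relevant compositions. This is the step I expect to be the main obstacle, since quasi-recognizability of the whole sequence is the least elementary hypothesis and the one that genuinely uses the structure theory of episturmian words rather than a direct combinatorial check.

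Once these four properties are in hand, Corollary~\ref{c:marked} applies verbatim and yields that $\Omega_\ttau$ is positively $\rk$-expansive and not positively $(\rk-1)$-expansive. Finally I would note that the Arnoux-Rauzy shift in the usual sense is contained in $\Omega_\ttau$, so positive $\rk$-expansiveness transfers to it by Remark~\ref{remark:elem}(4); and the non-positive-$(\rk-1)$-expansiveness follows from exhibiting, as in the proof of Corollary~\ref{c:marked}, $\rk$ configurations agreeing on $\N$ but differing at position $-1$, which the right-marked structure supplies directly. Specializing to $\rk=2$ recovers the Sturmian case from Example~\ref{x:fibotm}.
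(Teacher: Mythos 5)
Your skeleton coincides with the paper's: both arguments reduce to Corollary~\ref{c:marked}, and your verifications of right-markedness (the last-letter map is the identity), of the rank being exactly $\rk$, and of the everywhere-growing property via telescoping (Remark~\ref{r:teles}, using that at least two distinct substitutions occur infinitely often, so that composed blocks have minimal length at least $2$) all match the paper. The problem is the quasi-recognizability step, which you correctly single out as the crux but then settle by citation rather than by proof, and the citations do not actually cover it. Moss\'e's theorem concerns \emph{primitive} aperiodic substitutions, whereas each Arnoux-Rauzy substitution $\tau_i$ is not primitive (since $\tau_i(a_i)=a_i$, no other letter ever appears in $\tau_i^n(a_i)$; $\tau_i$ is not even expanding). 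The S-adic recognizability results you allude to give either \emph{eventual} recognizability --- which, as the paper itself warns in a later remark, is not sufficient, because the transient levels could in principle produce infinitely many left tails for a single right half --- or recognizability over aperiodic configurations, which you could only invoke after proving that every level shift $\Omega_{\sigma^t\ttau}$ is aperiodic, something your proposal never establishes. Finally, the clause ``everywhere-growing recognizable S-adic sequences are recognizable'' is circular as stated.

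The paper closes exactly this gap with a short structural observation that your proposal misses: each $\tau_i$ is a \emph{return substitution} with respect to the word $a_i$ (the letter $a_i$ occurs exactly twice in $\tau_i(a)a_i$ for every letter $a$, once as a prefix and once as a suffix), and a one-letter word is trivially nonoverlapping; Proposition~\ref{p:retrec} then yields recognizability of each $\tau_i$ over the full shift, hence quasi-recognizability of the directive sequence in the sense required by Corollary~\ref{c:marked}, with no appeal to outside literature. Substituting this observation for your citation paragraph turns your proposal into the paper's proof; without it, the main nontrivial hypothesis of Corollary~\ref{c:marked} remains unverified.
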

\begin{proof}
    This directly comes from Corollary~\ref{c:marked}, the fact that substitution $\tau_i$ is a right-marked return substitutions with respect to nonoverlapping word $a_i$, which implies recognizability by Proposition~\ref{p:retrec}, and that the directive sequences involving infinitely many of at least two distinct substitutions are everywhere-growing.
\end{proof}

{
Corollary~\ref{c:return} can be applied to much more general class of subshifts than Arnoux-Rauzy shift. By classical results \cite{sadicmin}, every minimal $\Z$-shift can be expressed as an S-adic $\Z$-shift corresponding to an everywhere-growing directive sequence of return substitutions with respect to words of length $1$ (which are clearly nonoverlapping).  Unfortunately, this classical construction often yields an infinite-rank directive sequence (even in cases when the shift could also correspond to a finite-rank one), which prevents application of our results.
Indeed, (infinite-rank) minimal shifts may for instance have positive entropy 
and thus cannot be finitely positively expansive, by Theorem~\ref{thm:entropy.zero.gen}.
Yet, our corollary applies to many minimal $\Z$-shifts, including examples presented earlier in this text.
In particular, there are some criteria for minimal $\Z$-shifts to have finite rank, such as having non-superlinear complexity \cite[Theorem~5.5]{donoso}.
Interested readers are referred to \cite{donoso} and references therein for more details. In what follows we will provide an example of technique of this kind for T\oe plitz shifts.
}

\begin{definition}
  A substitution is said to be a \textbf{T\oe plitz} substitution if it is left-proper, uniform, injective and expanding.
    A \textbf{T\oe plitz} limit set is an the limit set of a directive sequence of T\oe plitz substitutions.
\end{definition}
{
  Note that the expanding assumption is automatic whenever the alphabet is non-trivial.
An important remark is that the composition of finitely many T\oe plitz substitutions is also a T\oe plitz substitution.
}

T\oe plitz shifts are more commonly defined through their quasi-periodic structure, but the definitions are equivalent (see \cite[Prop 4.70]{kurka} for instance).
{
  Let us prove a preliminary result.
  A directive sequence is said to be \textbf{weakly primitive} if:
    \[\forall t\in{ \N},\exists t'\ge t,\forall a\in\A_{t'},{\forall b\in\A_{t}},b\text{ appears in }\tau_{\co t{t'}}(a).\]
\begin{proposition}\label{p:topmin}~\begin{enumerate}
  \item Every T\oe plitz limit set $\Omega_\ttau$ is aperiodic, unless for all sufficiently large $t\in{ \N}$, there exists $a_t\in\A_t$ such that $\tau_t(a_{t+1})=a_t^{\len{\tau_t}}$.
  \item Every weakly primitive T\oe plitz limit set is minimal (in particular it is equal to the classical T\oe plitz shift). 
\end{enumerate}\end{proposition}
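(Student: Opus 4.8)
The plan is to treat the two items separately, in both cases exploiting two structural facts. First, any $x\in\Omega_\ttau$ admits a coherent tower of desubstitutions $(y^{(t)})_{t\in\N}$, with $y^{(t)}\in\A_t^\Z$ and $y^{(t)}=\sigma^{r_t}\tau_t(y^{(t+1)})$ for suitable offsets $r_t$; this is obtained by a compactness (König-type) argument from the fact that $x$ desubstitutes at every level and that level $t{+}1$ desubstitutions refine level $t$ ones. Second, each $\tau_{\co0t}$ is again a T\oe plitz substitution, in particular left-proper and uniform of length $\ell_t\defeq\len{\tau_{\co0t}}=\prod_{s<t}\len{\tau_s}$, which tends to infinity since every $\tau_s$ is expanding.

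For Item~1 I would start from a periodic $x\in\Omega_\ttau$ with primitive period $p$ and first show that each $y^{(t)}$ is periodic: since $\tau_{\co0t}$ is uniform of length $\ell_t$ and injective, the identity $x_j=x_{j+q\ell_t}$ (valid whenever $p\mid q\ell_t$) forces $y^{(t)}_i=y^{(t)}_{i+q}$, so $y^{(t)}$ has period $p/\gcd(p,\ell_t)$. Writing $p'_t$ for the primitive period of $y^{(t)}$, the same desubstitution computation applied one level up gives $p'_{t+1}\mid p'_t/\gcd(p'_t,\len{\tau_t})$, so $(p'_t)$ is non-increasing and stabilises to some $q\ge1$; moreover, for all large $t$ one reads off $\gcd(q,\len{\tau_t})=1$. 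The crux is then a left-properness argument: letting $c_t$ be the common first letter of the images of $\tau_t$, the relation $y^{(t)}=\sigma^{r_t}\tau_t(y^{(t+1)})$ puts $c_t$ at every position of $y^{(t)}$ congruent to $-r_t$ modulo $\len{\tau_t}$; as $\gcd(q,\len{\tau_t})=1$, these positions meet every residue class modulo $q$, and since $y^{(t)}$ has period $q$ it must be the constant sequence $\dinf{c_t}$, forcing $q=1$. Hence $y^{(t)}=\dinf{a_t}$ for all large $t$, and then $\dinf{a_t}=\sigma^{r_t}\dinf{\tau_t(a_{t+1})}$ yields exactly $\tau_t(a_{t+1})=a_t^{\len{\tau_t}}$, the announced degenerate condition.

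For Item~2 I would reduce minimality to uniform recurrence: it suffices to prove that every word $w\in\lang{\Omega_\ttau}$ appears syndetically in every $x\in\Omega_\ttau$. The key lemma I would establish is that every $w$ is eventually \emph{covered}, i.e. there is a level $t'$ with $w$ a factor of $\tau_{\co0{t'}}(a)$ for every $a\in\A_{t'}$. Granting this, desubstituting $x$ at level $t'$ writes it as a concatenation of blocks $\tau_{\co0{t'}}(y_i)$, each of length at most $\norm{\tau_{\co0{t'}}}$ and each containing $w$; hence $w$ recurs with gap at most $2\norm{\tau_{\co0{t'}}}$, giving uniform recurrence and therefore minimality. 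The classical T\oe plitz shift, being a non-empty closed invariant subset of the now minimal system $\Omega_\ttau$, must then coincide with it.

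To prove the covering lemma I would argue as follows. Choosing $t$ with $\len{\tau_{\co0t}}>\len w$, the word $w$ sits inside $\tau_{\co0t}(bc)$ for two consecutive letters $bc\in\lang[2]{\Omega_{\sigma^t\ttau}}$, so it is enough to cover the length-two word $bc$ at level $t$. Desubstituting once more, $bc$ either lies inside a single image $\tau_t(d)$, or straddles a boundary, in which case left-properness forces its second letter to be the common first letter $c_t$ and its first letter to be the last letter of some $\tau_t(d)$. In the first case weak primitivity covers the single letter $d$ at some level $s'$, and applying $\tau_t$ transports this into a covering of $\tau_t(d)$, which contains $bc$. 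In the second case I would cover $d$ at a level high enough that, using expandingness ($\len{\tau_s}\ge2$), $d$ is guaranteed to occur in a non-final position of each image, so that the induced internal block boundary reproduces the pair $bc$; weak primitivity again supplies the required level. I expect this boundary bookkeeping to be the main obstacle: single letters are handled directly by weak primitivity, but promoting the covering to length-two patterns requires combining left-properness (to pin down boundary letters) with expandingness (to force letters to occur non-finally), while keeping the offsets $r_t$ and the two possible positions of $w$ across a block boundary under control.
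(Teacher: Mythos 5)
Your proposal is correct and follows essentially the same route as the paper for both items. For Item~1, your stabilization argument (the primitive periods $p'_t$ of the desubstitution tower are non-increasing, hence eventually constant and coprime to $\len{\tau_t}$, at which point left-properness and the arithmetic-progression argument force $y^{(t)}=\dinf{c_t}$ and then $\tau_t(a_{t+1})=a_t^{\len{\tau_t}}$) is a mild reorganization of the paper's proof, which instead descends on the minimal period of the whole limit set (lcm transfer through the injective uniform substitution until the coprime case is reached); the mathematical content --- uniformity plus injectivity to transfer periods, coprimality plus the common first letter to force constancy --- is identical. For Item~2, your covering lemma, the reduction of $w$ to a word $bc$ of length at most $2$, and the case split (inside one block versus straddling a boundary, with left-properness pinning the second letter to the common first letter) is exactly the paper's argument.

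The one step that does not hold as written is your claim, in the straddling case, that expandingness guarantees $d$ occurs in a non-final position of each image: weak primitivity only gives an occurrence of $d$ somewhere in $\tau_{\co{t+1}{s'}}(e)$, and no amount of expandingness at that level rules out that for some $e$ this occurrence sits at the very end. The repair is one line: after covering $d$ at level $s'$, pass to any level $s''>s'$; each $\tau_{\co{t+1}{s''}}(e')$ is then a concatenation of at least two blocks of the form $\tau_{\co{t+1}{s'}}(\cdot)$ (at least two because $\len{\tau_{\co{s'}{s''}}}\ge2$ by expandingness), each containing $d$, so the occurrence of $d$ in the first block is non-final and is followed, after applying $\tau_t$, by the common first letter $c_t$. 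The paper sidesteps this bookkeeping entirely by phrasing covering as syndetic occurrence in the bi-infinite configurations of $\Omega_{\sigma^{t+1}\ttau}$ (every word of length $2\len{\tau_{\co{t+1}{t'}}}-1$ contains a full block), where every occurrence of a letter automatically has a successor; you may find that formulation slightly cleaner than tracking positions inside finite images.
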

\begin{proof}~\begin{enumerate}
  \item Let $p\ge 1$ be the minimal possible period for all possible configurations $x\in\Omega_\ttau$.
    Up to a shift, one can assume that $x=\tau_0(y)$ for some $y\in\Omega_{\sigma\ttau}$.
    \begin{itemize}\item If $p$ is prime with $\len{\tau_0}$, then for every $i\in\Z$, there exists $k,\ell\in\Z$ such that $i+kp=\ell\len{\tau_0}$.
    We obtain that $x_i=x_{i+kp}=x_{\ell\len{\tau_0}}$ is the first symbol $a_0$ of the common prefix, i.e. $x=\dinf{a_0}$, and $p=1$ by minimality.
    By injectivity, one gets that $y=\dinf{a_1}$ for some symbol $a_1\in\A_1$, and iteratively a sequence of symbols $a_t\in\A_t$ such that $\tau_t(a_{t+1})=a_t^{\len{\tau_t}}$.
    \item Otherwise, the least common multiple $p\wedge\len{\tau_0}$ is also a period for $x$, and by injectivity of $\tau_0$, $p\wedge\len{\tau_0}/\len{\tau_0}$ is a period for $y$.
      Hence $\Omega_{\sigma\ttau}$ admits a smaller period than $p$, and the other item will be eventually reached.
    \end{itemize}
  \item Let us prove that every $u\in\lang{\Omega_\ttau}$ appears in every configuration $x\in\Omega_\ttau$.
    Since $\ttau$ is expanding, there exists $t\in{ \N}$ such that $\len u\le\len{\tau_{\co0t}}$.
    By definition, $u$ appears in some $\tau_{\co0t}(v)$, and by the previous inequality, $v$ can be assumed to have length $1$ or $2$.
    If $\len v=1$, primitivity gives some $t'\ge t$ such that $v$ appears in $\tau_{\co t{t'}}(a)$ for every letter $a\in\A_{t'}$.
    In particular, $v$ appears in every word of length $2\len{\tau_{\co t{t'}}}-1
    $ in $\Omega_{\sigma^t\ttau}$, hence $u$ appears in every word of length $2\len{\tau_{\co0{t'}}}
    $ in $\Omega_\ttau$.
    If, on the other hand, $\len v=2$, then either $v$ appears in some $\tau_t(a)$ for some $a\in\A_t$ or $v=\tau_t(a)_{\len{\tau_t}-1}\tau_t(b)_0$.
    In both cases, we can apply the previous argument to $a$ instead of $w$ and $t+1$ instead of $t$, so that, for some $t'\ge t+1$, $a$ appears in every word of length $2\len{\tau_{\co{t+1}{t'}}}-1
    $ in $\Omega_{\sigma^{t+1}\ttau}$.
    In the first case, we get that $u$ appears in every word of length $2\len{\tau_{\co0{t'}}}
    $ in $\Omega_\ttau$.
    In the second case, note that $\tau_t(b)_0$ is the common first letter for every $\tau_t(c)$, where $c\in\A_{t+1}$.
    Hence $\tau_t(a)\tau_t(b)_0$ appears in every word of length $2\len{\tau_{\co t{t'}}}
    +1$ in $\Omega_{\sigma^t\ttau}$, and we obtain that $u$ appears in every word of length $2\len{\tau_{\co0{t'}}}+\len{\tau_{\co0t}}$ in $\Omega_{\sigma^t\ttau}$.
  \popQED\end{enumerate}\end{proof}
}

\begin{corollary}\label{c:top}
    Every recognizable {limit set of T\oe plitz directive sequences} with finite rank $\rk\ge 1$ is positively $\rk$-expansive.
\end{corollary}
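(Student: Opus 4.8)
The statement to prove is Corollary~\ref{c:top}: every recognizable Toeplitz limit set with finite rank $\rk\ge1$ is positively $\rk$-expansive. The plan is to reduce this to one of the already-established corollaries about S-adic limit sets, most naturally Corollary~\ref{c:recover} (which yields positive $\rk$-expansiveness for everywhere-growing quasi-recognizable finite-rank directive sequences in which $\tau_{\co{t'}t}$ is right-recoverable for every $t'$ and all sufficiently large $t$). So the whole argument amounts to checking that a recognizable directive sequence of Toeplitz substitutions satisfies each hypothesis of Corollary~\ref{c:recover}.

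First I would unwind the definitions. A Toeplitz substitution is by definition left-proper, uniform, injective and expanding. Since the composition of finitely many Toeplitz substitutions is again Toeplitz (as noted just after the definition), each $\tau_{\co{t'}t}$ is a Toeplitz substitution, hence in particular injective and left-proper. By Remark~\ref{r:proper}, every injective left-proper substitution is $1$-right-recoverable; thus every $\tau_{\co{t'}t}$ is right-recoverable, which handles the recoverability hypothesis of Corollary~\ref{c:recover} (and in fact with the strongest possible radius). The everywhere-growing hypothesis follows because each $\tau_t$ is expanding, so $\len{\tau_t}\ge2$; by Remark~\ref{r:teles} an expanding directive sequence is everywhere-growing (here the telescoping is trivial). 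Quasi-recognizability is handed to us directly: the hypothesis is that the limit set is \emph{recognizable}, and recognizability of every $\tau_{\co0t}$ over $\Omega_\ttau$ implies quasi-recognizability by definition. Finally the rank is finite, equal to $\rk$, by assumption.

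With all four hypotheses of Corollary~\ref{c:recover} verified, the conclusion that $\Omega_\ttau$ is positively $\rk$-expansive is immediate. I would also separately dispatch the degenerate case $\rk=1$: by Remark~\ref{r:rk1}, a rank-$1$ directive sequence produces a finite (periodic) orbit, which is trivially positively $1$-expansive, so the statement holds there too (and is in fact already subsumed by the $\rk\ge2$ machinery through Corollary~\ref{c:recover}, which itself invokes Remark~\ref{r:rk1} at rank $1$).

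I do not anticipate a genuine obstacle here, since the corollary is essentially a packaging result: the real work was done in establishing recoverability from left-properness (Remark~\ref{r:proper}) and in Corollary~\ref{c:recover} itself. The only point requiring a little care is making sure the recoverability hypothesis is stated for the \emph{telescoped} blocks $\tau_{\co{t'}t}$ rather than for the individual $\tau_t$, and confirming that the closure-under-composition property of Toeplitz substitutions is exactly what delivers right-recoverability of those blocks. If one wished to avoid quoting the composition-closure remark, one could instead invoke Lemma~\ref{l:sufcompo} to propagate right-recoverability through the composition directly, but using the Toeplitz composition-closure statement is the cleanest route.
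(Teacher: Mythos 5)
Your proof is correct and follows essentially the same route as the paper, which deduces the corollary directly from Corollary~\ref{c:recover} together with the second statement of Remark~\ref{r:proper} (injective left-proper substitutions are $1$-right-recoverable), using closure of T\oe plitz substitutions under composition. Your write-up simply makes explicit the verification of each hypothesis (everywhere-growing from expandingness, quasi-recognizability from the recognizability assumption, finite rank) that the paper's one-line proof leaves implicit.
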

\begin{proof}
    The statement directly derives from Corollary~\ref{c:recover} and (the second statement of) Remark~\ref{r:proper}.
\end{proof}

There are criteria for recognizability of T\oe plitz $\Z$-shifts, like the following one.
For every finite word $u$, we call \textbf{period} of $u$ any $p\ge 1$ such that for every $i<\len u$ with $i+p<\len u$, we have $u_i=u_{i+p}$. 

For a left-proper substitution, we call \textit{maximal common prefix} of this substitution the maximal non-empty word which is prefix of all of its images.

\begin{proposition}\label{p:toprec}
  Assume that $\tau:\B\to\A^*$ is a T\oe plitz substitution whose maximal common prefix $u$ has smallest period $p$.
  Then $\tau$ is recognizable over the set $\tau(X)$, where $X$ is the set of $x \in \A^\Z$ satisfying the three following properties: there exists $i\in\Z$ such that $\tau(x_i)$ does not start by $uu_{\co{\len u-p}{\len u}}$; there exists $i\in\Z$ such that $\tau(x_i)$ does not end by $u_{\co0p}$; and there exists $i\in\Z$ such that $\tau(x_i)$ does not contain an occurrence of $u$ disjoint from its prefix $u$.

  In particular, if $u$ is nonoverlapping and has length $\len u\ge\len\tau/2$, then $\tau$ is recognizable over the set of aperiodic configurations.
\end{proposition}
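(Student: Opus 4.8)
The plan is to prove \textbf{recognizability} via \textbf{quasi-recognizability}, exactly as in the proof of Proposition~\ref{p:retrec}: since $\tau$ is injective, the desubstitution is determined by its scheme, so it suffices to show that every $y\in\tau(X)$ admits a unique standard desubstitution scheme. Because $\tau$ is uniform, writing $\ell=\len\tau$ for the common image length, a standard scheme is the arithmetic progression $k_s=k_0+s\ell$, so uniqueness amounts to uniqueness of the phase $k_0\bmod\ell$. I would argue by contradiction: fix $x\in X$ and suppose $y=\tau(x)$ admits a second scheme. Let $T\subset\Z$ be the cut-set of the canonical scheme (after a shift, $T=\ell\Z$) and $T'$ that of the second one, and set $d\in\co1\ell$ for their offset. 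Since $\tau$ is left-proper and every image $\tau(a)$ begins with the maximal common prefix $u$ (so $\len u\le\ell$), the word $u$ occurs at every element of $T$ and of $T'$, hence at all positions in $\ell\Z\cup(d+\ell\Z)$; consequently consecutive occurrences of $u$ in $y$ are at distances alternating between $d$ and $\ell-d$.

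Next I would treat the \textbf{disjoint case}, where $d\ge\len u$ and $\ell-d\ge\len u$. Then inside each canonical image $\tau(x_j)=y_{\co k{k+\ell}}$ the occurrence of $u$ at offset $d$ fits entirely (as $d+\len u\le\ell$) and starts at offset $d\ge\len u$, hence is disjoint from the prefix $u$. This holds for every letter $x_j$ appearing in $x$, so no image avoids such an occurrence: the third defining property of $X$ fails, contradicting $x\in X$.

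The \textbf{overlapping case} is the heart of the argument, and the place I expect the main obstacle. If $d<\len u$ (respectively $\ell-d<\len u$), the two $u$-occurrences at distance $d$ (respectively $\ell-d$) overlap, so that $d$ (respectively $\ell-d$) is a period of $u$ and is therefore at least the smallest period $p$. Merging this period run with the period $p$ of $u$ via the classical Fine--Wilf periodicity lemma, the period-$p$ pattern propagates through the image: a $d$-overlap forces every canonical image to begin with $v=u\,u_{\co{\len u-p}{\len u}}$, which is the length-$(\len u+p)$ prefix of $\uinf w$ for $w=u_{\co0p}$, contradicting the first defining property of $X$; symmetrically a $(\ell-d)$-overlap forces every image to end with the period block $w=u_{\co0p}$, contradicting the second property. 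The delicate regime is the tight one where $d+p>\len u$ (or $\ell-d+p>\len u$): there Fine--Wilf does not immediately merge the two runs, and one must show that either the smallest period $p$ divides the gap, which returns us to the clean propagation above, or the whole of $y$ is globally periodic. In the latter degenerate situation every image is a power of $w$, so $\tau$ is constant and fails injectivity unless the alphabet is trivial; this excludes it.

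Finally I would derive the \textbf{in particular} statement. If $u$ is nonoverlapping, its only period is $p=\len u$, so no overlap can occur and we are always in the disjoint case, forcing $d\ge\len u$ and $\ell-d\ge\len u$; combined with $\len u\ge\len\tau/2$ these give $d=\ell-d=\len u$ and $\ell=2\len u$. Then $u$ occurs at every multiple of $\len u$, so each image equals $uu$ and $y=\tau(x)$ is periodic of period $\len u$; restricting to aperiodic configurations excludes this. Hence no second phase exists, which yields uniqueness of the standard scheme and, by injectivity of $\tau$, recognizability over the aperiodic configurations.
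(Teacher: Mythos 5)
Your overall strategy is the same as the paper's: injectivity reduces recognizability to quasi-recognizability, uniformity makes every standard desubstitution scheme an arithmetic progression with step $\len\tau$, and one argues by cases on the offset $d$ between two such progressions. Your disjoint case matches the paper's middle case, and your overlap case is sound exactly in the regime where Fine--Wilf applies, i.e.\ when $d+p\le\len u$: there $\gcd(d,p)$ is a period of $u$, hence equals $p$ by minimality of $p$, so $p\mid d$ and the propagation you describe goes through. The genuine gap is precisely where you place it, in the ``delicate regime'' $d+p>\len u$, but your proposed repair does not work. The dichotomy ``either $p$ divides the gap or $y$ is globally periodic'' is asserted rather than proved, and the conclusion you draw in the second branch is false: periodicity of $y$ does not force the images $\tau(a)$ to be powers of $u_{\co0p}$, so no contradiction with injectivity arises.

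In fact that branch cannot be closed, because the general part of the proposition fails there. Take $\A=\{a,b\}$, $\B=\{A,B\}$ and the T\oe plitz substitution $\tau(A)=aabaabbaaba$, $\tau(B)=aabaaabaabb$ (uniform of length $11$, left-proper, injective, expanding). Its maximal common prefix is $u=aabaa$, with smallest period $p=3$; but $u$ also has period $4$, which is not a multiple of $3$ --- periods of a word need not be multiples of the smallest one when Fine--Wilf does not apply, and this is exactly the point both you and the paper's own proof overlook (the paper's first case asserts ``$-k_0$ is a period for $u$, hence a multiple of $p$'', which is wrong here with $-k_0=4$). Now $x=\dinf A$ lies in $X$: the word $\tau(A)$ does not start with $uu_{\co25}=aabaabaa$, does not end with $u_{\co03}=aab$ (it ends with $aba$), and contains no occurrence of $u$ other than its prefix. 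Yet $z=\tau(x)=\dinf{(aabaabbaaba)}$ admits two distinct standard desubstitution schemes: $(11i)_{i\in\Z}$ with desubstitution $\dinf A$, and $(11i-4)_{i\in\Z}$ with desubstitution $\dinf B$, since $z_{\co{11i-4}{11i+7}}=aaba\cdot aabaabb=aabaaabaabb=\tau(B)$. Hence $\tau$ is not recognizable over $\tau(X)$. So your instinct about where the difficulty sits was exactly right --- it is the same soft spot as in the published proof --- but neither your patch nor the paper's unproved assertion survives it. Only the ``in particular'' statement (nonoverlapping $u$, which is the case actually used later in the paper) is safe, since for a nonoverlapping word every period at most $\len u$ equals $\len u$, so the delicate regime is vacuous; your treatment of that case is correct.
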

Before proving the proposition, let us state some consequence.
\begin{corollary}
  Let $\ttau$ be a directive sequence of T\oe plitz substitutions with rank $\rk\ge 1$, such that for every $t\in{ \N}$, the maximal common prefix of $\tau_t$ is nonoverlapping and has length at least $\len{\tau}/2$.
  Then $\Omega_\ttau$ is positively $\rk$-expansive.
\end{corollary}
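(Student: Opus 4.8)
The plan is to derive the statement from Corollary~\ref{c:top}, i.e. to show that $\Omega_\ttau$ is a \emph{recognizable} limit set of a T\oe plitz directive sequence, after which positive $\rk$-expansiveness is immediate. Since the composition of finitely many T\oe plitz substitutions is again T\oe plitz, each $\tau_{\co0t}$ is T\oe plitz, so the only real content is to prove that every $\tau_{\co0t}$ is recognizable over $\Omega_\ttau$.

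First I would reduce the recognizability of $\tau_{\co0t}$ over $\Omega_\ttau$ to the recognizability of the individual substitutions $\tau_s$ over the shifted limit sets $\Omega_{\sigma^s\ttau}$. Indeed, if $x\in\Omega_\ttau$ had two desubstitutions $y,y'$ for $\tau_{\co0t}$, then $z\defeq\tau_{\co1t}(y)$ and $z'\defeq\tau_{\co1t}(y')$ would both lie in $\Omega_{\sigma\ttau}$ and both be desubstitutions of $x$ for $\tau_0$; recognizability of $\tau_0$ over $\Omega_{\sigma\ttau}$ forces $z=z'$, and iterating this single-step argument through $\tau_1,\dots,\tau_{t-1}$ yields $y=y'$. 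So it suffices to show each $\tau_s$ is recognizable over $\Omega_{\sigma^s\ttau}$.

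To get this from Proposition~\ref{p:toprec}, I must check that each $\Omega_{\sigma^s\ttau}$ consists of aperiodic configurations, because the ``in particular'' clause of that proposition only grants recognizability over aperiodic configurations. This is exactly where the nonoverlapping hypothesis should be used: by the first item of Proposition~\ref{p:topmin}, a T\oe plitz limit set fails to be aperiodic only if, for all large $s$, some letter satisfies $\tau_s(a_{s+1})=a_s^{\len{\tau_s}}$; but then the maximal common prefix $u$ of $\tau_s$, being a prefix of $a_s^{\len{\tau_s}}$, would be a power $a_s^{\len u}$ of a single letter, which is overlapping as soon as $\len u\ge 2$. Since $\len u\ge\len{\tau_s}/2$, this degeneracy is excluded, $\Omega_{\sigma^s\ttau}$ is aperiodic, and Proposition~\ref{p:toprec} then gives recognizability of $\tau_s$ over the aperiodic configurations, hence over $\Omega_{\sigma^s\ttau}$. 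Combining with the previous paragraph, $\Omega_\ttau$ is a recognizable T\oe plitz limit set, and Corollary~\ref{c:top} concludes.

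The hard part will be precisely this aperiodicity step. The overlapping argument cleanly forbids the constant-image degeneracy only once $\len u\ge 2$, i.e. comfortably when $\len{\tau_s}\ge 3$; the boundary situation $\len{\tau_s}=2,\ \len u=1$ deserves separate care, since there the maximal common prefix is a single (vacuously nonoverlapping) letter and the limit set may contain configurations asymptotic to a periodic one (as for $a\mapsto aa,\ b\mapsto ab$), which are recognizable over the aperiodic part yet obstruct finite positive expansiveness by Corollary~\ref{cor:periodic}. I would therefore pay particular attention to verifying that the length bound genuinely forces $\len u\ge 2$ in all relevant cases, and that recognizability over these aperiodic configurations is exactly what Corollary~\ref{c:top} (through Corollary~\ref{c:recover}) needs, rather than the a priori stronger quasi-recognizability.
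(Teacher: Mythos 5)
Your route is exactly the paper's: use the nonoverlapping hypothesis to rule out the degenerate alternative of Proposition~\ref{p:topmin}, deduce aperiodicity, feed this into Proposition~\ref{p:toprec} to get recognizability, and conclude with Corollary~\ref{c:top}. Your explicit reduction of recognizability of the compositions $\tau_{\co0t}$ to that of the individual $\tau_s$ is what the paper builds directly into its definition of a quasi-recognizable directive sequence (the parenthetical ``in particular'' clause); it is fine, apart from one index slip: $z=z'$ follows from recognizability of $\tau_0$ over $\Omega_{\ttau}$, not over $\Omega_{\sigma\ttau}$, since in the paper's convention the set in ``recognizable over'' is where the configuration being desubstituted lives.

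The boundary case you single out is a genuine gap, and not one you can close, because there the statement itself fails; it is in fact a gap in the paper's own proof. That proof asserts that a maximal common prefix of the form $a_t^{p_t}$ is overlapping, which is false precisely when $p_t=1$: a single letter is vacuously nonoverlapping. The hypotheses do allow $\len{\tau_t}=2$ with a length-one maximal common prefix for every $t$, and in that regime the paper's own example following Corollary~\ref{c:marked} is a counterexample to the corollary: $\tau'\colon a\mapsto aa,\ b\mapsto ab$ is T\oe plitz of rank $2$, its maximal common prefix $a$ is nonoverlapping of length $1=\len{\tau'}/2$, yet $\Omega_{\tau'}$ contains the non-periodic asymptotically periodic configuration $\pinf ab\uinf a$, hence is not finitely positively expansive by Corollary~\ref{cor:periodic} (concretely, $\uinf a$ has the infinitely many predecessors $\pinf aba^k$, $k\in\N$). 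So the length bound does \emph{not} force $\len u\ge2$, contrary to what would be needed. Your argument (like the paper's) becomes complete exactly under an extra hypothesis excluding this regime, e.g. that $\len{u_t}\ge2$ (for instance $\len{\tau_t}\ge3$) for infinitely many $t$, or directly that $\Omega_\ttau$ is aperiodic; then the degenerate alternative of Proposition~\ref{p:topmin} is excluded and everything else you wrote goes through.
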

\begin{proof}
  The fact that the maximal common prefix is nonoverlapping makes it impossible to have that for all $t\in{ \N}$ sufficiently large, there exists $a_t\in\A_t$ such that $\tau_t(a_{t+1})=a_t^{\len{\tau_t}}$. Indeed, under this hypothesis for all $t$ the maximal 
common prefix of $\tau_t$ would be of the form $a_t^{p_t}$, where $p_t$ is an integer, which implies that it is overlapping. By the first point of Proposition~\ref{p:topmin}, $\Omega_{\boldsymbol{\tau}}$ is aperiodic. 
Hence Proposition~\ref{p:toprec} gives that $\ttau$ is recognizable, and we can conclude by Corollary~\ref{c:top}.
\end{proof}
\begin{proof}[Proof of Proposition~\ref{p:toprec}]
  In order to prove that $\tau$ is recognizable, since $\tau$ is assumed to be injective, it is enough to prove quasi-recognizability.

    Let $y$ be a configuration of $\tau(X)$ which has $x$ as desubstitution such that $x$ satisfies the properties in the statement. By shifting, we can assume that $y=\tau(x)$, so that the corresponding standard desubstitution scheme is $(i\len\tau)_{i\in\Z}$.
    Assume that it admits a distinct standard desubstitution scheme $\kk=(k_0+i\len\tau)_{i\in\Z}$, where $k_0<0$. We conclude according to the value of $k_0$:
    \begin{enumerate}
		\item \textbf{If} $\boldsymbol{-\len u\le k_0<0}$, then $y_{\co{k_0}{\len u}}\in u\A^{-k_0}\cap\A^{-k_0}u$, which means that $-k_0$ is a period for $u$, hence a multiple of $p$.
    Moreover, for every $i\in\Z$, $\tau(x_i)$ ends with
    \[\tau(x_i)_{\co{\len\tau+k_0}{\len\tau}}
    = y_{\co{(i+1)\len\tau+k_0}{(i+1)\len\tau}}
    = y_{\co{k_{i+1}}{k_{i+1}-k_0}}
    =u_{\co0{-k_0}}.\]
Since $-k_0$ is a multiple of $p$, and $p$ is a period of $u$, $u_{\co0{-k_0}}$ ends with $u_{\co0{p}}$. This implies that for all $i$, $\tau(x_i)$ ends with $u_{\co0{p}}$, which contradicts the second assumption in the statement.
    
    \item If $\boldsymbol{\len u-\len\tau\le k_0<-\len u}$. For every $i\in\Z$, $\tau(x_i)$ contains the word
    \[\tau(x_i)_{\co{\len\tau+k_0}{\len\tau+k_0+\len u}}
    =y_{\co{(i+1)\len\tau+k_0}{(i+1)\len\tau+k_0+\len u}}
    =y_{\co{k_{i+1}}{k_{i+1}+\len u}}
    =u.
    \]
    Since $\len\tau+k_0\ge\len u$ this other occurrence of $u$ is disjoint from the prefix $u$, which contradicts the third assumption.
    
    \item If $\boldsymbol{-\len\tau<k_0<\len u-\len\tau}$. Then $0<k_1<\len u$, and thus
    $\tau(x)_{\co0{k_1+\len u}}\in u\A^{k_1}\cap\A^{k_1}u$, which means that $k_1$ is a period for $u$, hence a multiple of $p$.
    For every $i\in\Z$, $\tau(x_i)_{\co{\len u}{\len\tau}}$ starts with
    \[\tau(x_i)_{\co{\len u}{\len u+k_1}}
    = y_{\co{i\len\tau+\len u}{i\len\tau+\len u+k_1}}
    = y_{\co{k_{i+1}-k_1+\len u}{k_{i+1}+\len u}}
    =u_{\co{\len u-k_1}{\len u}},\]
    which contradicts the first assumption, since $k_1$ is a period.

\end{enumerate}

    Let us now prove the second part of the statement. Assuming that $u$ is nonoverlapping, then its smallest period is $\len u$, so that the first and second hypotheses are actually implied by the third one.
    It is thus sufficient to prove it and then apply the first part of the statement. When $\len u>\len\tau/2$ it is straightforward. When $\len u=\len\tau/2$ on the other hand, 
for all $y$ aperiodic and $x$ a desubstitution of $y$, then $x$ is aperiodic as well and thus there 
 	exists $i\in\Z$ such that $\tau(x_i)\ne uu$. This $i$ satisfies the third hypothesis.
\end{proof}

The following extends the result of C.~Morales on existence for every $n \ge 2$ of systems which are positively $n$-expansive and not $(n-1)$-positively expansive, providing examples within minimal dynamical systems. 
\begin{proposition}
For every $n \ge 2$, there exists a minimal substitutive T\oe plitz shift over alphabet $\co0n$ which is positively $n$-expansive and not positively $(n-1)$-expansive.  
\end{proposition}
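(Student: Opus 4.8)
The plan is to construct, for each $n \ge 2$, an explicit single Tœplitz substitution $\tau$ over the alphabet $\co0n$ and take $\Omega_\tau$ as the desired minimal shift. The requirements are: (i) $\tau$ should be a Tœplitz substitution (left-proper, uniform, injective, expanding) so that the general theory applies; (ii) the directive sequence (here constant, equal to $\tau$) should be weakly primitive, so that Proposition~\ref{p:topmin} yields minimality; (iii) $\tau$ should be recognizable, so that Corollary~\ref{c:top} gives positive $n$-expansiveness; and (iv) $\tau$ should be right-marked, so that Corollary~\ref{c:marked} gives that $\Omega_\tau$ is \emph{not} positively $(n-1)$-expansive. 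The rank of a constant directive sequence equals $\card{\A_0} = n$, so once these four properties hold, Corollaries~\ref{c:marked} and~\ref{c:top} together deliver exactly the claim.

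\textbf{The concrete construction} I would try is a substitution of the form $\tau(j) = 0\,w_j\,j$ for each $j\in\co0n$, where the common prefix $0$ makes $\tau$ left-proper, the distinct last letters $j$ make it right-marked (the map $j \mapsto \tau(j)_{\len{\tau(j)}-1} = j$ is the identity, hence one-to-one), and the middle blocks $w_j$ are chosen of equal length so that $\tau$ is uniform. Distinctness of the images (guaranteed by the distinct last letters) gives injectivity, and expansiveness holds since $\len\tau \ge 2$ for $n\ge 2$. To ensure weak primitivity I would choose the $w_j$ so that every letter of $\co0n$ appears in $\tau(a)$ for every $a$ (for instance, by forcing each image to contain all symbols), which makes primitivity immediate with $t'=t+1$. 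For recognizability I would appeal to Proposition~\ref{p:toprec}: arranging the maximal common prefix $u$ of $\tau$ to be nonoverlapping and of length $\len u \ge \len\tau/2$ reduces recognizability (over aperiodic configurations) to a single condition, and aperiodicity of $\Omega_\tau$ follows from the first part of Proposition~\ref{p:topmin} since a nonoverlapping common prefix rules out the degenerate case $\tau(a_{t+1}) = a_t^{\len{\tau_t}}$.

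\textbf{The main obstacle} I anticipate is reconciling the length-$\ge \len\tau/2$ nonoverlapping common prefix requirement of Proposition~\ref{p:toprec} with the weak-primitivity requirement that every symbol appear in every image: a long forced common prefix eats up most of each image, leaving little room in the middle blocks $w_j$ to distribute all $n$ symbols, and these constraints pull in opposite directions as $n$ grows. I would resolve this either by allowing the common prefix to be a fixed nonoverlapping word (such as $0^{p}$ with a sentinel, or a de Bruijn-type block) of controlled length and enlarging $\len\tau$ accordingly, or by invoking the more flexible first part of Proposition~\ref{p:toprec} (the three explicit properties on occurrences of $u$) rather than its nonoverlapping corollary, checking those three properties directly for the specific images. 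Alternatively, one could drop the single-substitution ambition and use a short periodic directive sequence of Tœplitz substitutions, applying the relaxed hypotheses noted in the remark after Theorem~\ref{t:rkrad}.

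\textbf{Finally}, I would assemble the pieces: minimality from Proposition~\ref{p:topmin}(2) via weak primitivity; positive $n$-expansiveness from recognizability (Proposition~\ref{p:toprec}) through Corollary~\ref{c:top}; and failure of positive $(n-1)$-expansiveness from the right-marked property through Corollary~\ref{c:marked}, whose proof exhibits $n$ configurations agreeing on $\N$ but differing at position $-1$. Since $\Omega_\tau$ is the substitutive (hence S-adic) Tœplitz shift over $\co0n$, this is exactly a minimal substitutive Tœplitz shift with the stated expansiveness degrees.
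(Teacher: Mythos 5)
Your plan is essentially the paper's proof: a single T\oe plitz substitution of the form $\tau(k)=0\,w_k\,k$ over $\co0n$, with minimality from Proposition~\ref{p:topmin}, recognizability from Proposition~\ref{p:toprec}, positive $n$-expansiveness via the finite-rank corollaries, and failure of positive $(n-1)$-expansiveness from the right-marked property via Corollary~\ref{c:marked}. The only shortfall is that you never commit to a concrete $w_k$, and the ``main obstacle'' you devote a paragraph to is illusory: weak primitivity only asks that every letter of $\A_t$ appear somewhere in each image, and letters occurring \emph{inside the common prefix} count for this. So the prefix and the primitivity requirement do not compete for space; they can be satisfied by the same letters. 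Taking $w_k = 1\,2\cdots(n-1)$ for every $k$, i.e.\ $\tau(k)=0\,1\cdots(n-1)\,k$ (exactly the paper's choice, which lies inside your template), everything holds at once: the maximal common prefix $u=0\,1\cdots(n-1)$ contains all letters (weak primitivity, hence minimality), is nonoverlapping because its letters are pairwise distinct, and has length $n\ge\len\tau/2=(n+1)/2$; no image is a power of a single letter, so Proposition~\ref{p:topmin} gives aperiodicity and then Proposition~\ref{p:toprec} gives recognizability; and $k\mapsto\tau(k)_{\len{\tau(k)}-1}=k$ is the identity, so $\tau$ is right-marked and Corollary~\ref{c:marked} (rank $n$) concludes. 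With this instantiation your argument is complete and coincides with the paper's; your fallback routes (the three-condition version of Proposition~\ref{p:toprec}, or a periodic directive sequence) are unnecessary.
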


\begin{proof}
  Let us consider substitution $\tau$ over $\co0n$ defined by $\tau \colon k \mapsto 0 \ldots (n-1) k$ for each symbol $k$.
  This T\oe plitz substitution satisfies all the hypotheses from Propositions~\ref{p:topmin} and~\ref{p:toprec}, hence it is recognizable, everywhere-growing and $\Omega_\tau$ is minimal.
  Moreover, the substitution is also right-marked, hence Corollary~\ref{c:marked} implies that it is positively $n$-expansive {and not $(n-1)$-expansive}.
\end{proof}

\subsubsection{Right-quasi-recognizability radius growth}
Let us derive some additional applications from Theorem~\ref{t:rkrad}. 

When two substitutions $\tau$ and $\tau'$ can be composed, it is clear that $\len{\tau \tau'}\le\len\tau\len{\tau'}$.
It is natural to wonder how 
composition acts on the right-quasi-recognizability radius that we have defined above.

\begin{lemma}\label{l:comprad}
Let $\tilde\tau:\B\mapsto\mathcal C$ be a substitution with right-quasi-recognizability radius $\tilde R$ over $X$, and $\tau:\A\to\B$ be an injective substitution with right-quasi-recognizability radius $R$ over $\tilde\tau(X)$.
Then $\spart{\frac R{\len{\tilde\tau}}+\tilde R}$ is a right-quasi-recognizability radius for $\tau\tilde\tau$.
\end{lemma}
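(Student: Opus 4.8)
The plan is to desubstitute at two scales and to propagate the recognition uncertainty from one scale to the other. The starting observation is that any standard desubstitution scheme of a configuration $w$ for $\tau\tilde\tau$ factors through an intermediate one: writing $w=\tau(y)$, where $y$ is the $\tau$-desubstitution of $w$, and $y=\tilde\tau(x)$, where $x$ is the $\tilde\tau$-desubstitution of $y$, the coarse cut points (those of $\tau\tilde\tau$) form a subsequence of the intermediate $\tau$-cut points, namely those aligned with the $\tilde\tau$-block boundaries of $y$. Thus recognizing $\tau\tilde\tau$ from the right amounts to first recognizing $\tau$ in $w$ and then recognizing $\tilde\tau$ in the resulting intermediate configuration $y$.

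Concretely, I would take two configurations $w,w'\in\tilde\tau(X)$ with $w_{\N}=w'_{\N}$ and standard desubstitution schemes for $\tau\tilde\tau$, and decompose each as above into an intermediate $\tau$-scheme $\kk$ (resp. $\kk'$) and a coarse $\tilde\tau$-desubstitution $x$ (resp. $x'$), with intermediate configurations $y,y'\in X$. Applying the right-quasi-recognizability radius $R$ of $\tau$ over $\tilde\tau(X)$ to the pair $(w,w')$, there exist $j,j'\in\cc0R$ with $\kk_{\co j\infty}=\kk'_{\co{j'}\infty}$ and $w_{\co{k_j}\infty}=w'_{\co{k'_{j'}}\infty}$; since $\tau$ is injective, the intermediate letters agree as well, so $y$ and $y'$ share a common right-tail starting at the $j$-th (resp. $j'$-th) intermediate letter. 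After a shift (as in the proof of Proposition~\ref{p:recog}) this says that two configurations of $X$ agree on $\N$, so I can invoke the right-quasi-recognizability radius $\tilde R$ of $\tilde\tau$ over $X$: beyond $\tilde R$ coarse pre-image letters, their $\tilde\tau$-desubstitution schemes, and hence $x$ and $x'$, coincide.

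It then remains to convert the intermediate-scale uncertainty into coarse-scale uncertainty and add the two contributions. The $R$ intermediate pre-image letters left ambiguous by the first step span at most $\spart{R/\len{\tilde\tau}}$ coarse letters, because $\tilde\tau$ maps each coarse letter to an intermediate word of length at least $\len{\tilde\tau}$; Remark~\ref{r:rad} makes this precise by controlling the index shift between the two scales. Combining with the $\tilde R$ ambiguous coarse letters from the second step, the two coarse (that is, $\tau\tilde\tau$) cut-point sequences coincide beyond index $\spart{R/\len{\tilde\tau}}+\tilde R=\spart{R/\len{\tilde\tau}+\tilde R}$, the last equality using $\tilde R\in\N$, together with agreement of the configurations from that point on, which is exactly the defining property of the claimed radius. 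I expect the main obstacle to be the bookkeeping across the three scales: aligning the subsequence of coarse cut points inside the intermediate scheme, keeping track of standardness (where the index-$0$ cut point may be negative, as handled in Remark~\ref{r:rad}), and turning the heuristic ceiling/index conversion between scales into a rigorous inequality rather than an approximate count.
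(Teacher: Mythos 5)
Your proposal is correct and follows essentially the same route as the paper's proof: factor the coarse desubstitution scheme through the intermediate $\tau$-scheme, apply the radius $R$ of $\tau$ together with injectivity to transfer tail agreement to the intermediate configurations, shift to standard schemes (Remark~\ref{r:desubshift}) and apply the radius $\tilde R$ of $\tilde\tau$, then convert indices between the two scales via Remark~\ref{r:rad} to obtain $\spart{R/\len{\tilde\tau}}+\tilde R=\spart{R/\len{\tilde\tau}+\tilde R}$. The bookkeeping you flag as the main obstacle is exactly what the paper's proof carries out, in the way you describe.
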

\begin{proof}
Let $x,x'$ be two configurations with $x_{{ \N}}=x'_{{ \N}}$.
Any standard desubstitution scheme of $x$ for $\tau\tilde\tau$ can be written as $(k_{\tilde k_i})_{i\in\Z}$, where $\kk$ is a standard desubstitution scheme of $x$ for $\tau$, $y$ is a corresponding desubstitution, and $\tilde\kk$ is a standard desubstitution scheme of $y$ for $\tilde\tau$.
Similarly, any desubstitution scheme of $x'$ for $\tau\tilde\tau$ can be written as $(k'_{\tilde k'_i})_{i\in\Z}$.
Since $R$ is a right-quasi-recognizability radius for $\tau$, there exist $j,j'\le R$ such that $k_{\co j\infty}=k'_{\co{j'}\infty}$.
By injectivity of $\tau$, one has $\sigma^j(y)_{{ \N}}=\sigma^{j'}(y)_{{ \N}}$.
Following Remark~\ref{r:desubshift}, let us define $l\defeq\max_{\tilde k_i\le j}i\le\spart{\frac j{\len\tau}}$, and $l'\defeq\max_{\tilde k'_i\le j'}i\le\spart{\frac{j'}{\len\tau}}$, which make $\sigma^l\tilde k-j$ a standard desubstitution scheme for $\sigma^j(y)$ and $\sigma^{l'}\tilde k'-j'$ a standard desubstitution scheme for $\sigma^{j'}(y')$.
Since $\tilde R$ is a right-quasi-recognizability radius for $\tilde\tau$, there exist $\tilde j,\tilde j'\le\tilde R$ such that $\tilde k_{\co{l+\tilde j}\infty}-j=\tilde k'_{\co{l'+\tilde j'}\infty}-j'$.
For $n\in{ \N}$, we get that
\[k_{\tilde k_{l+\tilde j+n}}=k_{\tilde k'_{l'+\tilde j'+n}-j'+j}=k'_{\tilde k'_{l'+\tilde j'+n}}.\]
Since $l+\tilde j$ and $l'+\tilde j'$ are smaller than $\spart{\frac R{\len{\tilde\tau}}}+\tilde R$.
Moreover, $x_{\co{k_j}\infty}=x'_{\co{k'_{j'}}\infty}$, $l+\tilde j\ge j$, and $l'\tilde j'\ge j'$, so that we also have $x_{\co{k_{\tilde k_{l+\tilde j}}}\infty}=x'_{\co{k'_{\tilde k'_{l'+\tilde j'}}}\infty}$. We thus have proved that $\spart{\frac R{\len{\tilde\tau}}}+\tilde R$ is a right-recognizability radius for $\tau\tilde\tau$.
\end{proof}

\begin{corollary}~\label{c:serie}
Let $\ttau$ be an everywhere-growing directive sequence with finite rank $\rk\ge 1$ such that there exists $m\ge 1$ and $(R_i)_{i\in{ \N}}$ such that for every $i\in{ \N}$, $R_i$ is a right-quasirecognizable radius for $\tau_i$ over the shift $\Omega_{\sigma^i(\ttau)}$ with: 
\[\sum_{i<t}R_i\len{\tau_{\cc0i}}\le \len{\tau_{\co0t}}m.\]
Then $\Omega_\ttau$ is positively $\rk^{m+2}$-expansive.
\end{corollary}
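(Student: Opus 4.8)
The plan is to exhibit a single integer $R$ that serves as a right-quasi-recognizability radius for every $\tau_{\co0t}$ over $\Omega_\ttau$, and then to feed it into Theorem~\ref{t:rkrad}. I will aim for $R=m+1$, which yields positive $\rk^{(m+1)+1}=\rk^{m+2}$-expansiveness. The degenerate case $\rk=1$ reduces, exactly as in the proof of Theorem~\ref{t:rkrad}, to Remark~\ref{r:rk1}, so I would assume $\rk\ge 2$ from the outset.

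First I would build a radius $S_t$ for $\tau_{\co0t}$ by iterating Lemma~\ref{l:comprad} from the outside in. Writing $\tau_{\co0t}=\tau_{\co0{t-1}}\tau_{t-1}$, with the partial product $\tau_{\co0{t-1}}$ as the (injective) outer factor and $\tau_{t-1}$ as the inner one, the lemma gives the recursion $S_1=R_0$ and $S_t=\spart{S_{t-1}/\len{\tau_{t-1}}}+R_{t-1}$, each $S_t$ being valid over $\Omega_\ttau$ once one tracks the relevant sets ($\Omega_\ttau$ desubstitutes into $\Omega_{\sigma\ttau}$, and so on). Note that the natural "inside-out" decomposition (taking $\tau_j$ as outer and $\tau_{\co{j+1}t}$ as inner) would instead produce a sum of ceilings each of size at least $1$, which is unbounded in $t$; the outside-in order is what keeps the accumulation under control.

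Next I would unroll the recursion as $S_t=\sum_{j<t}R_j/P_{j}+\sum_{k}\eps_k/P_k$, where $P_j\defeq\prod_{i=j+1}^{t-1}\len{\tau_i}$ and $\eps_k\in[0,1)$ is the ceiling slack introduced at the $k$-th step. The key bookkeeping observation is that $\eps_k=0$ whenever $\len{\tau_k}=1$, since then the division is exact. For the main term, subadditivity (Remark~\ref{r:subad}) gives $\len{\tau_{\co0t}}\le\len{\tau_{\cc0j}}\,P_j$, whence $R_j/P_j\le R_j\len{\tau_{\cc0j}}/\len{\tau_{\co0t}}$, so the main term is at most $\big(\sum_{j<t}R_j\len{\tau_{\cc0j}}\big)/\len{\tau_{\co0t}}\le m$ by hypothesis. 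For the error term, only the expanding indices contribute; if they are $k_1<\cdots<k_s$, then $P_{k_l}\ge 2^{\,s-l}$, so the error is bounded by $\sum_{l=1}^{s}2^{-(s-l)}<2$. Hence $S_t<m+2$, i.e. $S_t\le m+1$, uniformly in $t$. Since any integer at least $S_t$ is again a radius, $R=m+1$ is a right-quasi-recognizability radius for every $\tau_{\co0t}$ over $\Omega_\ttau$, and Theorem~\ref{t:rkrad} concludes that $\Omega_\ttau$ is positively $\rk^{m+2}$-expansive.

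The step I expect to be the crux is the control of the accumulated ceiling errors: applying $\spart x<x+1$ blindly at each of the $t$ composition steps gives an error growing linearly in $t$ and no uniform radius, so one genuinely needs the twin facts that non-expanding steps contribute no slack and that the slacks of expanding steps are damped geometrically by the subsequent divisions. A secondary technical point is to check that the composed radii really hold over $\Omega_\ttau$ (not merely over the full shift) and that the partial products $\tau_{\co0{t-1}}$ are injective, so that the outer-factor hypothesis of Lemma~\ref{l:comprad} is met at each step.
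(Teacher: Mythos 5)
Your proof is correct and follows the same skeleton as the paper's: iterate Lemma~\ref{l:comprad} to obtain a uniform right-quasi-recognizability radius for all the $\tau_{\co0t}$, bound it using the summability hypothesis together with Remark~\ref{r:subad}, and conclude via Theorem~\ref{t:rkrad}. Where you differ is in the execution of the induction, and your version is the more careful one. The paper simply asserts that $\spart{\sum_{i<t}R_i/\len{\tau_{\oo it}}}$ is a radius for $\tau_{\co0t}$; but iterating the lemma literally yields your recursion $S_t=\spart{S_{t-1}/\len{\tau_{t-1}}}+R_{t-1}$, whose unrolled form has the products $\prod_{i=j+1}^{t-1}\len{\tau_i}$ (rather than the composition lengths $\len{\tau_{\oo jt}}$) in the denominators, plus the ceiling slacks you track. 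Your two observations --- no slack at non-expanding steps, geometric damping of the slacks created at expanding steps --- are exactly what keeps the radius uniformly bounded by $m+1$; this is the point the paper's one-line induction glosses over, and it also accounts for the exponent $m+2$ in the statement (the paper's computation, taken at face value, produces radius $m$ and hence $\rk^{m+1}$-expansiveness). Two caveats, both shared with the paper rather than specific to your write-up: the main-term estimate rests on Remark~\ref{r:subad}, which you need in the iterated form $\len{\tau_{\co0t}}\le\len{\tau_{\cc0j}}\prod_{i=j+1}^{t-1}\len{\tau_i}$ (a slightly stronger demand than the paper's single application, since the product of minimal lengths can be smaller than $\len{\tau_{\oo jt}}$); and Lemma~\ref{l:comprad} requires the outer substitution to be injective, which is not among the hypotheses of the corollary --- you rightly flag this, but, like the paper, you cannot discharge it from the stated assumptions.
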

\begin{proof}
By applying Lemma~\ref{l:comprad} inductively, for every $t$, $\spart{\sum_{i<t}\frac{R_i}{\len{\tau_{\oo it}}}}$ is a right-quasi-recognizability radius for $\tau_{\co0t}$. Since for every $i,t$, $\len{\tau_{\co0t}}\le\len{\tau_{\cc0i}}\len{\tau_{\oo it}}$, we have: 
\[\spart{\sum_{i<t}\frac{R_i}{\len{\tau_{\oo it}}}}
    \le\spart{\frac1{\len{\tau_{\co0t}}}\sum_{i<t}R_i\len{\tau_{\cc0i}}}
    \le m.\] 
Thus $m$ is a right-quasi-recognizability radius for $\tau_{\co0t}$ for every $t$. The statement follows from Theorem~\ref{t:rkrad}.
\end{proof}

\begin{corollary}\label{c:bounded}
Let $\ttau$ be a directive sequence and $\rho>1$ such that $\len{\tau_{\co0t}}\sim\rho^t$ and there exists $R$ which is a right-quasi-recognizability radius of $\tau_t$ over $\Omega_{\sigma^t(\ttau)}$, for every $t\in{ \N}$.
Then $\Omega_\ttau$ is finitely positively expansive.
\end{corollary}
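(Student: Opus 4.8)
The plan is to recognize the statement as a direct application of Corollary~\ref{c:serie}, whose cumulative-sum hypothesis is forced to hold precisely by the geometric growth rate $\len{\tau_{\co0t}}\sim\rho^t$. First I would check that $\ttau$ is everywhere-growing: since $\rho>1$, the equivalence $\len{\tau_{\co0t}}\sim\rho^t$ gives $\len{\tau_{\co0t}}\to\infty$, which is exactly the everywhere-growing property. (Invoking Corollary~\ref{c:serie} also needs finite rank $\rk\ge1$; I take this to be part of the standing hypotheses on the directive sequences considered, as in the preceding corollaries.)

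Next, since a single $R$ serves as a right-quasi-recognizability radius of every $\tau_t$ over $\Omega_{\sigma^t(\ttau)}$, I would set $R_i\defeq R$ for all $i\in\N$ and verify the key inequality required by Corollary~\ref{c:serie}: the existence of $m\ge1$ with $\sum_{i<t}R\,\len{\tau_{\cc0i}}\le m\,\len{\tau_{\co0t}}$ for every $t$. The content here is a geometric-series estimate. From $\len{\tau_{\co0s}}\sim\rho^s$ I extract constants $0<c_1\le c_2$ with $c_1\rho^s\le\len{\tau_{\co0s}}\le c_2\rho^s$ for every $s\in\N$, the finitely many small indices being absorbed into $c_1,c_2$. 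Using $\len{\tau_{\cc0i}}=\len{\tau_{\co0{i+1}}}$, one then computes
\[\frac{\sum_{i<t}R\,\len{\tau_{\cc0i}}}{\len{\tau_{\co0t}}}=R\,\frac{\sum_{j=1}^{t}\len{\tau_{\co0 j}}}{\len{\tau_{\co0t}}}\le\frac{Rc_2\sum_{j=1}^{t}\rho^j}{c_1\rho^t}\le\frac{Rc_2\rho}{c_1(\rho-1)},\]
a bound independent of $t$. Taking $m$ to be any integer upper bound of this constant that is at least $1$ yields the hypothesis of Corollary~\ref{c:serie} for every $t$.

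Finally I would invoke Corollary~\ref{c:serie}, which gives that $\Omega_\ttau$ is positively $\rk^{m+2}$-expansive, and in particular finitely positively expansive, as claimed.

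The argument has essentially no hard obstacle: the only real content is that a geometric growth rate makes the cumulative sum $\sum_{j=1}^{t}\len{\tau_{\co0 j}}$ comparable, up to a multiplicative constant depending only on $\rho$, to its last term $\len{\tau_{\co0t}}$ — exactly the comparison Corollary~\ref{c:serie} demands. The one delicate point to handle carefully is the uniformity in $t$: the relation $\sim$ only controls large $t$, so I must ensure the finitely many small values do not spoil the estimate, which is precisely why I choose $c_1,c_2$ valid for all indices, allowing $m$ to be fixed once and for all.
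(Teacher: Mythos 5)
Your proposal is correct and follows essentially the same route as the paper: both reduce the statement to Corollary~\ref{c:serie} by using the geometric growth $\len{\tau_{\co0t}}\sim\rho^t$ to bound the cumulative sum $\sum_{i<t}R\len{\tau_{\cc0i}}$ by a constant multiple of $\len{\tau_{\co0t}}$ via a geometric series. If anything, you are slightly more careful than the paper, which silently elides the index shift $\len{\tau_{\cc0i}}=\len{\tau_{\co0{i+1}}}$ and the finite-rank hypothesis that Corollary~\ref{c:serie} requires.
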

\begin{proof}
From the hypotheses, we have that there exists $\beta>0$ such that 
\[\sum_{i<t}{R}\frac{\len{\tau_{\co0i}}}{\len{\tau_{\co0t}}}\le\sum_{i<t}R\beta\rho^{i-t}=\frac{\beta R(1-\rho^{-t})}{\rho-1} < \frac{\beta R}{\rho-1}.\]
We conclude by Corollary~\ref{c:serie}.
\end{proof}

{
The second hypothesis of Corollary~\ref{c:bounded} is probably difficult to check because it involves both the different substitutions and the different partial limit sets $\Omega_{\sigma^t\ttau}$. 
It is quite natural though (and sometimes even assumed in the definition of S-adic) to consider only finitely many substitutions.

A directive sequence is \textbf{finitary} if it involves only finitely many distinct substitutions.
\begin{corollary}\label{c:finitary}
  Let $\ttau$ be a finitary directive sequence of substitutions such that $\len{\tau_{\co0t}}$ grows exponentially.
  If either all of these substitutions are quasi-recognizable, or $\Omega_\ttau$ is aperiodic, then $\Omega_\ttau$ is finitely positively expansive.
\end{corollary}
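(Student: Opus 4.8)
The plan is to reduce the statement to Corollary~\ref{c:bounded}. Since $\ttau$ is finitary, only finitely many substitutions occur, so the alphabets $\A_t$ range over a finite family; in particular $t\mapsto\card{\A_t}$ is bounded, the rank $\rk$ is finite, and the maximal lengths $\norm{\tau_t}$ are uniformly bounded. The hypothesis that $\len{\tau_{\co0t}}$ grows exponentially provides $\rho>1$ and constants $0<c\le C$ with $c\rho^t\le\len{\tau_{\co0t}}\le C\rho^t$, which is exactly the length condition needed in the proof of Corollary~\ref{c:bounded}: the upper bound is what keeps $\sum_{i<t}\len{\tau_{\co0i}}/\len{\tau_{\co0t}}$ bounded, through $\len{\tau_{\co0i}}/\len{\tau_{\co0t}}\le (C/c)\rho^{i-t}$. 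Everything thus reduces to exhibiting a single integer $R$ that is simultaneously a right-quasi-recognizability radius of $\tau_t$ over $\Omega_{\sigma^t\ttau}$ for every $t\in\N$.

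In the first case, assume all the occurring substitutions are quasi-recognizable (over the full shift). As there are only finitely many of them, Proposition~\ref{p:recog} yields for each one a right-quasi-recognizability radius over the full shift; let $R$ be their maximum. Since a right-quasi-recognizability radius over the full shift is in particular one over any subshift (as observed just after Theorem~\ref{t:rkrad}), this $R$ is a right-quasi-recognizability radius of $\tau_t$ over $\Omega_{\sigma^t\ttau}$ for every $t$, and Corollary~\ref{c:bounded} applies directly.

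In the second case, assume $\Omega_\ttau$ is aperiodic. First I would observe that aperiodicity descends to every level: since $\tau_{\co0t}(\Omega_{\sigma^t\ttau})\subset\Omega_\ttau$ and the image of a periodic configuration under a substitution is again periodic, each $\Omega_{\sigma^t\ttau}$ is aperiodic. Next I would invoke the recognizability theorem for aperiodic points of everywhere-growing directive sequences from \cite{BST} (generalizing Moss\'e \cite{mosse}): it gives that each $\tau_t$ is quasi-recognizable over $\Omega_{\sigma^t\ttau}$, hence admits, by Proposition~\ref{p:recog}, some finite radius $R_t$. It then remains to bound the $R_t$ uniformly in $t$, and this is the step I expect to be the main obstacle.

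The natural approach to uniformity is a compactness argument: the shifted sequences $\sigma^t\ttau$ all live in the compact space of directive sequences valued in the finite pool of occurring substitutions, so if $\sup_t R_t=\infty$, one extracts a subsequence along which the top substitution $\tau_{t_n}$ is constant and $\sigma^{t_n}\ttau$ converges to some $\ttau^\ast$, producing two distinct standard desubstitution schemes of a single configuration and contradicting quasi-recognizability at the limit. The delicate point is that aperiodicity need not be inherited by the limit sequence $\ttau^\ast$; to get around this I would use that the failure of quasi-recognizability at the limit already manifests at finite range, so that the non-uniqueness of the desubstitution can be transported back to configurations of the original aperiodic sets $\Omega_{\sigma^{t_n}\ttau}$, where it is forbidden. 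Once $R\defeq\sup_t R_t<\infty$ is secured, Corollary~\ref{c:bounded} finishes the proof exactly as in the first case.
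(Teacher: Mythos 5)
Your reduction to Corollary~\ref{c:bounded} and your treatment of the first case (all substitutions quasi-recognizable) are correct and essentially identical to the paper's argument: finitely many distinct substitutions, Proposition~\ref{p:recog} over the (compact) full shift, take the maximum radius, restrict it to each $\Omega_{\sigma^t\ttau}$.

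The second case is where your proposal genuinely diverges from the paper, and it contains a real gap --- the one you yourself flag. You obtain recognizability of $\tau_t$ \emph{over the subshift} $\Omega_{\sigma^t\ttau}$, so the radius $R_t$ produced by Proposition~\ref{p:recog} comes from a compactness argument on $\Omega_{\sigma^t\ttau}$ and therefore depends on $t$, even when $\tau_t$ is the same substitution at different levels. Your attempted repair by compactness over directive sequences does not close: first, the limit pair of configurations is a limit of points taken from \emph{varying} subshifts $\Omega_{\sigma^{t_n}\ttau}$, and aperiodicity of each of these subshifts says nothing about the limit point, which may well be periodic (and need not lie in any of the $\Omega_{\sigma^{t_n}\ttau}$, nor in an aperiodic shift at all); second, the claim that ``failure of quasi-recognizability at the limit already manifests at finite range'' is unjustified --- quasi-recognizability is a statement about uniqueness of \emph{infinite} desubstitution schemes, not a finite-range condition, and what happens at stage $t_n$ is not a failure of quasi-recognizability (which holds there) but merely a large merging index, so there is nothing contradictory to transport back to the sets $\Omega_{\sigma^{t_n}\ttau}$.

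The paper avoids this uniformity problem entirely by invoking a stronger, per-substitution fact: by \cite{BST,beal_recognizability_2021}, every substitution is recognizable over the set of \emph{all} aperiodic configurations. This is a property of the substitution itself, not of a particular subshift, so the associated radius does not depend on $t$; combined with your (correct) observation that aperiodicity of $\Omega_\ttau$ descends to every $\Omega_{\sigma^t\ttau}$, and with finitariness to take a maximum over the finitely many distinct substitutions, this yields the common radius needed for Corollary~\ref{c:bounded}. If you replace your compactness-over-sequences step by this citation, your proof becomes the paper's proof; as written, the uniform bound $\sup_t R_t<\infty$ is not established.
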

\begin{proof}
  The first statement comes from Corollary~\ref{c:bounded} and the existence of a common right-quasi-recognizability to any finite set of quasi-recognizable substitutions, by Proposition~\ref{p:recog} and taking the maximum.
  The second statement comes additionnally from the fact that all substitutions are recognizable over aperiodic configurations, by \cite{BST,beal_recognizability_2021}.
\end{proof}
}
\begin{remark}
  The assumptions of the last corollaries are quite natural, because many substitutions are recognizable (see \cite{mosse,BST}), and frequently assumed properties of finitary directive sequences, imply exponential growth.
  For instance, \textbf{strong primitivity}: 
    \[\exists j\in{ \N},\forall t\in{ \N},\forall a\in\A_{t+j}, {\forall b\in\A_{t}},b\text{ appears in }\tau_{\co t{t+j}}(a).\]
\end{remark}

\begin{remark}
    It is to be noted that every everywhere-growing S-adic shift is eventually recognizable, in the sense that $\sigma^t\ttau$ is recognizable for some $t\in{ \N}$ \cite[Theorem~5.2]{BST}.
From this it can be deduced that they are all factors of recognizable S-adic shifts \cite{donoso}.
But the transient part (or the factor map) could in theory yield infinitely many left tails for the same configuration right half.
In the case of substitutive shifts, though, it is rather easy to derive finite positive expansiveness.
\end{remark}

\begin{corollary}\label{c:subst}
    Let $\ttau$ be a preperiodic everywhere-growing directive sequence 
    such that the shift $\Omega_\ttau$ is aperiodic. 
    Then $\Omega_\ttau$ is finitely positively expansive.
\end{corollary}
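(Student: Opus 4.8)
The plan is to reduce the statement to Corollary~\ref{c:finitary}, whose second alternative covers exactly aperiodic finitary directive sequences whose lengths $\len{\tau_{\co0t}}$ grow exponentially. Since $\ttau$ is preperiodic, it is eventually periodic: there exist $t_0\in\N$ and a period $p\ge 1$ such that $\tau_{t+p}=\tau_t$ (with matching alphabets $\A_{t+p}=\A_t$) for every $t\ge t_0$. In particular only finitely many distinct substitutions occur, so $\ttau$ is finitary, and the aperiodicity of $\Omega_\ttau$ is part of the hypotheses. It therefore remains only to verify that $\len{\tau_{\co0t}}$ grows exponentially; once this is done, Corollary~\ref{c:finitary} concludes immediately.

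First I would record the supermultiplicativity of the minimal length: whenever $\tau,\tau'$ are composable, $\len{\tau\tau'}\ge\len\tau\len{\tau'}$, because each image $\tau\tau'(c)$ is a concatenation of $\len{\tau'(c)}\ge\len{\tau'}$ images under $\tau$, each of length at least $\len\tau$. Setting $\rho\defeq\tau_{\co{t_0}{t_0+p}}$, which by periodicity is a self-substitution on $\A_{t_0}$ satisfying $\tau_{\co{t_0}{t_0+kp}}=\rho^k$, supermultiplicativity gives $\len{\rho^{k+\ell}}\ge\len{\rho^k}\len{\rho^\ell}$; in particular $k\mapsto\len{\rho^k}$ is nondecreasing, since $\len\rho\ge1$.

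The key step is to turn the everywhere-growing hypothesis into genuine exponential growth. By Remark~\ref{r:subad}, everywhere-growing yields $\lim_{t\to\infty}\len{\tau_{\co{t_0}t}}=\infty$; restricting to the subsequence $t=t_0+kp$ gives $\len{\rho^k}\to\infty$. Hence there is some $k_0$ with $\len{\rho^{k_0}}\ge 2$, and iterating supermultiplicativity gives $\len{\rho^{k_0 j}}\ge\len{\rho^{k_0}}^j\ge 2^j$; combined with monotonicity this yields $\len{\rho^k}\ge 2^{\ipart{k/k_0}}$ for every $k$. Finally, for $t=t_0+kp+r$ with $0\le r<p$, supermultiplicativity once more gives $\len{\tau_{\co0t}}\ge\len{\tau_{\co0{t_0}}}\len{\rho^k}\ge\len{\rho^k}\ge 2^{\ipart{k/k_0}}$, so $\len{\tau_{\co0t}}$ is bounded below by a fixed exponential in $t$, absorbing the bounded transient of length $t_0$.

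With exponential growth established and $\Omega_\ttau$ aperiodic, the second alternative of Corollary~\ref{c:finitary} applies and gives that $\Omega_\ttau$ is finitely positively expansive. The main obstacle is the third paragraph: one must check that everywhere-growing really forces $\len{\rho^k}\to\infty$ along the periodic subsequence (rather than merely along the full, possibly transient-distorted, index set), and that the Fekete-type doubling argument correctly upgrades a single value $\len{\rho^{k_0}}\ge 2$ into exponential growth of the entire sequence $\len{\tau_{\co0t}}$.
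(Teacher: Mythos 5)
Your proposal is correct, and it is a genuinely different packaging of the argument than the paper's. The paper proves the corollary directly: it cites \cite{pytheas} for the fact that a preperiodic everywhere-growing sequence satisfies $\len{\tau_{\co0t}}\sim\rho^t$ with $\rho>1$, cites \cite[Theorem~5.3]{BST} for recognizability under aperiodicity of $\Omega_\ttau$, gets a \emph{bounded} right-quasi-recognizability radius from Lemma~\ref{l:comprad} together with the fact that preperiodicity leaves only finitely many distinct limit sets $\Omega_{\sigma^t\ttau}$, and concludes by Corollary~\ref{c:bounded}. You instead observe that preperiodic implies finitary and route everything through the second alternative of Corollary~\ref{c:finitary}. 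Since that corollary's own proof rests on Corollary~\ref{c:bounded} and the recognizability results of \cite{BST}, the two arguments ultimately stand on the same pillars; what your decomposition buys is that the only ingredient left to verify is exponential growth, and your Fekete-type argument for it (supermultiplicativity of $\len\cdot$, Remark~\ref{r:subad} applied at $t'=t_0$, doubling along the powers $\tau_{\co{t_0}{t_0+kp}}$, then absorbing the transient) is correct and makes self-contained precisely the step the paper only cites from \cite{pytheas}. That is a genuine plus.

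One caveat deserves mention. What you establish is an exponential \emph{lower} bound $\len{\tau_{\co0t}}\ge c\mu^t$ with $\mu>1$. Corollary~\ref{c:finitary} is stated with the loose hypothesis that $\len{\tau_{\co0t}}$ ``grows exponentially'', but its proof passes through Corollary~\ref{c:bounded}, whose hypothesis $\len{\tau_{\co0t}}\sim\rho^t$ is really used to make the ratios $\len{\tau_{\co0i}}/\len{\tau_{\co0t}}$ decay geometrically in $t-i$; a one-sided lower bound alone does not yield this for an arbitrary finitary sequence, so read literally your appeal to Corollary~\ref{c:finitary} leans on an ambiguity in its statement. The gap is harmless in your setting, because the same supermultiplicativity trick closes it: for $t_0\le i<t$ one has
\[
\frac{\len{\tau_{\co0i}}}{\len{\tau_{\co0t}}}\le\frac1{\len{\tau_{\co it}}}\le C\mu^{-(t-i)}
\]
uniformly in $i$, since $\tau_{\co it}$ contains at least $\ipart{(t-i)/p}-1$ full periodic blocks, and the finitely many terms with $i<t_0$ are controlled by $\len{\tau_{\co0t}}\ge c\mu^t$ alone. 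This is exactly the summability condition that Corollaries~\ref{c:serie} and~\ref{c:bounded} require. Adding those two lines would make your reduction airtight without changing its structure.
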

\begin{proof}
    It is rather straightforward (see for instance \cite{pytheas}) that, if a preperiodic directive sequence $\ttau$ is everywhere-growing, then $\len{\tau_{\co0t}}\sim\rho^t$ for some $\rho>1$.
    It is also known from \cite[Theorem~5.3]{BST} that it is recognizable provided that $\Omega_\ttau$ is aperiodic; there is a bounded radius thanks to Lemma~\ref{l:comprad} and the finite number of distinct limit sets $\Omega_{\sigma^t\ttau}$ (thanks to preperiodicity of the sequence $\ttau$).
    The statement follows from Corollary~\ref{c:bounded}.
\end{proof}

\begin{remark}
%
%
Most of our results assume non-erasingness and, in this subsection, injectivity.
However one can inductively modify the alphabets so as to get these properties.
Indeed, if $\tau_0(a)$ is the empty word, then one can consider $\A_1'\defeq\A_1\setminus\{a\}$, over which the behavior of $\tau$ completely describes the configuration image set, and adapt $\tau_1$ (removing occurrences of $a$). Similarly, if $\tau_0(a)=\tau_0(b)$, then one can consider $\A_1'\defeq\A_1\setminus\{b\}$, and adapt $\tau_1$ (replacing occurrences of $b$ by $a$).
Doing this inductively for every $t\in{ \N}$, one gets a directive sequence with the same limit set and a non-greater rank (since alphabets are not increased in the process).
This may change ultimate periodicity or computability aspects, but this is not a problem for our results.
\end{remark}

\section{Comments}

Several of our results use the hypothesis of finite rank and one may wonder if ultimately this hypothesis is sufficient 
to have finite positive expansiveness. In the general context of zero-dimensional dynamical systems this is not the case, 
because any odometer has finite topological rank but is not finitely positively expansive. One possible question would be: which constraints, together with finite topological rank, imply finite positive expansiveness. \bigskip





How complex can finitely positively expansive dynamical systems get? 
We already know that the entropy is zero when the function is a homeomorphism.
In particular, the entropy of finitely positively expansive $\Z$-shifts is always zero.
However for the examples considered, we observed that the 
complexity function is always bounded from above by a linear function $n \mapsto an+b$. Is it always the case? 
A possible direction towards this question would be to consider other topological invariants such as entropy dimension. \bigskip 


\subsection{Higher-dimensional shifts}
Let us briefly sketch some comments and questions about higher-dimensional shifts.
For more formal terminology about directional dynamics, one can refer to \cite{det}.

The subaction of a $\Z^d$-shift $X$ in some direction $\theta$ is positively $n$-expansive if every sufficiently wide half-infinite stripe in direction $\theta$ admits at most $n$ valid extensions (this definition remains relevant for irrational directions).
It is known, in this setting, that every $\Z^d$-shift which is expansive in every direction, or positively expansive in one direction, is finite (see for instance \cite{boylelind,det}).

Nevertheless, if $X$ is a shift of finite type (that is, has the $d$-dimensional shadowing property), its subactions need not have the shadowing property, so that Theorem~\ref{theorem.shadowing} does not apply.
This naturally leads to the following question:
  Does there exist $n\in\N$ and an infinite $\Z^2$-shift of finite type which is $n$-expansive in every direction, or positively $n$-expansive in some direction?
The corresponding question is already answered for sofic $\Z^2$-shifts, {as seen in the following example}, contradicting a naive generalization of Corollary~\ref{cor:sofic_exp}.
\begin{example}
  There exists an infinite sofic $\Z^2$-shift which is positively expansive in every direction, except $0$ and $\pi$, in which it is positively $2$-expansive.
  Indeed, consider any infinite effective positively $2$-expansive $\Z$-shift $X$ (for instance the Thue-Morse shift).
  Then the shift of all those configurations such that all lines are equal, and every line is in $X$ is sofic, thanks to \cite{effpsd,Aubrun2013}.
\end{example}

A natural related directional property is the following (note that each half-plane corresponds to a positive cone for some total group order of $\Z^2$, than can generalize what $\N$ is for $\Z$): a $\Z^d$-shift is weakly positively $n$-expansive in direction $\theta$ if every half-plane in direction $\theta$ admits at most $n$ valid extensions.
One can now ask,
  given a shift (of finite type), what are the constraints on possible maps $\theta\mapsto n$, where $n$ is the minimal integer such that the shift is positively $n$-expansive\resp{weakly} in direction $\theta$.

The minimal substitutive subshifts from
\cite{minimalRobinson,ollinger}
are good candidates to be looked into for these questions.

\section*{Acknowledgements}
P. Oprocha was supported by National Science Centre, Poland (NCN), grant no. 2019/35/B/ST1/02239.


\bibliographystyle{alpha}
\bibliography{posnexp}

\end{document}